\definecolor{darkgreen}{RGB}{18, 80, 10}
\definecolor{gold}{rgb}{0.85,.66,0}
\title{On Rees algebras of ideals and modules with\\ weak residual conditions}
\author{Alessandra Costantini}
\address{Department of Mathematics, Tulane University, Gibson Hall, New Orleans LA 70118}
\email{acostantini@tulane.edu}
\author{Edward F. Price III}
\address{Department of Mathematics and Computer Science, Colorado College, Tutt Science Center, Colorado Springs \indent CO 80903}
\email{eprice@coloradocollege.edu}
\author{Matthew Weaver}
\address{School of Mathematical and Statistical Sciences, Arizona State University, Wexler Hall, Tempe AZ 85281}
\email{matthew.j.weaver@asu.edu}
\date{}	
\newtheorem{thmx}{Theorem}
\newtheorem{thm}{Theorem}[section]
\newtheorem{prop}[thm]{Proposition}
\newtheorem{lemma}[thm]{Lemma}
\newtheorem{cor}[thm]{Corollary}
\numberwithin{equation}{section}
\theoremstyle{definition}
\newtheorem{rem}[thm]{Remark}
\newtheorem{set}[thm]{Setting}
\newtheorem{notat}[thm]{Notation}
\newtheorem{defn}[thm]{Definition}
\newtheorem{quest}[thm]{Question}
\newtheorem{ex}[thm]{Example}
\DeclareRobustCommand\longtwoheadrightarrow{\relbar\joinrel\twoheadrightarrow}
\def\a{\mathfrak{a}}
\def\A{\mathcal{A}}
\def\J{\mathcal{J}}
\def\K{\mathcal{K}}
\def\L{\mathcal{L}}
\def\m{\mathfrak{m}}
\def\p{\mathfrak{p}}
\def\q{\mathfrak{q}}
\def\R{\mathcal{R}}
\def\S{\mathcal{S}}
\def\F{\mathcal{F}}
\def\D{\mathcal{D}}
\def\ann{\mathop{\rm ann}}
\def\ass{\mathop{\rm Ass}}
\def\MinPrimes{\mathop{\rm Min}} 
\def\dim{\mathop{\rm dim}}
\def\fitt{\mathop{\rm Fitt}}
\def\hgt{\mathop{\rm ht}}
\def\spec{\mathop{\rm Spec}}
\def\rank{\mathop{\rm rank}}
\def\bideg{\mathop{\rm bideg}}
\def\ker{\mathop{\rm ker}}
\def\coker{\mathop{\rm coker}}
\begin{document}

\begin{abstract}
Let $E$ be a module of projective dimension one over $R=k[x_1,\ldots,x_d]$. If $E$ is presented by a matrix $\varphi$ with linear entries and the number of generators of $E$ is bounded locally up to codimension $d-1$, the Rees ring $\R(E)$ is well understood. In this paper, we study $\R(E)$ when this generation condition holds only up to codimension $s-1$, for some $s<d$. Moreover, we provide a generating set for the ideal defining this algebra by employing a method of successive approximations of the Rees ring. Although we employ techniques regarding Rees rings of modules, our findings recover and extend known results for Rees algebras of perfect ideals with grade two in the case that $\rank E=1$.
\end{abstract}

\maketitle

 
\section{Introduction}

For $I=(f_1,\ldots,f_n)$ an ideal in a Noetherian ring $R$, the Rees ring of $I$ is the subring $\R(I) = R[It]= R[f_1t,\ldots, f_n t]$ of the polynomial ring $R[t]$, for $t$ an indeterminate. As a graded algebra, one also has that $\R(I) = R\oplus It \oplus I^2t^2\oplus\cdots$. From this latter description, one sees that $\R(I)$ carries information on every power $I^j$ and their asymptotic behavior for large exponents $j \gg 0$. As such, the Rees ring and its associated algebras have proven to be indispensable within the study of reductions and various multiplicities. In the geometric setting, the Rees ring $\R(I)$ is often called the \textit{blowup algebra}, as $\mathrm{Proj}(\R(I))$ is precisely the blowup of $\spec(R)$ along the subscheme $V(I)$. Alternatively, if $R=k[x_1,\ldots,x_d]$ and the polynomials $f_1,\ldots,f_n$ are homogeneous of a common degree, the Rees algebra serves as the coordinate ring of $\mathrm{graph}(\Phi)$ where $\Phi:\, \mathbb{P}_k^{d-1} \dashrightarrow \mathbb{P}_k^{n-1}$ is the rational map defined by $f_1,\ldots,f_n$. We may also extend this notion to Rees algebras of \textit{modules} in order to treat the case of repeated or successive blowups, or also compositions of rational maps between projective varieties. Indeed, the successive blowup of an affine scheme along disjoint subschemes $V(I)$ and $V(J)$ corresponds to the Rees ring $\R(I\oplus J)$. 

For an $R$-module $E$, the Rees ring $\R(E)$ is defined as $\R(E) = \S(E)/\tau(\S(E))$ where $\S(E)$ is the symmetric algebra of $E$ and $\tau(\S(E))$ is its $R$-torsion submodule. Although seemingly different, this recovers the previous notion when $E$ is actually an $R$-ideal. Since identifying the torsion of $\S(E)$ is seldom a simple task, it is typically more advantageous to describe $\R(E)$ as a quotient of a polynomial ring $R[T_1,\ldots,T_n]$. The ideal $\J$ defining this quotient is aptly called the \textit{defining ideal} of $\R(E)$ and its generators are called the \textit{defining equations} of $\R(E)$.

In general, determining the equations of $\J$ is an arduous task, however there has been notable success for Rees rings of perfect ideals with small codimension \cite{BM16,CPW,DRS18,KPU11,KPU17,Morey96,MU96,Nguyen14,Nguyen17,Weaver23,Weaver24} and Rees algebras of modules with small projective dimension \cite{Costantini,CPW,Madsen,SUV03,Weaver23,Weaver24Preprint} in a multitude of settings. Many of these results share common assumptions, most notably the \textit{residual condition} $G_d$, where $d=\dim R$. This condition was introduced by Artin and Nagata in \cite{AN72}, to study residual intersections of an ideal (see \Cref{sec:preliminaries}). For $E$ a module with rank $e$, one says that $E$ satisfies the condition $G_s$ if $\mu(E_{\p}) \leq \dim R_{\p} +e -1$ for every $\p \in \spec(R)$ with $1\leq \dim R_{\p} \leq s-1$. Additionally, $E$ satisfies $G_\infty$ if $E$ satisfies $G_s$ for all $s$. Here $\mu(E_{\p})$ denotes the minimal number of generators of $E_\p$. 

Although seemingly unassuming, the condition $G_d$ where $d=\dim R$ is quite powerful within the study of Rees algebras, as it often dictates the prime ideals of $R$ at which $\R(E)$ and $\S(E)$ coincide locally. As such, this assumption can seldom be weakened when comparing these two algebras. However, there are many classes of ideals and modules that do not satisfy this condition, yet have notable Rees rings. For example, let $\Omega_{R/k}$ denote the module of K\"ahler differentials of a complete intersection ring $R=k[x_1,\ldots,x_n]/I$ with $\dim R =d \geq 2$. The Rees algebra $\R(\Omega_{R/k})$ and its related rings are called \textit{tangent algebras} due to their connections with tangential varieties arising in algebraic geometry, and have been well studied \cite{SUV12}. However, the module $\Omega_{R/k}$ often fails to satisfy $G_d$, for instance if $R$ is not a normal ring \cite[Sec. 6]{Weaver24Preprint}. Additionally, classes of examples of grade-two perfect ideals that do not satisfy $G_d$ can easily be found, for instance in \cite{CDG22,DRS18}.

While the condition $G_d$ is essential for most arguments, there has been success in determining the defining ideal of Rees rings of perfect ideals with grade two satisfying $G_{d-1}$ \cite{DRS18,Nguyen14,Nguyen17}. Much of this work was extended more generally to modules of projective dimension one satisfying $G_{d-1}$ in our recent paper \cite{CPW}. In each of these instances, the weaker residual condition is supplemented with a strict rank condition on a presentation matrix $\varphi$, which appears difficult to relax. The objective of this paper is to further extend this previous work to modules of projective dimension one satisfying $G_{s}$ for \textit{any} $s<d$. Our main results \Cref{Defining Ideal Column Case} and \Cref{Defining Ideal Row Case} are summarized as follows. 

\begin{thmx}\label{Intro theorem - Defining Ideal of R(E)}
Let $R=k[x_1,\ldots,x_d]$ be a polynomial ring with $d\geq 3$ over a field $k$, and let $E$ be a $R$-module of projective dimension one and $\rank E=e$. Assume that $E$ is minimally generated by $\mu(E) = n\geq d+e$ elements and $E$ satisfies $G_{s}$, but not $G_{s+1}$, for some integer $2 \leq s \leq d-1$. Assume furthermore that $E$ has a presentation matrix $\varphi$ consisting of linear entries in $R$ with $I_1(\varphi) = (x_1,\ldots,x_d)$ and that, after possibly a change of coordinates, modulo $(x_1, \ldots, x_{s})$ the matrix $\overline{\varphi}$ has rank 1.

\begin{enumerate}
    \item[{\rm(a)}] If the nonzero entries of $\overline{\varphi}$ are in one column, the Rees algebra of $E$ is $\R(E) \cong R[T_1,\ldots,T_n]/\J$ with
$$\J= (\ell_1,\ldots,\ell_{n-e}):(x_1,\ldots,x_s) = (\ell_1,\ldots,\ell_{n-e})+I_{s}(M)$$
where $\ell_1,\ldots,\ell_{n-e}$ are linear forms such that $[\ell_1 \ldots \ell_{n-e}]= [T_1 \ldots T_n] \cdot \varphi$ and $M$ is an $s\times (n-e-1)$ matrix with entries in $k[T_1, \ldots, T_n]$ such that $[\ell_1\ldots\ell_{n-e-1}] = [x_1\ldots x_{s}]\cdot M$.

\vspace{1mm}

    \item[{\rm(b)}] If the nonzero entries of $\overline{\varphi}$ are in one row, the Rees algebra of $E$ is $\R(E) \cong R[T_1,\ldots,T_n]/\J$ with
$$\J= (\ell_1,\ldots,\ell_{n-e}):(x_1,\ldots,x_s) = (\ell_1,\ldots,\ell_{n-e})+I_{s}(M)+I_{s+1}(C)$$
where $\ell_1,\ldots,\ell_{n-e}$ are linear forms such that $[\ell_1 \ldots \ell_{n-e}]= [T_1 \ldots T_n] \cdot \varphi$. Moreover, $M$ is an $s\times (n-e-d+s)$ matrix with entries in $k[T_1, \ldots, T_n]$ such that $[\ell_1\ldots\ell_{n-e-d+s}] = [x_1\ldots x_{s}]\cdot M$ and $C$ is an $(s+1)\times (n-e)$ matrix such that $[\ell_1 \ldots \ell_{n-e}] = [x_1 \ldots x_s \,\,\, T_n] \cdot C$.
\end{enumerate}
Moreover, in either case, the Rees algebra $\R(E)$ is a Cohen-Macaulay domain of dimension $d+e$.
\end{thmx}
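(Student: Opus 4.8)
The plan is to deduce all assertions of the theorem --- both formulas for $\J$, the intermediate colon description, and the statement that $\R(E)$ is a Cohen--Macaulay domain of dimension $d+e$ --- from the single claim that the ring presented by the right-hand side of the displayed formula for $\J$ is a Cohen--Macaulay domain of dimension $d+e$. Write $\underline{\ell}=\ell_1,\ldots,\ell_{n-e}$, and set $\mathcal{B}=R[T_1,\ldots,T_n]/\big((\underline{\ell})+I_s(M)\big)$ in case (a), respectively $\mathcal{B}=R[T_1,\ldots,T_n]/\big((\underline{\ell})+I_s(M)+I_{s+1}(C)\big)$ in case (b). One first records the chain
\[
(\underline{\ell})+I_s(M)\ \big(+\,I_{s+1}(C)\big)\ \subseteq\ (\underline{\ell}):(x_1,\ldots,x_s)\ \subseteq\ \J .
\]
The right-hand inclusion is immediate: $x_1$ is a nonzerodivisor on $R$, so any $g$ with $x_1g\in(\underline{\ell})$ maps to an $R$-torsion element of $\S(E)=R[T_1,\ldots,T_n]/(\underline{\ell})$ and therefore lies in $\J$. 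For the left-hand inclusion one uses Cramer's rule on the identities $[\ell_1\,\cdots\,\ell_{n-e-1}]=[x_1\,\cdots\,x_s]\cdot M$ and $[\ell_1\,\cdots\,\ell_{n-e}]=[x_1\,\cdots\,x_s\ T_n]\cdot C$: for a maximal square submatrix $N$ of $M$ (resp.\ of $C$) with minor $\delta$, right multiplication of the corresponding subvector of $\underline{\ell}$ by $\operatorname{adj}(N)$ yields $\delta\cdot[x_1\,\cdots\,x_s]$ (resp.\ $\delta\cdot[x_1\,\cdots\,x_s\ T_n]$) with all entries in $(\underline{\ell})$, so $x_i\delta\in(\underline{\ell})$ for $i=1,\ldots,s$. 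Thus the theorem reduces to showing that the surjection $\mathcal{B}\longtwoheadrightarrow\R(E)$ is an isomorphism; and since $\dim\R(E)=d+e$ (a module of rank $e$ over a $d$-dimensional domain) while a surjection between domains that are finitely generated algebras over $k$ of the same dimension must be an isomorphism, it suffices to prove that $\mathcal{B}$ is a Cohen--Macaulay domain of dimension $d+e$. (Equivalently, one may organize this as a successive approximation of $\R(E)$ starting from $\S(E)$; the content is the same.)

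For the Cohen--Macaulayness of $\mathcal{B}$ --- equivalently, that $(\underline{\ell})+I_s(M)\,(+\,I_{s+1}(C))$ is a perfect ideal of height $n-e$ in $R[T_1,\ldots,T_n]$ --- the plan is to exhibit a finite free resolution of $\mathcal{B}$ obtained by splicing the Koszul complex on the linear forms $\underline{\ell}$ with a Buchsbaum--Rim/Eagon--Northcott-type complex resolving the determinantal part attached to $M$, iterated once more in case (b) to account for $I_{s+1}(C)$. Concretely, one passes to the generic version in which the entries of $\varphi$ (hence those of $M$, of $C$, and the forms $\ell_j$) are replaced by indeterminates, invokes the classical perfection and resolutions of generic determinantal ideals, and then specializes back, verifying that the height does not drop. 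This last height computation is precisely where the hypotheses are consumed: $I_1(\varphi)=(x_1,\ldots,x_d)$ with linear entries, together with $G_s$ but not $G_{s+1}$ and the rank-one reduction of $\overline{\varphi}$ after the change of coordinates, force $\hgt\big((\underline{\ell})+I_s(M)\,(+\,I_{s+1}(C))\big)=n-e$ and the grades of the relevant minors of $M$ and $C$ needed for the acyclicity lemma. I expect this to be the main obstacle, and also the point at which case (b) becomes genuinely harder: the extra ideal $I_{s+1}(C)$ lengthens the resolution and makes the height bookkeeping considerably more intricate.

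Finally one must show that $\mathcal{B}$ is a domain. Cohen--Macaulayness makes $\mathcal{B}$ unmixed, so every minimal prime has dimension $d+e$ and there are no embedded primes; it therefore suffices to rule out extraneous components. Reducing modulo $(x_1,\ldots,x_s)$, the matrix $\overline{\varphi}$ has rank one, so the images of the $\ell_j$ and of the minors of $M$ and $C$ become explicit and one computes $\dim \mathcal{B}/(x_1,\ldots,x_s)\mathcal{B}<d+e$; by unmixedness, no minimal prime of $\mathcal{B}$ contains $(x_1,\ldots,x_s)$. On the complement of $V(x_1,\ldots,x_s)$ the module $E$ is of linear type --- the torsion of $\S(E)$ being concentrated on $V(x_1,\ldots,x_s)$ by the structure of $\varphi$ --- so over that dense open set $\mathcal{B}$ coincides with $\S(E)=\R(E)$, which is a domain; since this open set meets every component of $\spec\mathcal{B}$, the ring $\mathcal{B}$ is irreducible, and being generically reduced it is reduced by $S_1$. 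Hence $\mathcal{B}$ is a Cohen--Macaulay domain of dimension $d+e$, the surjection onto $\R(E)$ is an isomorphism, and all the displayed equalities, as well as the concluding statement about $\R(E)$, follow simultaneously.
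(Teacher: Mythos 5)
Your reduction of the theorem to the single claim that $\mathcal{B}$ is a Cohen--Macaulay domain of dimension $d+e$ is sound, and your argument that $\mathcal{B}$ is a domain \emph{granting} Cohen--Macaulayness (unmixedness, no component over $V(x_1,\ldots,x_s)$ by a dimension count, agreement with $\R(E)$ on the complementary open set via the linear-type locus, then $R_0+S_1$) is essentially the same circle of ideas the paper uses in \Cref{LTlocus}, \Cref{minFitt} and \Cref{Jasat}. The genuine gap is the Cohen--Macaulayness itself, which you defer entirely to a resolution obtained by ``splicing the Koszul complex on $\underline{\ell}$ with a Buchsbaum--Rim/Eagon--Northcott-type complex, iterated once more in case (b),'' to be justified by passing to a generic matrix and specializing. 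In case (a) this can be salvaged: there $\J$ is an $(n-e)$-residual intersection of the complete intersection $(x_1,\ldots,x_s,\gamma)$ (\Cref{J RI prop - column case}), and the complex of \cite{BKM90} together with \Cref{thmResInt} gives exactly the generation and perfection you need (the paper notes this alternative route in \Cref{Column case - RI remark}). In case (b), however, no such complex exists off the shelf: the ideal $(\underline{\ell})+I_s(M)+I_{s+1}(C)$ is a residual intersection only of the ideal $(x_1,\ldots,x_s,x_{s+1}T_n,\ldots,x_dT_n)$, which for $s\leq d-2$ is not unmixed, let alone a complete intersection, so \Cref{thmResInt} and the \cite{BKM90} resolution do not apply (see \Cref{Row Case - RI remark}, where the paper states explicitly that the residual-intersection techniques of \cite{CPW} fail here). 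Your proposed ``iteration'' to absorb $I_{s+1}(C)$ is not a construction, and the acyclicity/height bookkeeping you acknowledge deferring is precisely the content of the theorem in that case.

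A second, related problem is the genericity step. The matrices $M$ and $C$ are not generic and cannot be made so: their entries are constrained by the transition identities $[\ell_1\ldots\ell_{n-e-d+s}]=[x_1\ldots x_s]\cdot M$ and $[\ell_1\ldots\ell_{n-e}]=[x_1\ldots x_s\;T_n]\cdot C$, the last row of $C$ consists of the variables $x_{s+1},\ldots,x_d$, and $M$ is a submatrix of the Jacobian dual of $\varphi$. Replacing entries by indeterminates changes the ideal class entirely (for instance, the generic $I_{s+1}$ of an $(s+1)\times(n-e)$ matrix has height $n-e-s$, far larger than what occurs here), so ``specializing back and checking the height does not drop'' is not a meaningful operation. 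The paper instead proves case (b) by induction on $d-s$: it identifies $\R(E_1)$ for $E_1=\coker\varphi_1$ viewed over $k[x_1,\ldots,x_{d-1}]$ via the induction hypothesis (\Cref{R(E1) row case}), and then controls the height-one prime $\J/\J_1$ through a linkage argument between the ideals $\overline{\K}$ and $\overline{(x_1,\ldots,x_s,T_n)}$ and the divisorial ideal $\D=\overline{\ell_{n-e}}\,\overline{\K}/\overline{T_n}$ (\Cref{colons}--\Cref{D = Candidate Ideal}); Cohen--Macaulayness of $\R(E)$ falls out because $\D\cong\overline{\K}$ is Cohen--Macaulay. Some replacement for this inductive/linkage mechanism is needed; without it your proof of case (b) does not go through. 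Finally, note that even the height statements you invoke for $I_s(M)$ (needed in your dimension counts) are obtained in the paper only \emph{after} establishing that $\L:(x_1,\ldots,x_s)^\infty$ is prime of height $n-e$ (\Cref{Jasat} feeding into \Cref{B'hgt}), so you should order your argument to avoid circularity there.
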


With the condition that $\rank \overline{\varphi}=1$, the nonzero entries of $\overline{\varphi}$ are concentrated in either a single row or a single column. Interestingly,  $\R(E)$ is Cohen-Macaulay in either case, but the shape of the defining ideal differs in the two settings. However, in either case, the matrices $M$ and $C$ which provide the nontrivial equations are obtained from submatrices of the \textit{Jacobian dual} matrix of the presentation $\varphi$ (see \Cref{sec:preliminaries}).

Not only does the Rees ring $\R(E)$ differ in each of the cases above, but the \textit{special fiber ring} $\F(E) \coloneq \R(E) \otimes_R k$ does as well. In particular, the \textit{analytic spread} of $E$, i.e. the Krull dimension $\ell(E) \coloneq \dim \F(E)$ differs in each case. Indeed, as a consequence of \Cref{Intro theorem - Defining Ideal of R(E)}, we obtain the result below, which is a compilation of \Cref{F(I)defideal Column} and \Cref{F(I)defideal Row}.

\begin{thmx}\label{Intro theorem - Defining Ideal of F(E)}
With the assumptions of \Cref{Intro theorem - Defining Ideal of R(E)}, we have the following. 
\begin{enumerate}
    \item[{\rm(a)}] If the nonzero entries of $\overline{\varphi}$ are in one column, the special fiber ring is $\F(E) \cong k[T_1,\ldots,T_n]/I_{s}(M)$. Moreover, $\F(E)$ is a Cohen-Macaulay domain of dimension $\ell(E)= s+e$.

    \vspace{1mm}
   \item[{\rm(b)}] If the nonzero entries of $\overline{\varphi}$ are in one row, the special fiber ring is $\F(E) \cong k[T_1,\ldots,T_n]/I_{s}(M)$. Moreover, $\F(E)$ is a Cohen-Macaulay domain of dimension $\ell(E)= d+e-1$. \end{enumerate}
\end{thmx}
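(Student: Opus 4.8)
The plan is to deduce both statements directly from Theorem A by tensoring with $k$ over $R$, and then to read off the dimension from the structure of the resulting quotient. First I would observe that $\F(E) = \R(E)\otimes_R k = \R(E)/(x_1,\ldots,x_d)\R(E)$, so presenting $\R(E) = R[T_1,\ldots,T_n]/\J$ as in Theorem A, we get $\F(E) \cong k[T_1,\ldots,T_n]/(\J + (x_1,\ldots,x_d))/(x_1,\ldots,x_d)$, i.e. $\F(E) \cong k[T_1,\ldots,T_n]/\overline{\J}$, where $\overline{\J}$ is the image of $\J$ modulo $(x_1,\ldots,x_d)$. In case (a), $\J = (\ell_1,\ldots,\ell_{n-e}) + I_s(M)$; since each $\ell_i$ lies in $(x_1,\ldots,x_d)R[\T]$ (the entries of $\varphi$ are linear in the $x_j$), the linear forms $\ell_i$ die modulo $(x_1,\ldots,x_d)$, and the entries of $M$ lie in $k[\T]$, so $\overline{I_s(M)} = I_s(M)$. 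Hence $\F(E) \cong k[\T]/I_s(M)$. In case (b) the same reasoning applies: $\ell_i \equiv 0$, $I_s(M)$ survives unchanged because $M$ has entries in $k[\T]$, and I need to check that $I_{s+1}(C) \subseteq I_s(M) + (x_1,\ldots,x_d)$ modulo $(x_1,\ldots,x_d)$; since $[\ell_1\cdots\ell_{n-e}] = [x_1\cdots x_s\,\,T_n]\cdot C$ and the $\ell_i$ and $x_j$ vanish mod $(x_1,\ldots,x_d)$, the rows of $\overline{C}$ corresponding to $x_1,\ldots,x_s$ become irrelevant and the relation forces the last row of $\overline{C}$ (the $T_n$-row) to satisfy $T_n\cdot(\text{last row of }\overline C) = 0$, hence $\overline{C}$ has its bottom row zero and $\overline{I_{s+1}(C)} = 0$. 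Thus again $\F(E) \cong k[\T]/I_s(M)$, which explains why the defining ideal of the special fiber has the same shape in both cases even though $\R(E)$ does not.

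Next I would establish the ring-theoretic properties. That $\F(E)$ is a domain follows because $\R(E)$ is a domain (Theorem A) and $\F(E)$ is a quotient of it — but more carefully, one needs that $(x_1,\ldots,x_d)\R(E)$ is a prime ideal; this should follow from the explicit presentation $k[\T]/I_s(M)$ together with a genericity/linear-independence argument on the entries of $M$, or alternatively by identifying $\F(E)$ with the image of the fiber-type map and invoking that $k$ is algebraically closed-free reasoning via the domain property of $\R(E)$ and the fact that $x_1,\ldots,x_d$ form a regular sequence on $\R(E)$ (which holds since $\R(E)$ is Cohen–Macaulay of dimension $d+e$ and the $x_i$ are part of a homogeneous system of parameters). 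For Cohen–Macaulayness of $\F(E)$: since $\R(E)$ is Cohen–Macaulay of dimension $d+e$ and $x_1,\ldots,x_d \in \m$ form a regular sequence on $\R(E)$ — which I would verify by a dimension count, noting $\dim \R(E)/(x_1,\ldots,x_d)\R(E) = \dim\F(E) \geq d + e - d = e$ and combining with the upper bounds below — the quotient $\F(E) = \R(E)/(x_1,\ldots,x_d)\R(E)$ is Cohen–Macaulay. Actually the cleanest route is: compute $\dim\F(E)$ first, show it equals $(d+e) - d = e$... wait, that contradicts the claimed values $s+e$ and $d+e-1$, so in fact $x_1,\ldots,x_d$ is \emph{not} a regular sequence on $\R(E)$, and I must instead argue Cohen–Macaulayness of $\F(E)$ directly from the presentation $k[\T]/I_s(M)$ via the Eagon–Northcott or Buchsbaum–Rim complex, using that $M$ is an $s\times(n-e-1)$ (resp. $s\times(n-e-d+s)$) matrix whose ideal $I_s(M)$ has the expected height, so that the generic perfection of determinantal ideals applies.

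So the real content is: (i) $I_s(M)$ is a perfect ideal of the expected height in $k[\T]$, and (ii) computing that height, equivalently $\dim k[\T]/I_s(M) = n - \hgt I_s(M)$. For a sufficiently generic $s\times m$ matrix $M$ of linear forms, $\hgt I_s(M) = m - s + 1$, giving $\dim\F(E) = n - (m-s+1)$; plugging $m = n-e-1$ gives $s + e$, matching case (a), and plugging $m = n - e - d + s$ gives $d + e - 1$, matching case (b). The main obstacle is therefore verifying that $M$ is "generic enough" — i.e. that $I_s(M)$ genuinely has the maximal height $m - s + 1$ rather than something smaller — which cannot follow from formal nonsense and must use the hypothesis that $\varphi$ has linear entries with $I_1(\varphi) = (x_1,\ldots,x_d)$, the structure of the Jacobian dual, and crucially the hypothesis that $E$ satisfies $G_s$ but \emph{not} $G_{s+1}$ (this failure is what pins down the rank-$1$ behavior of $\overline\varphi$ and hence the exact size of $M$). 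I would extract this height computation from the analysis already carried out in the proof of Theorem A — in particular the analytic spread $\ell(E)$ is computed there as part of establishing the dimension of $\R(E)$, and $\ell(E) = \dim\F(E)$ by definition — so in the end Theorem B is essentially a corollary: the presentation comes from tensoring Theorem A with $k$, and the dimension and Cohen–Macaulayness come from the determinantal structure of $I_s(M)$ together with the height estimate proved en route to Theorem A. I would close by double-checking the degenerate bookkeeping (e.g. that $n - e - 1 \geq s$ and $n - e - d + s \geq s$, which follow from $n \geq d + e$) so that the matrices $M$ have the stated shapes and $I_s(M)$ is a proper nonzero ideal.
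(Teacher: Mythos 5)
Your proposal follows essentially the same route as the paper: both corollaries (\Cref{F(I)defideal Column} and \Cref{F(I)defideal Row}) are obtained by reducing the defining ideal of \Cref{Intro theorem - Defining Ideal of R(E)} modulo $(x_1,\ldots,x_d)$ — your check that $I_{s+1}(C)$ dies modulo $\m$ because the bottom row of $\overline{C}$ must vanish is correct, and is a detail the paper glosses over — and then reading off Cohen--Macaulayness and the dimension from the fact that $I_s(M)$ is a determinantal ideal of the maximal Eagon--Northcott height, which is exactly the content of \Cref{B'hgt} established en route to Theorem A. The one soft spot is the domain claim: your suggested justifications (quotient of a domain, or $x_1,\ldots,x_d$ regular on $\R(E)$) do not work, and a maximal-height ideal of minors of a matrix of linear forms need not be prime in general; the paper gets primality of $I_s(M)$ as part of \Cref{B'hgt}, by combining the primality of the saturation $(\ell_1,\ldots,\ell_j):(x_1,\ldots,x_s)^\infty$ with the results of Boswell--Mukundan, so you are right that everything needed is ``proved en route to Theorem A,'' but the primality, not just the height, is what must be extracted.
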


We note that the Cohen-Macaulay property of $\F(E)$ does not depend on the two cases of \Cref{Intro theorem - Defining Ideal of F(E)}, but the analytic spread of $E$ does. However, the two phenomena do coincide when $s=d-1$, recovering our findings in \cite{CPW}. Additionally, when $\rank E=1$, the module $E$ is isomorphic to an $R$-ideal $I$, hence \Cref{Intro theorem - Defining Ideal of R(E)} and \Cref{Intro theorem - Defining Ideal of F(E)} are applicable to Rees algebras of perfect ideals of grade two. In particular, when $e=1$ and $s=d-1$, \Cref{Intro theorem - Defining Ideal of R(E)} and \Cref{Intro theorem - Defining Ideal of F(E)} recover the main results of \cite{Nguyen14,Nguyen17}. Moreover, to the best of our knowledge, our findings provide the first known results for Rees algebras of ideals and modules satisfying $G_s$ for $s\leq d-2$.

In \cite{CPW}, the primary technique to study the Rees ring was to produce a \textit{generic Bourbaki ideal} $I$ of $E$, reducing the study of $\R(E)$ to the study of $\R(I)$, where information is more accessible. However, in the present paper, we forgo this technique and instead employ a method of \textit{successive approximations} of the Rees algebra $\R(E)$, as the essential ingredient to the proofs of \Cref{Intro theorem - Defining Ideal of R(E)} and \Cref{Intro theorem - Defining Ideal of F(E)}. From a free presentation $R^{m} \overset{\varphi}{\rightarrow}R^n \rightarrow E \rightarrow 0$, one factors the map  $R^n \rightarrow E$ into a sequence of epimorphisms 
$$R^n = E_{m}\twoheadrightarrow E_{m-1} \twoheadrightarrow \cdots\cdots \twoheadrightarrow E_{0} = E$$ 
which induces a sequence of epimorphisms 
$$R[T_1,\ldots,T_n] = \R(E_{m})\twoheadrightarrow \R(E_{m-1}) \twoheadrightarrow \cdots\cdots \twoheadrightarrow \R(E_{0}) = \R(E)$$
factoring the natural map $\pi:\,R[T_1,\ldots,T_n] \twoheadrightarrow \R(E)$. If one is able to understand these intermediate algebras, and their defining ideals in particular, then one has a much better chance of determining the defining ideal of $\R(E)$. We remark that this is not an entirely novel approach, and this technique originates in \cite{Madsen}. Moreover, we note that variations of this technique have been previously applied in \cite{BM16,KPU11,Weaver23,Weaver24}. However, our main contribution is to show that the construction of these approximation modules is compatible with the residual condition $G_s$, when $E$ has projective dimension one. Moreover, we describe how the shape of the defining ideal of an intermediate ring above can be deduced from the previous algebra.
\smallskip

We now briefly describe how this paper is organized. In \Cref{sec:preliminaries}, we review the necessary preliminary material on Rees rings of modules and their associated algebras, as well as residual intersections of ideals. We also introduce the method of successive approximations of the Rees algebra $\R(E)$ and how to implement this construction in the study of the defining ideal. In \Cref{sec: Two forms of the matrix}, we discuss the primary setting for the paper and identify the two cases that must be considered, stemming from the rank condition in \Cref{Intro theorem - Defining Ideal of R(E)}. The remainder of the section is spent discussing the similarities between the two cases, while the proceeding sections are spent analyzing their differences. In \Cref{sec:column section,sec:row section}, we determine the defining ideal $\J$ of the Rees ring $\R(E)$ in each of the two cases. With this, the Cohen-Macaulay and fiber-type property are explored in each case. Finally, in \Cref{sec: Examples} we include several examples showing that the conclusion of \Cref{Intro theorem - Defining Ideal of R(E)} may not necessarily hold if we weaken any of the assumptions.


\section{Preliminaries} \label{sec:preliminaries}

In this section, we recall background information on modules with a rank and their Rees algebras, needed throughout the article. In particular, we include crucial observations regarding modules of projective dimension one in \Cref{sec:column stripping}. For the present setting, assume that $R$ is a Noetherian ring and $E$ is a finitely generated $R$-module. We note that an ideal of positive grade is isomorphic to a torsion-free module of rank one, hence all statements discussed here apply to Rees rings of ideals when $\rank E=1$.


\subsection{Modules and their Rees algebras}

Recall that $E$ is said to have a \textit{rank} if $E \otimes_R \mathrm{Quot}(R) \cong \mathrm{Quot}(R)^e$ for some integer $e$, where $\mathrm{Quot}(R)$ denotes the total ring of quotients of $R$; in this case, we write $\rank(M)=e$. In particular, free modules have a rank, hence modules of finite projective dimension do as well, since rank is additive along exact sequences.

Given a generating set $E = Ra_1+\cdots +Ra_n$, such a module admits a corresponding free presentation 
\begin{equation}\label{GeneralPresentation}
R^m \overset{\varphi}{\longrightarrow} R^n \longrightarrow {E} \longrightarrow 0.
\end{equation}
Recall that, for $0 \leq i \leq n$, the $i^{\text{th}}$ \textit{Fitting ideal} of $E$ is the ideal $\fitt_i(E) \coloneq I_{n-i}(\varphi)$, i.e. the ideal generated by all $(n-i)\times (n-i)$ minors of $\varphi$. We remark that these ideals are particularly useful invariants of the module $E$.

\begin{prop}[{\cite[20.4--20.6]{Eisenbud}}]\label{fitt}
 With a module $E$ as above, we have the following.
 \begin{itemize}
     \item[{\rm(a)}] $\fitt_i(E)$ depends only on the module $E$ and the index $i$. In particular, it is independent of the choice of presentation. 
     \item[{\rm(b)}] If $R$ is local, then $\fitt_i(E) = R$ if and only if $\mu(E) \leq i$.
     \item[{\rm(c)}] $V(\fitt_i(E)) = \{\p \in \spec(R)\,|\, \mu(E_\p)\geq i+1\}$.
 \end{itemize}
\end{prop}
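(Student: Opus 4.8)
The plan is to regard part (a) as the substance of the proposition; parts (b) and (c) will then follow quickly from Nakayama's lemma and from the compatibility of ideals of minors with ring maps. Two elementary facts about minors underlie everything. First, the Cauchy--Binet formula: for matrices $A,B$ over $R$ of compatible sizes, every $j\times j$ minor of $AB$ is an $R$-linear combination of $j\times j$ minors of $B$, and symmetrically of $A$; consequently $I_j(AB)\subseteq I_j(A)\cap I_j(B)$, and if $U,V$ are invertible then $I_j(U\varphi V)=I_j(\varphi)$. Second, Laplace expansion along an identity block gives $I_j\!\left(\begin{smallmatrix}\varphi & 0\\ 0 & I_t\end{smallmatrix}\right)=I_{j-t}(\varphi)$. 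I will also use the convention $I_j(\varphi)=R$ for $j\le 0$, so that the formula $\fitt_i(E)=I_{n-i}(\varphi)$ is unambiguous.

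For (a) I would argue in two stages. \emph{Stage 1: fixed generating set.} Fix generators $a_1,\dots,a_n$ of $E$ and let $K=\ker(R^n\twoheadrightarrow E)$. Every presentation matrix $\varphi$ has image exactly $K$; so if $\varphi\colon R^m\to R^n$ and $\varphi'\colon R^{m'}\to R^n$ are two of them, projectivity of $R^m$ and $R^{m'}$ yields factorizations $\varphi=\varphi' h$ and $\varphi'=\varphi h'$, whence $I_{n-i}(\varphi)=I_{n-i}(\varphi')$ by Cauchy--Binet. \emph{Stage 2: change of generating set.} Given a second generating set $b_1,\dots,b_p$ with $b_\ell=\sum_j c_{\ell j}a_j$, compare each of $\{a_j\}$ and $\{b_\ell\}$ with the union $\{a_j\}\cup\{b_\ell\}$: a presentation for the union is $\left(\begin{smallmatrix}\varphi & -C^{\mathsf T}\\ 0 & I_p\end{smallmatrix}\right)$ where $C=(c_{\ell j})$, and invertible row operations using the $I_p$ block clear the block $-C^{\mathsf T}$, after which Laplace expansion gives $I_{(n+p)-i}=I_{n-i}(\varphi)$. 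Hence the Fitting ideal computed from $\{a_j\}$ agrees with the one from the union, and symmetrically with the one from $\{b_\ell\}$; combined with Stage 1, this proves (a).

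For (b), work over the local ring $(R,\m)$. If $\mu(E)\le i$, a presentation with $n=\mu(E)\le i$ generators has $n-i\le 0$, so $\fitt_i(E)=I_{n-i}(\varphi)=R$. Conversely, start from a \emph{minimal} presentation, so $n=\mu(E)$ and, by Nakayama, every entry of $\varphi$ lies in $\m$; then every $(n-i)$-minor lies in $\m$ as soon as $n-i\ge 1$, so $\fitt_i(E)\subseteq\m\ne R$ unless $n\le i$. For (c), note that forming ideals of minors commutes with the ring homomorphism $R\to R_\p$ and that localization is exact, so localizing a presentation of $E$ gives one of $E_\p$ and hence $\fitt_i(E)_\p=\fitt_i(E_\p)$; since also $\fitt_i(E)\subseteq\p$ iff $\fitt_i(E)_\p\ne R_\p$, applying (b) over $R_\p$ gives $\p\in V(\fitt_i(E))$ iff $\fitt_i(E_\p)\ne R_\p$ iff $\mu(E_\p)\ge i+1$, as claimed.

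The main obstacle is confined to Stage 2 of (a): it is the bookkeeping needed to exhibit a presentation matrix for the union of two generating sets and to track how the row operations and the Laplace expansion shift the size of the relevant minors by exactly the right amount. Everything else --- the two lemmas on minors, and parts (b) and (c) --- is routine once this reduction is in place.
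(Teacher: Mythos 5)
Your proof is correct. The paper does not prove this proposition at all --- it is quoted directly from \cite[20.4--20.6]{Eisenbud} --- and your argument (compare two presentations over a fixed generating set via Cauchy--Binet, then compare two generating sets through their union using the block matrix $\left(\begin{smallmatrix}\varphi & -C^{\mathsf T}\\ 0 & I_p\end{smallmatrix}\right)$ and row reduction, followed by Nakayama for (b) and compatibility with localization for (c)) is essentially the standard textbook proof of Fitting's lemma that the citation points to.
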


Condition (c) above is particularly useful as it easily relates to the condition $G_s$ from the introduction.

\begin{rem}\label{defGs}
A module $E$ with $\rank E =e$ satisfies $G_s$ if and only if $\hgt \fitt_i(E) \geq i-e+2$ for all $e\leq i\leq s+e-2$. As before, we say that $E$ satisfies $G_\infty$ if $E$ satisfies $G_s$ for all $s$.
\end{rem}

In addition to the Fitting invariants, a free presentation of $E$ also provides information on the \textit{symmetric algebra} of $E$, $\S(E)$. More precisely, if $E = R a_1 + \ldots + R a_n$ and (\ref{GeneralPresentation}) is a presentation corresponding to this generating set, there is a natural homogeneous epimorphism of graded $R$-algebras 
\begin{equation}\label{Sym natural map}
\eta \colon R[T_1,\ldots,T_n] \longtwoheadrightarrow \S(E)    
\end{equation}
defined by mapping each $T_i$ to $a_i\in [\S(E)]_1$, and extending $R$-linearly. Moreover, the kernel $\L= \ker(\eta)$ is generated by linear forms $\ell_1,\ldots,\ell_m$ such that $[T_1\ldots T_n]\cdot \varphi = [\ell_1 \ldots \ell_m]$. Thus, there is an induced isomorphism $\S(E) \cong R[T_1,\ldots,T_n]/\L$. We remark that this description is independent of the choice of presentation (\ref{GeneralPresentation}). 

As noted in the introduction, the \textit{Rees algebra} of $E$ is $\R(E) = \S(E)/\tau(\S(E))$, where $\tau(\S(E))$ is the $R$-torsion submodule of $\S(E)$. If $E\cong I$ for an $R$-ideal $I$, the Rees algebra $\mathcal{R}(E)$ is isomorphic to the subalgebra $\R(I) = R[It] \subseteq R[t]$ defined in the introduction. If $E$ is an $R$-module with rank $e$, the Krull dimension of $\R(E)$ is $\dim \R(E)=d+e$, where $d=\dim R$ (see, e.g., \cite[2.2]{SUV03}). 

Composing the map $\eta$ in (\ref{Sym natural map}) with the natural map factoring $\tau(\S(E))$, one obtains a second homogeneous epimorphism 
\begin{equation}\label{Rees natural map}
\pi \colon R[T_1,\ldots,T_n] \longtwoheadrightarrow \R(E)    
\end{equation}
mapping $T_i \mapsto a_i\in [\R(E)]_1$. There is an induced isomorphism $\R(E) \cong R[T_1,\ldots,T_n]/\J$, where $\J \coloneq \mathrm{ker}(\pi)$ is the \textit{defining ideal} of $\R(E)$. By construction, it is clear that $\L\subseteq \J$. Moreover, $\L$ is actually the degree-one component of $\J$, $\L= [\J]_1$ \cite{Vasconcelos91}. One says that $E$ is of \textit{linear type} if $\L = \J$, as in this case all generators of $\J$ are linear forms.

If $E$ is not of linear type, the higher-degree generators of $\J$ can often be detected by means of a Jacobian dual matrix of the presentation $\varphi$ of $E$, or via the special fiber ring $\mathcal{F}(E)$ of $E$. We next briefly recall both of these notions.

\begin{defn}
Let $E$ be a module with presentation (\ref{GeneralPresentation}) and let $\ell_1,\ldots,\ell_m$ be the generators of $\L$ as before. There exists an $r\times m$ matrix $B(\varphi)$ with linear entries in $R[T_1,\ldots,T_n]$ such that
\begin{equation}\label{JD equation}
[T_1 \ldots T_n] \cdot \varphi=[\ell_1 \ldots \ell_m] = [x_1\ldots x_r]\cdot B(\varphi),
\end{equation}
where $(x_1,\ldots,x_r)$ is an ideal containing $I_1(\varphi)$. The matrix $B(\varphi)$ is a \textit{Jacobian dual} of $\varphi$, with respect to the sequence $x_1,\ldots,x_r$.    
\end{defn}

We remark that $B(\varphi)$ is not unique in general. However, if $R=k[x_1,\ldots,x_d]$, then there is a unique Jacobian dual matrix $B(\varphi)$, with respect to $x_1,\ldots,x_d$, if and only if the entries of $\varphi$ are linear \cite[p.~47]{SUV93}. In this case, the entries of $B(\varphi)$ belong to the subring $k[T_1,\ldots,T_n]$, a fact which we will exploit in several proofs in later sections of this article.

\begin{defn}
Assume that $R$ is a local ring with maximal ideal $\m$ and residue field $k$. Let $E$ be a finitely generated $R$-module as above. The \textit{special fiber ring} of $E$ is 
$$\F(E) \coloneq  \R(E)\otimes_R k \cong \R(E)/\m \R(E).$$ 
The Krull dimension of $\F(E)$ is called the \textit{analytic spread} of $E$ and is denoted by $\ell(E)$.    
\end{defn}

This notion may also be adapted to the graded setting when $R$ is a standard graded polynomial ring $R=k[x_1, \ldots, x_d]$, over a field $k$. In this setting, the special fiber ring is similarly defined using the unique homogeneous maximal ideal $\m = (x_1,\ldots,x_d)$.

The natural map $\pi$ of (\ref{Rees natural map}) induces an epimorphism of graded $k$-algebras 
$$\psi \colon R[T_1,\ldots,T_n] \otimes_R k \cong k[T_1,\ldots,T_n] \longtwoheadrightarrow \F(E) \cong \R(E)/ \m \R(E)$$
such that $\psi(T_i) = a_i + \m \R(E) \in [\F(E)]_1$. In particular, $\ker(\psi)R[T_1, \ldots, T_n] + \L \subseteq \J$ and we say that $E$ is of \textit{fiber type} if this containment is an equality. 


\subsection{Successive approximations of Rees algebras}\label{sec:column stripping}

In this subsection, we discuss a useful technique used to approximate Rees algebras of modules of projective dimension one, expanding on the method introduced in \cite{Madsen}. Our setting throughout is as follows.

\begin{set}\label{Strip Setting}
Let $R$ be a Noetherian ring, and let $E$ be an $R$-module of rank $e$ having projective dimension one and minimally generated by $\mu(E) =n$ forms. With this, we may write
\begin{equation}\label{pdim1 pres}
0 \longrightarrow R^{n-e} \overset{\varphi}{\longrightarrow}R^n \longrightarrow E \longrightarrow 0    
\end{equation}
to denote a minimal resolution of $E$.  Let $\varphi_i$ denote the submatrix of $\varphi$ obtained by deleting the last $i$ columns of $\varphi$ and let $E_i = \coker \varphi_i$. 
\end{set}

Notice that there is a chain of epimorphisms
\begin{equation}\label{Module Surjections}
R^n = E_{n-e}\twoheadrightarrow E_{n-e-1} \twoheadrightarrow \cdots\cdots \twoheadrightarrow E_{0} = E.
\end{equation}
factoring the map $R^n\twoheadrightarrow E$ in (\ref{pdim1 pres}). With the sequence of modules above, we make the following observation.

\begin{lemma}\label{Gs Right to Left}
With the assumptions of \Cref{Strip Setting} and the modules in (\ref{Module Surjections}), we have the following.
\begin{enumerate}
    \item[{\rm (a)}] For $1\leq i\leq n-e-1$, each $E_i$ is an $R$-module of projective dimension one with $\rank E_i = e+i$.

    \item[{\rm (b)}] If $E_i$ satisfies $G_s$, then $E_j$ satisfies $G_s$ as well, for all $j\geq i$. 
\end{enumerate}
\end{lemma}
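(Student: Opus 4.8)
The plan is to analyze the short exact sequences linking consecutive modules $E_i$ and to track how the Fitting ideals, and hence the $G_s$ condition, propagate. First, for part (a), I would observe that $\varphi_i$ is an $n \times (n-e-i)$ submatrix of the injective map $\varphi$, so $\varphi_i$ remains injective (a submatrix of a matrix defining an injection of free modules over a domain — or after localizing at associated primes — is still injective); thus $0 \to R^{n-e-i} \xrightarrow{\varphi_i} R^n \to E_i \to 0$ is a minimal free resolution, giving $\operatorname{pd} E_i \le 1$ and, by additivity of rank along exact sequences, $\operatorname{rank} E_i = n - (n-e-i) = e+i$. (One should note $E_i$ is actually a nonzero module of projective dimension exactly one, since $n \geq d+e$ forces it not to be free; but for the statement $\operatorname{pd} = 1$ suffices and the wording "projective dimension one" is what is claimed.) The key structural point is that deleting the last column of $\varphi_i$ to form $\varphi_{i+1}$ corresponds to a short exact sequence
\[
0 \longrightarrow R \longrightarrow E_{i+1} \longrightarrow E_i \longrightarrow 0,
\]
where the copy of $R$ is generated by the image of the basis vector dual to the deleted column; this is the concrete meaning of the epimorphisms in (2.12).

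For part (b), it suffices by induction to prove that if $E_i$ satisfies $G_s$ then $E_{i+1}$ does. I would translate $G_s$ into the Fitting-ideal statement from Remark 2.8: $E_i$ satisfies $G_s$ iff $\operatorname{ht} \operatorname{Fitt}_j(E_i) \ge j - (e+i) + 2$ for all $e+i \le j \le s + e + i - 2$, and similarly for $E_{i+1}$ with $e+i$ replaced by $e+i+1$. The heart of the matter is a comparison of Fitting ideals: since $\varphi_{i+1}$ is obtained from $\varphi_i$ by deleting one column, every minor of $\varphi_{i+1}$ is a minor of $\varphi_i$ (of the same size), so $I_t(\varphi_{i+1}) \subseteq I_t(\varphi_i)$ for every $t$. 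In terms of Fitting ideals of the cokernels — recalling $\operatorname{Fitt}_j(E_i) = I_{n-j}(\varphi_i)$ — this reads $\operatorname{Fitt}_{j}(E_{i+1}) \subseteq \operatorname{Fitt}_{j}(E_i)$ for all $j$. That inclusion by itself goes the wrong way for heights, so the real input is the \emph{shift}: one checks that $\operatorname{Fitt}_{j+1}(E_{i+1}) = I_{n-j-1}(\varphi_{i+1})$ compares to $\operatorname{Fitt}_{j}(E_i) = I_{n-j}(\varphi_i)$, and here the expected height bounds line up precisely because the rank also shifts by one. Concretely, the condition to be verified for $E_{i+1}$ at index $j$ is $\operatorname{ht} I_{n-j}(\varphi_{i+1}) \ge j - (e+i) + 1$, and this should follow from the corresponding condition for $\varphi_i$ together with the relationship between minors of a matrix and minors of a one-column-larger matrix.

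I expect the main obstacle to be getting this height-comparison argument cleanly right: one must be careful that the indices over which $G_s$ is tested change when $e+i$ increases, and that the inclusion of ideals of minors interacts correctly with these shifted ranges. The cleanest route is probably not to argue purely with ideals of minors of $\varphi$, but rather to use the short exact sequence $0 \to R \to E_{i+1} \to E_i \to 0$ directly: localize at a prime $\mathfrak p$ with $1 \le \dim R_{\mathfrak p} \le s-1$, and bound $\mu((E_{i+1})_{\mathfrak p}) \le \mu((E_i)_{\mathfrak p}) + 1 \le (\dim R_{\mathfrak p} + (e+i) - 1) + 1 = \dim R_{\mathfrak p} + (e+i+1) - 1$, which is exactly the $G_s$ inequality for $E_{i+1}$ (using that $\operatorname{rank} E_{i+1} = e+i+1$). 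This reduces (b) to the elementary fact that minimal number of generators is subadditive along surjections with cyclic kernel, and to part (a) for the rank bookkeeping; it avoids the delicate minor-counting entirely. I would present (b) this way, reserving the Fitting-ideal formulation only as the conceptual dictionary.
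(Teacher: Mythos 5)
Your proposal is correct, and for part (b) it takes a genuinely different route from the paper. Part (a) is essentially identical: injectivity of $\varphi_i$ inherited from $\varphi$, then additivity of rank along the resulting short exact sequence. For part (b), the paper works entirely with Fitting ideals: it uses the containment $\fitt_i(E) \subseteq \fitt_{i+1}(E_1)$ (which is the ``shift'' you identified, and follows by Laplace expansion of an $(n-i)\times(n-i)$ minor of $\varphi$ along the deleted column), reads off $\hgt \fitt_{i+1}(E_1) \geq i-e+2$ from $G_s$ for $E$, and then reindexes using $\rank E_1 = e+1$. You instead use the short exact sequence $0 \to R \to E_{i+1} \to E_i \to 0$ and the subadditivity $\mu\bigl((E_{i+1})_{\p}\bigr) \leq \mu\bigl((E_i)_{\p}\bigr) + 1$ at each prime $\p$ with $1 \leq \dim R_{\p} \leq s-1$, which verifies the defining inequality of $G_s$ directly; this is arguably more transparent and sidesteps the index bookkeeping you were (rightly) wary of. Two small points: the identification of the kernel as a copy of $R$ (rather than merely a cyclic module $R/\ann(v)$) deserves the one-line justification that $a v \in \mathrm{im}(\varphi_{i+1})$ forces $a=0$ by injectivity of $\varphi_i$ --- though for the $\mu$ estimate cyclicity alone suffices, you do use freeness implicitly for the rank bookkeeping; and note that your worry about $I_t(\varphi_{i+1}) \subseteq I_t(\varphi_i)$ ``going the wrong way'' is resolved in the paper precisely by using the shifted containment rather than the equal-size one.
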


\begin{proof}
To verify (a) it suffices to show that $\varphi_i$ is an injective map for each $i$ in the given range. However, this is clear from linear algebra, as $\varphi$ is injective. Now that $\varphi_i$ is seen to be injective, the short exact sequence
\begin{equation}\label{E_i presentation}
0 \longrightarrow R^{n-e-i} \overset{\varphi_i}{\longrightarrow}R^n \longrightarrow E_i \longrightarrow 0    
\end{equation}
shows that $E_i$ has projective dimension one and $\rank E_i=e+i$, by additivity of rank.

To prove (b), it suffices to show that if $E=E_0$ satisfies $G_s$, then $E_1$ does as well. The claim then follows by induction after reindexing. Since $E$ satisfies $G_s$ and $\fitt_i(E) \subseteq \fitt_{i+1}(E_1)$, one has that $\hgt \fitt_{i+1}(E_1) \geq i-e+2$ for all $e\leq i\leq s+e-2$. After adjusting the indices, we see that $\hgt \fitt_{i}(E_1) \geq i-(e+1)+2$ for all $e+1\leq i\leq s+(e+1)-2$. Since $\rank E_1 =e+1$, it follows that $E_1$ satisfies $G_s$. 
\end{proof}

\begin{rem}
From \Cref{Gs Right to Left} it follows that if the module $E$ satisfies $G_s$, then each of the modules in (\ref{Module Surjections}) satisfy $G_s$ as well. However, it is possible that some of the modules satisfy $G_t$ for $t>s$. For instance, $E_{n-e} = R^n$ is free and hence satisfies $G_\infty$. 
\end{rem}

Notice that the sequence of epimorphisms in (\ref{Module Surjections}) induces a sequence of homogeneous epimorphisms of symmetric algebras:
\begin{equation}\label{Sym alg Surjections}
R[T_1,\ldots,T_n] = \S(E_{n-e}) \twoheadrightarrow \S(E_{n-e-1}) \twoheadrightarrow \cdots\cdots\twoheadrightarrow \S(E_{0}) = \S(E),   
\end{equation}
which factors the natural map in (\ref{Sym natural map}). Moreover, this induces a sequence of homogeneous epimorphisms of Rees algebras:
\begin{equation}\label{Rees alg Surjections}
R[T_1,\ldots,T_n] = \R(E_{n-e}) \twoheadrightarrow \R(E_{n-e-1}) \twoheadrightarrow \cdots\cdots\twoheadrightarrow \R(E_{0}) = \R(E)    
\end{equation}
by further factoring $R$-torsion, which also factors the natural map in (\ref{Rees natural map}). In particular, the ideals defining the intermediate algebras of (\ref{Sym alg Surjections}) and (\ref{Rees alg Surjections}) relate to each other. 

\begin{notat}\label{LiJinotation}
Write $\L_i$ to denote the defining ideal of $\S(E_i)$, i.e. the kernel of the epimorphism
$$R[T_1,\ldots,T_n] \longtwoheadrightarrow \S(E_i),$$
and write $\J_i$ to denote the defining ideal of $\R(E_i)$, i.e. the kernel of the epimorphism
$$R[T_1,\ldots,T_n] \longtwoheadrightarrow \R(E_i).$$  
\end{notat}

Writing $\L$ to denote the defining ideal of $\S(E)$, with the presentation of $E$ in (\ref{pdim1 pres}) note that $\L=(\ell_1,\ldots,\ell_{n-e})$ where $[\ell_1\ldots \ell_{n-e}] = [T_1\ldots T_n]\cdot \varphi$. Moreover, the defining ideal of $\S(E_i)$ is precisely $\L_i=(\ell_1,\ldots, \ell_{n-e-i})$ as $E_i$ is presented by (\ref{E_i presentation}) and $\varphi_i$ is a submatrix of $\varphi$. Hence there is a chain of inclusions
$$0=\L_{n-e} \subsetneq \L_{n-e-1}\subsetneq \cdots\cdots \subsetneq \L_1 \subsetneq \L_0 = \L$$
where the generators of the ideals $\L_i$ are well understood.

Unfortunately, the ideals $\J_i$ are not as simple. However, we have a chain of inclusions
$$0=\J_{n-e} \subsetneq \J_{n-e-1}\subsetneq \cdots\cdots \subsetneq \J_1 \subsetneq \J_0 = \J,$$
and we note that, if $R$ is a domain, for each $i$ the Rees ring $\R(E_i)$ is a domain of dimension $d+e+i$, hence each $\J_i$ is a prime ideal of height $n-e-i$. Moreover, each successive quotient $\J_{i}/\J_{i+1}$ fits into the short exact sequence
$$0\longrightarrow \J_{i}/\J_{i+1} \longrightarrow \R(E_{i+1}) \longrightarrow\R(E_i) \longrightarrow 0.$$

The quotients $\J_{i}/\J_{i+1}$ have the advantage of being simpler than the entire ideal $\J$, as they are prime $\R(E_{i+1})$-ideals of height 1, whenever $R$ is catenary. Moreover, if generating sets can be found for each $\J_{i}/\J_{i+1}$, then they can be lifted and combined to produce a generating set of $\J$.


\subsection{Residual intersections}

Lastly, we very briefly recall the notion of residual intersections of an ideal. This notion goes back to the work of Artin and Nagata \cite{AN72}, who informally also introduced the $G_s$ condition; indeed, if an ideal $I$ satisfies $G_s$, one can easily identify an optimal generating set for a residual intersection of $I$ (see, e.g., \cite[1.6]{U94}). In the present paper, we aim to describe the structure of certain quotients $\J_{i}/\J_{i+1}$ in terms of residual intersections, for which we require very few technical aspects of the subject. Hence, we omit much of the unnecessary background material, and refer the curious reader to \cite{HU88,HU90} for a more rigorous treatment of the matter.

\begin{defn}[{\cite[1.1]{HU88}}] \label{defResInt}
Let $R$ be a Cohen-Macaulay local ring and $I$ an $R$-ideal. For $s$ an integer with $s\geq \hgt I$, a proper ideal $J=\a:I$ is an \textit{$s$-residual intersection} of $I$ if $\a = (a_1,\ldots,a_s) \subseteq I$ and $\hgt J\geq s$.
\end{defn}

In particular, we restrict our attention to residual intersections of ideals generated by regular sequences. Not only do these ideals possess strong properties, their generation and resolutions are also well known \cite{BKM90}. 

\begin{thm}\label{thmResInt}
Let $R$ be a Cohen-Macaulay local ring and let $I=(x_1,\ldots,x_g)$ be an ideal with $x_1,\ldots,x_g$ an $R$-regular sequence. Let $J=\a:I$ be an $s$-residual intersection of $I$ for some $\a=(a_1,\ldots,a_s) \subseteq I$.
\begin{itemize}
    \item[{\rm(a)}] {\rm(}\cite[1.4 and 1.5]{HU90}{\rm)} $R/J$ is a Cohen-Macaulay ring.    
    \item[{\rm(b)}] {\rm(}\cite[1.5 and 1.8]{HU90}{\rm)} If $B$ is a $g\times s$ matrix with $[a_1 \ldots a_s] = [x_1\ldots x_g]\cdot B$, then $J =\a + I_g(B)$.
\end{itemize}
\end{thm}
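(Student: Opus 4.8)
The plan is to establish the elementary inclusion $\a + I_g(B) \subseteq J$ directly, and then to obtain part (a) together with the reverse inclusion $J \subseteq \a + I_g(B)$ at once by reducing to the \emph{generic} residual intersection and specializing; the hypothesis $\hgt J \ge s$ is what makes the specialization legitimate. (For part (a) alone one can alternatively invoke the general Cohen--Macaulayness theorem for residual intersections of strongly Cohen--Macaulay ideals satisfying $G_s$: the Koszul complex on the regular sequence $x_1,\ldots,x_g$ resolves $R/I$ with homology concentrated in degree zero, so $I$ is strongly Cohen--Macaulay, and every prime over $I$ already has height $\ge g \ge \mu(I_\p)$, so $I$ satisfies $G_\infty$.)

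For the inclusion $\a + I_g(B)\subseteq J$: clearly $\a \subseteq \a:I = J$, so it is enough to see $I_g(B)\subseteq J$. Let $\delta$ be the maximal minor of $B$ on columns $j_1 < \cdots < j_g$ and $B'$ the corresponding $g\times g$ submatrix, so that $[a_{j_1}\ldots a_{j_g}] = [x_1\ldots x_g]\cdot B'$. Multiplying on the right by the adjugate of $B'$ gives $\delta\, x_t \in (a_{j_1},\ldots,a_{j_g}) \subseteq \a$ for every $1\le t\le g$, hence $\delta I \subseteq \a$, i.e. $\delta \in \a:I = J$. Thus $\a + I_g(B) \subseteq J$.

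For the reverse inclusion and for (a), I would pass to the universal situation: adjoin indeterminates $u_{ij}$ ($1\le i\le s$, $1\le j\le g$) to $R$, set $R_0 = R[u_{ij}]$, $a_i^0 = \sum_j u_{ij}x_j$, $\a_0 = (a_1^0,\ldots,a_s^0)$, and let $B_0$ be the $g\times s$ matrix $(u_{ij})^{\mathrm t}$, so $[a_1^0\ldots a_s^0] = [x_1\ldots x_g]\cdot B_0$ over $R_0$. For this generic data the conclusion is known: $\a_0 : I R_0$ has height $s$, it equals $\a_0 + I_g(B_0)$, and $R_0/(\a_0 + I_g(B_0))$ is Cohen--Macaulay of codimension $s$ — the Cohen--Macaulayness and the codimension are read off the explicit free resolution of $R_0/(\a_0+I_g(B_0))$ of \cite{BKM90}, and the identification with the colon ideal follows from genericity. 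One then specializes along the $R$-algebra map $R_0 \to R$ sending each $u_{ij}$ to a choice of coefficients with $a_i = \sum_j u_{ij}x_j$ in $R$; under it $\a_0\mapsto\a$, $B_0\mapsto B$, $IR_0\mapsto I$. The hypothesis $\hgt J = \hgt(\a:I)\ge s$ is precisely what guarantees the specialization is nondegenerate, so that the height, the Cohen--Macaulay property, and the colon identity all descend, yielding $\a:I = \a + I_g(B)$ with $R/(\a:I)$ Cohen--Macaulay of codimension $s$; this gives both (a) and (b).

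I expect the specialization step to be the main obstacle: one must verify that forming the colon $\a(-):I$ commutes with this base change and that $R_0/(\a_0+I_g(B_0)) \to R/(\a+I_g(B))$ does not collapse — which is exactly the content of the residual-intersection specialization lemmas of Huneke--Ulrich, and is where $\hgt J\ge s$ is used essentially. As a complementary observation that locates the difficulty: for $\p\not\supseteq I$ the $x_i$ are units in $R_\p$, so the Cramer computation above forces $I_g(B)_\p\subseteq\a_\p$ and hence $(\a+I_g(B))_\p = \a_\p = (\a:I)_\p$; thus $J/(\a+I_g(B))$ is supported inside $V(I)$, and the real content of the theorem is that this $I$-torsion vanishes. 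Given the Cohen--Macaulayness of $R/(\a+I_g(B))$, this comes down to showing that no height-$s$ prime minimal over $\a + I_g(B)$ contains $I$, which again rests on the residual-intersection structure and the standard bound $\hgt(I+J)\ge s+1$.
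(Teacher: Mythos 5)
This statement is not proved in the paper at all: it is quoted verbatim from Huneke--Ulrich \cite[1.4, 1.5, 1.8]{HU90}, so there is no in-paper argument to compare yours against. That said, your sketch accurately reconstructs the architecture of the cited proof. The Cramer's-rule inclusion $\a+I_g(B)\subseteq \a:I=J$ is complete and correct as written, and the reduction to the generic residual intersection --- whose defining ideal $\a_0+I_g(B_0)$ is resolved by the explicit complex of \cite{BKM90}, with Cohen--Macaulayness and codimension $s$ read off from that resolution, followed by specialization $u_{ij}\mapsto b_{ij}$ justified by the hypothesis $\hgt(\a:I)\ge s$ --- is exactly the strategy of \cite{HU90}. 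Two caveats. First, as a self-contained proof your proposal is incomplete at precisely the point you flag: the assertion that the height bound makes the specialization ``nondegenerate,'' so that the \cite{BKM90} complex remains acyclic after specialization and the colon ideal commutes with the base change, is the entire content of \cite[1.5]{HU90}; deferring it to ``the specialization lemmas of Huneke--Ulrich'' is circular if the goal is to prove the Huneke--Ulrich theorem, though it is a fair account of where the proof lives. (Your closing reduction --- that given Cohen--Macaulayness of $R/(\a+I_g(B))$ one only needs that no height-$s$ minimal prime of $\a+I_g(B)$ contains $I$ --- is the right way to finish, but the bound $\hgt(I+J)\ge s+1$ you invoke there is itself a nontrivial part of the residual-intersection package rather than a ``standard bound'' one gets for free.) Second, a small imprecision: for $\p\not\supseteq I$ it is only \emph{some} $x_i$ that is a unit in $R_\p$, not all of them; this still yields $\delta\in\a_\p$ from $\delta x_i\in\a$, and in any case $(\a:I)_\p=\a_\p$ there because $I_\p=R_\p$, so the conclusion you draw is unaffected.
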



\section{Modules with weak residual conditions}\label{sec: Two forms of the matrix}

We now begin our treatment of the Rees algebra of an $R$-module $E$ with projective dimension one satisfying $G_s$ for $s<d=\dim R$. We remark that we supplement the weakening of the condition $G_d$ with a strict rank condition on a presentation matrix $\varphi$ of $E$, similar to the settings of \cite{CPW,Nguyen17}. We also note that for $s<d-1$, this rank assumption gives rise to two cases which must be treated separately. For the duration of the paper, we adopt the following setting.

\begin{set}\label{rank1setting}
    Let $R=k[x_1,\ldots,x_d]$ be a standard-graded polynomial ring over a field $k$, with $\m = (x_1,\ldots,x_d)$ and $d\geq 3$. Let $s$ be an integer with $2 \leq s \leq d-1$. Let $E$ be a $R$-module of projective dimension one and $\rank E=e$ minimally generated by $\mu(E) = n\geq d+e$  elements,  satisfying the following assumptions:
    \begin{itemize}
        \item[(i)] The module $E$ has a presentation matrix $\varphi$ consisting of linear entries with $I_1(\varphi) = \m$.
        \item[(ii)] After a possible change of coordinates, the matrix $\varphi$ has rank 1 modulo $\p=(x_{1},\ldots,x_s)$.
        \item[(iii)] The module $E$ satisfies $G_{s}$, but not $G_{s+1}$.
    \end{itemize}
\end{set}

Condition (ii) is the aforementioned rank condition and is certainly the most strict of the assumptions above. However, we note that this condition holds automatically if $s=d-1$ and $n=d+e$ \cite[4.4]{CPW}. Additionally, notice that as $E$ satisfies $G_2$, it is torsion-free. As a consequence, modules of rank one as in \Cref{rank1setting} are isomorphic to notable ideals.

\begin{rem} \label{G2P prop}
Under the assumptions of \Cref{rank1setting}, $E$ has rank $e=1$ if and only if $E$ is isomorphic to a perfect ideal of grade two. 
\end{rem}

\begin{proof}
Let $E$ be a module as in \Cref{rank1setting} with rank $e=1$. Since $E$ satisfies $G_s$ for some $s\geq 2$, it is torsion-free, and hence isomorphic to an $R$-ideal $I$. Moreover, as $E\cong I$ has projective dimension one, from (\ref{sec 3 pdim1 presentation}) it follows that $I$ is presented by a $n\times (n-1)$ matrix $\varphi$. Moreover, as $I$ satisfies $G_2$ we have that $\hgt I_{n-1}(\varphi) = \hgt \fitt_{1}(I) \geq 2$ and the claim follows from the Hilbert-Burch theorem \cite[20.15]{Eisenbud}. 
\end{proof}

In particular, any result obtained in the present setting for Rees algebras of modules with projective dimension one can be applied to Rees algebras of perfect ideals of grade two.

The rank condition (ii) above has a profound implication on the structure of the matrix $\varphi$, as discussed in the following crucial remark. 

\begin{rem}\label{rank1shapes}
With the assumptions of \Cref{rank1setting}, the module $E$ has minimal free resolution of the form
\begin{equation}\label{sec 3 pdim1 presentation}
0 \longrightarrow R^{n-e} \overset{\varphi}{\longrightarrow}R^n \longrightarrow E \longrightarrow 0.    
\end{equation}
Moreover, with conditions (i) and (ii) above, after possible row and column operations, the matrix $\overline{\varphi}$ obtained from $\varphi$ modulo $(x_1, \ldots, x_s)$ has nonzero entries $x_{s+1}, \ldots, x_{d}$ which are concentrated all in one row or all in one column. Hence, there are two possible shapes of the matrix $\varphi$.  Namely, after row and column operations and a possible change of coordinates, the matrix $\varphi$ may be taken to have either the form $\varphi=\varphi_C$ or $\varphi = \varphi_R$ where

\begin{equation}\label{matrixshapes}
    \varphi_C \coloneq \left(\!\begin{array}{ccc|c}
   & & & *\\
   & \varphi'_C &   & \vdots \\
   & &  & *\\
   \hline
    * &\cdots& * & x_{s+1}\\
    \vdots & & \vdots & \vdots \\
    * &\cdots& * & x_{d}
\end{array}\!\right), \qquad  \qquad 
  \varphi_R \coloneq  \left(\!\begin{array}{ccc|ccc}
   & & &  *&\cdots & *\\
   & \varphi'_R&  &  \vdots & & \vdots \\
   & & & *&\cdots & *\\
   \hline
    * &\cdots& * &x_{s+1}&\cdots & x_d
\end{array}\!\right).
\end{equation}
In particular, the entries of the $(n-d+s)\times (n-e-1)$ submatrix $\varphi'_C$ and the $(n-1)\times (n-e-d+s)$ submatrix $\varphi'_R$, as well as all the $*$ entries, belong to the subring $A= k[x_1,\ldots,x_s] \subset R$. 
\end{rem}

As the indeterminates $x_{s+1},\ldots,x_d$ are either concentrated in a single column or a single row of $\varphi$, for the remainder of this paper we refer to these two cases as the \textit{column setting} and the \textit{row setting}, respectively. The next two sections are dedicated to a more thorough treatment of each case, highlighting substantial differences in the structure of the Rees algebra. Nevertheless, the two cases share several common features, which we discuss in the remainder of this section. 

\begin{rem}\label{s=d-1 remark}
Notice that if $s=d-1$, then the two possible shapes for the matrix $\varphi$ in (\ref{matrixshapes}) coincide and \Cref{rank1setting} recovers the setting of \cite[sec. 4]{CPW}. Moreover, following \Cref{G2P prop}, if $s=d-1$ and additionally $e=1$ and $d=3$, then \Cref{rank1setting} reduces to the setting of \cite{Nguyen17}. Likewise, when $s=d-1$ and also $e=1$ and $n=d+1$, one recovers the setting of \cite{Nguyen14}. 
\end{rem}

In particular, in our study of the Rees ring $\R(E)$ our main interest will be in the case when $s<d-1$, which will produce novel results, even in the rank-one case. We begin by identifying the \textit{non-linear type locus} of the module $E$, i.e. the set of prime ideals $\q\in \spec(R)$ for which $E_\q$ is not of linear type. We note that the following proposition holds more generally, outside of the assumptions of 
\Cref{rank1setting}. 

\begin{prop}\label{LTlocus}
Let $R$ be a Noetherian ring and $E$ a finitely generated $R$-module of rank $e$ and projective dimension one. If $E$ satisfies $G_s$ but not $G_{s+1}$, then $E_\q$ is of linear type for all primes $\q\in \spec(R)$ with $\q \notin V(\fitt_{s+e-1}(E))$.
\end{prop}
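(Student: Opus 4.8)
The plan is to reduce to a local statement about linear type and apply the standard criterion that, for a module of projective dimension one, linear type is governed by the $G_\infty$-type bound on the Fitting ideals together with a depth/rank count (essentially the criterion of Avramov, or the $F_1$/acyclicity-type results for symmetric algebras; cf.\ the well-known result that a module of projective dimension one satisfying $G_\infty$ is of linear type). Concretely, fix a prime $\q \notin V(\fitt_{s+e-1}(E))$; equivalently $\mu(E_\q) \le s+e-1$, so that $\dim R_\q$ could still be large but the local number of generators is controlled. I would localize the whole setup at $\q$, so that $E_\q$ has projective dimension at most one over the Noetherian local ring $R_\q$, rank $e$, and $\mu(E_\q) \le s+e-1$.

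First I would observe that it suffices to prove: if $F$ is a module of projective dimension $\le 1$ over a Noetherian local ring $(S,\n)$, with rank $e$ and $\mu(F) \le s+e-1$, and $F$ satisfies $G_s$ (which it inherits from $E$ by localization, since $\fitt_i$ localizes), then $F$ is of linear type. The key point is that the constraint $\mu(F) \le s+e-1$ means that only the Fitting ideals $\fitt_i(F)$ with $i \le s+e-2$ are proper, and $G_s$ gives exactly the bound $\hgt \fitt_i(F) \ge i-e+2$ for all $e \le i \le s+e-2$ (Remark~\ref{defGs}). Combined with $\fitt_i(F) = S$ for $i \ge s+e-1$, this says $F$ satisfies $G_\infty$ \emph{as a module over $S$} — there is simply no higher Fitting ideal left to impose a condition on. Then the classical linear-type criterion for modules of projective dimension one satisfying $G_\infty$ (the module analogue, via a generic Bourbaki ideal or directly via the Artin–Nagata / $\mathrm{sliding\ depth}$-type argument, of the result that perfect grade-two ideals with $G_\infty$ are of linear type) yields that $F$ is of linear type, i.e.\ $\L_\q = \J_\q$, which is the claim.

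The main obstacle, and the step requiring the most care, is making precise that the hypothesis $\mu(E_\q) \le s+e-1$ really does upgrade $G_s$ to $G_\infty$ \emph{after localizing}: one must check that the primes $\p \supseteq \q$ contributing to the $G_t$ conditions for $t > s$ are handled because the relevant Fitting ideal is already the unit ideal in $R_\q$, so the height bound $\hgt \fitt_i(E_\q) \ge i-e+2$ holds vacuously for $i \ge s+e-1$. One subtlety is that $\dim R_\q$ is unbounded, so "$G_\infty$" here genuinely means the Fitting-ideal reformulation of Remark~\ref{defGs}, not a statement about generators at primes of every dimension — but that reformulation is exactly what the linear-type theorem consumes. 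A second, milder point is citing the correct linear-type input in the module setting: for $\rank E = 1$ this is just the Hilbert–Burch structure plus the classical result for perfect grade-two ideals; for general $e$ one passes to a generic Bourbaki ideal (as in \cite{SUV03}) or invokes the module version directly, noting that projective dimension one makes the relevant complex (the "$\mathcal{Z}$-complex" or Eagon–Northcott–type resolution of the symmetric algebra) acyclic precisely under the $G_\infty$ Fitting bounds. Once linear type is established locally at every such $\q$, there is nothing further to do, since linear type at $\q$ is by definition the equality $\L_\q = \J_\q$.
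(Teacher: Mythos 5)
Your argument is correct and is essentially the paper's own proof: both reduce to observing that $\q \notin V(\fitt_{s+e-1}(E))$ forces $\mu(E_\q)\le s+e-1$, so that the localized module satisfies $G_\infty$ (the conditions beyond $G_s$ being vacuous because the relevant Fitting ideals become the unit ideal), and then both invoke the Avramov-type linear-type criterion for modules of projective dimension one satisfying $G_\infty$ \cite[Prop.~3 and 4]{Avramov81}. The only cosmetic difference is that you phrase the upgrade via Fitting ideals while the paper phrases it via $\mu(E_\p)\le\min\{\hgt\p,s\}+e-1$ for $\p\subseteq\q$; these are equivalent by \Cref{defGs}.
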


\begin{proof}
Notice that $V(\fitt_{s+e-1}(E)) = \{\q \in \spec(R) \,|\, \mu(E_\q) \geq s+e\}$ by \Cref{fitt}. Thus for any prime ideal $\q$ not contained in this set, we have that $\mu(E_\q)\leq s+e-1$. Since $E$ satisfies $G_s$, by transitivity of localization, for any $\p\subseteq \q$ it follows that $\mu\big((E_\q)_{\p R_\q}\big) = \mu (E_\p) \leq \min\{\hgt \p ,s\}+e-1$, so $E_\q$ satisfies $G_\infty$ and is hence of linear type by \cite[Prop. 3 and 4]{Avramov81}. 
\end{proof}

Thus, we are reduced to investigate the minimal primes of $\fitt_{s+e-1}(E)$. To this end, we recall a short lemma to assist us. 

\begin{lemma}[{\cite[2.4]{CPW}}]\label{GsFitt}
Let $R$ be a Noetherian ring and $E$ a finitely generated $R$-module of rank $e$ satisfying $G_s$ but not $G_{s+1}$. Then $\hgt \fitt_{s+e-2}(E)= \hgt \fitt_{s+e-1}(E) =s$.
\end{lemma}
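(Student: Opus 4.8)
The plan is to convert both hypotheses into statements about heights of Fitting ideals via \Cref{defGs}, and then to deduce the two equalities from a short squeeze. First, since $E$ has rank $e$ and satisfies $G_s$, \Cref{defGs} gives $\hgt\fitt_i(E)\geq i-e+2$ for every index $i$ with $e\leq i\leq s+e-2$. Applying this at the largest such index, $i=s+e-2$ (which lies in the allowed range since $s\geq 2$), yields the lower bound $\hgt\fitt_{s+e-2}(E)\geq s$.

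Next I would exploit the failure of $G_{s+1}$. By \Cref{defGs}, the fact that $E$ does \emph{not} satisfy $G_{s+1}$ means that the inequality $\hgt\fitt_i(E)\geq i-e+2$ fails for at least one index $i$ in the range $e\leq i\leq s+e-1$. But the first step shows this inequality does hold for every $i$ with $e\leq i\leq s+e-2$, so the only index at which it can fail is $i=s+e-1$. Consequently $\hgt\fitt_{s+e-1}(E)<(s+e-1)-e+2=s+1$, that is, $\hgt\fitt_{s+e-1}(E)\leq s$.

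Finally, I would invoke the monotonicity of Fitting ideals, $\fitt_{s+e-2}(E)\subseteq\fitt_{s+e-1}(E)$ --- equivalently $V(\fitt_{s+e-1}(E))\subseteq V(\fitt_{s+e-2}(E))$ by \Cref{fitt}(c) --- which forces $\hgt\fitt_{s+e-2}(E)\leq\hgt\fitt_{s+e-1}(E)$. Stringing the three facts together gives \[ s\;\leq\;\hgt\fitt_{s+e-2}(E)\;\leq\;\hgt\fitt_{s+e-1}(E)\;\leq\;s, \] so both heights equal $s$, as claimed. There is no serious obstacle in this argument; the only point that requires care is that the failure of $G_{s+1}$ is pinned down to the single index $i=s+e-1$, which is exactly what the hypothesis $G_s$ guarantees, once one checks that the index $s+e-2$ is indeed in range (i.e. that $s\geq 2$, as is assumed throughout the paper).
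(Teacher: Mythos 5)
Your argument is correct, and since the paper only cites this lemma from \cite[2.4]{CPW} without reproducing a proof, your squeeze --- lower bound at $i=s+e-2$ from $G_s$, upper bound at $i=s+e-1$ from the failure of $G_{s+1}$, and the inclusion $\fitt_{s+e-2}(E)\subseteq\fitt_{s+e-1}(E)$ in between --- is exactly the standard (and essentially the only) argument for this statement. The one point you flag, that $s\geq 2$ is needed so the index $s+e-2$ is in range and the failure of $G_{s+1}$ is pinned to $i=s+e-1$, is indeed the only delicate spot, and it is covered by the paper's standing hypothesis $2\leq s\leq d-1$.
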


Now returning to the assumptions of \Cref{rank1setting}, we show that $\fitt_{s+e-1}(E)$ has a \textit{unique} minimal prime. This fact is crucial for the following arguments and, unlike the previous items, we note that this does require the strength of \Cref{rank1setting}. We present some examples in \Cref{sec: Examples} of the behavior of $\R(E)$ outside of this setting, when this phenomenon does not occur.

\begin{prop}\label{minFitt}
 With the assumptions of \Cref{rank1setting}, $\p=(x_1,\ldots,x_{s})$ is the unique minimal prime of $\fitt_{s+e-1}(E) = I_{n-s-e+1}(\varphi)$.  
\end{prop}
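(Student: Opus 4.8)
The plan is to show two things: first, that $\p = (x_1, \ldots, x_s)$ actually contains $\fitt_{s+e-1}(E) = I_{n-s-e+1}(\varphi)$, so that $\p$ is among the primes lying over this ideal; and second, that $\hgt \fitt_{s+e-1}(E) = s = \hgt \p$, so that $\p$, being a prime of the correct height containing the ideal, is a \emph{minimal} prime; finally, that it is the \emph{only} one. For the first point, I would work modulo $\p$. By \Cref{rank1shapes}, after the allowed row/column operations and change of coordinates, $\varphi$ has the form $\varphi_C$ or $\varphi_R$ in (\ref{matrixshapes}), and reducing modulo $\p = (x_1, \ldots, x_s)$ kills every entry of $\varphi'_C$, $\varphi'_R$, and every $*$, leaving $\overline{\varphi}$ with nonzero entries only $x_{s+1}, \ldots, x_d$ sitting in a single column (column case) or single row (row case). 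In either case $\overline{\varphi}$ has rank exactly $1$ over the domain $R/\p = k[x_{s+1}, \ldots, x_d]$, so every $2 \times 2$ minor of $\overline{\varphi}$ vanishes; since $n - s - e + 1 \geq 2$ (because $n \geq d + e$ and $s \leq d-1$ give $n - s - e + 1 \geq d + e - (d-1) - e + 1 = 2$), all $(n-s-e+1)$-minors of $\overline{\varphi}$ are zero, i.e. $I_{n-s-e+1}(\varphi) \subseteq \p$.

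For the height computation, \Cref{GsFitt} (applied with the hypotheses of \Cref{rank1setting}, which include $G_s$ but not $G_{s+1}$) gives $\hgt \fitt_{s+e-1}(E) = s$. Combined with $\fitt_{s+e-1}(E) \subseteq \p$ and $\hgt \p = s$, this forces $\p$ to be a minimal prime of $\fitt_{s+e-1}(E)$: any chain of primes realizing the height of the ideal inside $\p$ cannot properly descend past a prime of height $s$, and $\p$ itself has height $s$, so $\p$ must be minimal over the ideal. It remains to rule out any \emph{other} minimal prime $\q \neq \p$. Here is where the strength of \Cref{rank1setting}, rather than merely $G_s$, is essential. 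Suppose $\q$ is a minimal prime of $\fitt_{s+e-1}(E)$; then $\hgt \q = s$ as well (a minimal prime of an unmixed-height situation, or at least $\hgt \q \geq s$ by the structure of Fitting ideals under $G_s$ combined with $\hgt \fitt_{s+e-2}(E) = s$ from \Cref{GsFitt}), and $\mu(E_\q) \geq s + e$ by \Cref{fitt}(c). I would then argue that a prime $\q$ of height $s$ with $\mu(E_\q) \geq s+e$ must contain $I_1(\varphi) = \m$ — which is impossible since $\hgt \m = d > s$ — \emph{unless} $\q = \p$. The mechanism: localize at $\q$ and use that $\varphi$ has linear entries with $I_1(\varphi) = \m$; if $\q \not\supseteq (x_1, \ldots, x_s)$ after the coordinate change, then some $x_i$ with $i \leq s$ is a unit in $R_\q$, and one can perform an invertible row/column reduction stripping a generator, contradicting $\mu(E_\q) \geq s + e$ against the rank-$1$-mod-$\p$ structure; alternatively one invokes \Cref{LTlocus}, which already says $E_\q$ is of linear type — hence $\mu(E_\q) \leq \hgt \q + e - 1 = s + e - 1$ — for every prime $\q \notin V(\fitt_{s+e-1}(E))$, so the only primes in $V(\fitt_{s+e-1}(E))$ of height $\leq s$ are those that "see" the degeneracy, which the rank condition (ii) pins down to be exactly $\p$.

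The main obstacle I anticipate is the uniqueness step: showing there is no second minimal prime. The containment $\fitt_{s+e-1}(E) \subseteq \p$ and the height bound are quick, but excluding a rogue minimal prime $\q$ requires genuinely exploiting the very rigid shape in (\ref{matrixshapes}) — specifically, that \emph{every} entry of $\varphi$ outside the distinguished row/column lies in $A = k[x_1, \ldots, x_s]$, and that modulo $\p$ the rank drops all the way to $1$. I would likely make this precise by showing directly that $I_{n-s-e+1}(\varphi) + \p$-primary-decomposition forces $\sqrt{\fitt_{s+e-1}(E)} \subseteq \p$: expanding an $(n-s-e+1)$-minor along the columns/rows coming from $\varphi'$ (which only involve $x_1, \ldots, x_s$) versus the distinguished strip shows every such minor lies in $\p \cdot (\text{something}) + (\text{minors of } \varphi' \text{ of size } \geq 2 \text{ in the } x_i)$, and a careful bookkeeping — together with $G_s$ controlling the smaller Fitting ideals $\fitt_{s+e-2}(E), \ldots$ — yields $\sqrt{\fitt_{s+e-1}(E)} = \p$, which gives both minimality and uniqueness at once. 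That radical computation, though not deep, is the technically delicate part and the place where assumption (ii) genuinely earns its keep.
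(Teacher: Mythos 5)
The first two-thirds of your argument are fine and match what the paper needs: reducing mod $\p$ kills all minors of size $\geq 2$ because $\rank\overline{\varphi}=1$ over the domain $R/\p$, and $n-s-e+1\geq 2$, so $\fitt_{s+e-1}(E)\subseteq\p$; combined with $\hgt\fitt_{s+e-1}(E)=s=\hgt\p$ from \Cref{GsFitt}, the prime $\p$ is minimal over the Fitting ideal. The problem is the uniqueness step, which is the entire content of the proposition, and there your proposal does not close. The three mechanisms you offer are each insufficient: the claim that a height-$s$ prime $\q$ with $\mu(E_\q)\geq s+e$ must equal $\p$ is a restatement of the conclusion, and ``stripping a generator'' after inverting some $x_i$ does not produce a contradiction by itself; \Cref{LTlocus} says nothing about primes inside $V(\fitt_{s+e-1}(E))$, as you half-acknowledge; and the final ``careful bookkeeping yields $\sqrt{\fitt_{s+e-1}(E)}=\p$'' is exactly the statement to be proved, deferred rather than established. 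Note also that a rogue minimal prime $\q$ need not have height $s$ a priori, so even the setup of your exclusion argument is shaky.

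The missing idea, which is how the paper actually does it, is to pass to the submatrix $\varphi_1$ (delete the last column in the column case; delete the last row in the row case) whose entries all lie in $A=k[x_1,\ldots,x_s]$. Laplace expansion gives $I_{n-s-e+2}(\varphi)\subseteq I_{n-s-e+1}(\varphi_1)$, and \Cref{GsFitt} supplies $\hgt I_{n-s-e+2}(\varphi)=s$ as well, so $I_{n-s-e+1}(\varphi_1)$ is a homogeneous ideal of height $s$ in the $s$-dimensional ring $A$, hence $\p$-primary in $A$ and therefore has $\p$ as its unique minimal prime in $R$. Since $I_{n-s-e+1}(\varphi_1)\subseteq I_{n-s-e+1}(\varphi)\subseteq\p$, every prime over $\fitt_{s+e-1}(E)$ contains $\p$, and $\p$ itself lies over it, so $\p$ is the unique minimal prime. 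This is where the second equality in \Cref{GsFitt} (the height of $\fitt_{s+e-2}(E)$, not just $\fitt_{s+e-1}(E)$) and the rigidity of the shape in (\ref{matrixshapes}) are genuinely used; your sketch never isolates a sub-ideal of minors living entirely in $A$, which is the step that pins the radical down to $\p$.
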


\begin{proof}
Since $E$ satisfies $G_{s}$ but not $G_{s+1}$, we have $\hgt I_{n-s-e+2}(\varphi) = \hgt I_{n-s-e+1}(\varphi) = s$ by \Cref{GsFitt}. We now consider two cases, depending on the two possible shapes of the matrix $\varphi$ in (\ref{matrixshapes}).

For the column setting, let $\varphi =\varphi_C$ as in (\ref{matrixshapes}). Following the notation of \Cref{sec:preliminaries}, let $\varphi_1$ denote the $n \times (n-e-1)$ submatrix of $\varphi$ obtained by deleting the last column of $\varphi$. Then, the entries of $\varphi_1$ are linear forms in $A=k[x_1, \ldots,x_s]$, and also we have $I_{n-s-e+2}(\varphi) \subseteq I_{n-s-e+1}(\varphi_1) \subseteq (x_1, \ldots, x_s) = \p$. Since $\hgt I_{n-s-e+2}(\varphi) = s = \hgt \p$, it follows that $\hgt I_{n-s-e+1}(\varphi_1)= s$. Thus, $I_{n-s-e+1}(\varphi_1)$ is a $\p$-primary ideal in $A$, and so $\p$ is its only minimal prime in $A$ and hence in $R$. On the other hand, since $I_{n-s-e+1}(\varphi_1) \subseteq I_{n-s-e+1}(\varphi) \subseteq \p$ and $\hgt I_{n-s-e+1}(\varphi) =s$, it follows that  $\p$ is the unique minimal prime of $I_{n-s-e+1}(\varphi)$ too.

For the row setting, let $\varphi =\varphi_R$ as in (\ref{matrixshapes}). A similar argument as above yields the same result, by considering the submatrix obtained by deleting the last row of $\varphi$. 
\end{proof}

With the non-linear type locus of $E$ determined in \Cref{LTlocus}, we may now introduce our first description of the defining ideal $\J$ of the Rees ring $\R(E)$.

\begin{prop}\label{Jasat}
 With the assumptions of \Cref{rank1setting}, $\J$ is a prime ideal of height $n-e$. Moreover, we have that $\J=\L:\p^\infty$ where $\p= (x_1,\ldots,x_s)$ as before. 
\end{prop}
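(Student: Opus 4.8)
The plan is to establish the two claims of \Cref{Jasat} essentially independently, drawing on the structural results already in hand. The height and primeness of $\J$ were actually already recorded in \Cref{sec:column stripping}: since $R$ is a domain, $\R(E)$ is a domain of Krull dimension $d+e$, so $\J = \ker(\pi)$ is a prime ideal with $\hgt \J = \dim R[T_1,\ldots,T_n] - \dim\R(E) = (d+n) - (d+e) = n-e$. For the main assertion $\J = \L : \p^\infty$, I would argue by comparing $\R(E)$ and $\S(E)$ locally at every prime of $R$, using that $\L = [\J]_1$ with $\S(E) = R[\T]/\L$ and $\R(E) = R[\T]/\J$, and that $\J/\L$ is precisely the $R$-torsion submodule $\tau(\S(E))$.

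First I would show $\L : \p^\infty \subseteq \J$. Since $\J$ is prime and $\p \not\subseteq \J$ (indeed $\J \cap R = 0$ because $R \hookrightarrow \R(E)$, while $\p \ne 0$), any element $f$ with $f\cdot g \in \L \subseteq \J$ for some $g \in \p^m$ must lie in $\J$ — this is immediate from primeness. For the reverse containment $\J \subseteq \L : \p^\infty$, equivalently $\p^\infty(\J/\L) = 0$ in $\S(E) = R[\T]/\L$, I would check that $\J/\L = \tau(\S(E))$ is annihilated by a power of $\p$ by localizing: for every prime $\q$ of $R$ with $\p \not\subseteq \q$, I claim $(\J)_\q = (\L)_\q$, i.e.\ $\S(E)_\q = \R(E)_\q$. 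This is exactly the statement that $E_\q$ is of linear type. By \Cref{minFitt}, $\p$ is the \emph{unique} minimal prime of $\fitt_{s+e-1}(E)$, so $\p \not\subseteq \q$ forces $\q \notin V(\fitt_{s+e-1}(E))$, and then \Cref{LTlocus} gives that $E_\q$ is of linear type, as desired. Hence the torsion $\J/\L$ is supported only on $V(\p)$; since $\J/\L$ is a finitely generated module over the Noetherian ring $R[\T]$, it is annihilated by $\p^N$ for some $N$, i.e.\ $\J \subseteq \L : \p^N \subseteq \L : \p^\infty$.

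Combining the two inclusions yields $\J = \L : \p^\infty$. The main subtlety — and the place where I would be most careful — is the step identifying $V(\operatorname{ann}(\J/\L))$ with (a subset of) $V(\p)$: one needs that $E_\q$ linear type implies $(\J/\L)_\q = 0$, which follows because linear type means precisely $\S(E_\q) = \R(E_\q)$ and localization commutes with forming symmetric and Rees algebras, so $(\S(E)/\tau)_\q = (\S(E)_\q)/\tau_\q$ and $\tau_\q = 0$. The only other point requiring the strength of \Cref{rank1setting} (rather than just $G_s$) is the uniqueness of the minimal prime of $\fitt_{s+e-1}(E)$, but that is already supplied by \Cref{minFitt}, so no extra work is needed here. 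Everything else is a routine assembly of \Cref{LTlocus}, \Cref{minFitt}, and the domain property of $\R(E)$.
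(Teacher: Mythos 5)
Your proposal is correct and follows essentially the same route as the paper: primeness and height from the domain property of $\R(E)$, the inclusion $\L:\p^\infty\subseteq\J$ from primeness of $\J$ together with $\p\nsubseteq\J$, and the reverse inclusion by using \Cref{LTlocus} and \Cref{minFitt} to see that $\J/\L$ is supported only at primes containing $\p$, hence killed by a power of $\p$. The extra care you take in justifying $\p\nsubseteq\J$ via $\J\cap R=0$ and in passing from support on $V(\p)$ to annihilation by $\p^N$ is welcome but does not change the argument.
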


\begin{proof}
 The initial statement is clear as $\R(E)\cong R[T_1, \ldots, T_n]/\J$ and $\R(E)$ is a domain of dimension $d+e$. For the second statement, notice that $\L:(x_1,\ldots,x_s)^\infty \subseteq \J$ as $\J$ is prime, $\L\subseteq \J$ and $(x_1,\ldots,x_s)\nsubseteq \J$.  To prove the reverse containment, consider the quotient $\A = \J/\L$. It suffices to show that $\A$ is annihilated by some power of $\p= (x_1,\ldots,x_s)$. From \Cref{LTlocus} and \Cref{minFitt}, it follows that $\A$ is supported only at primes containing $\p$, hence $\p$ is the only minimal prime of $\ann\A$. Thus $\p = \sqrt{\ann \A}$, and so indeed a power of $\p$ annihilates $\A$.
\end{proof}

The description of $\J$ in \Cref{Jasat} will be crucial to our study. Our search for an explicit generating set of $\J$ now begins by extracting information from a Jacobian dual matrix $B(\varphi)$ of $\varphi$. We note, however, as there are two possible forms of the presentation $\varphi$ in (\ref{matrixshapes}), there are two possible shapes of its Jacobian dual matrix, with respect to $x_1,\ldots,x_d$. Namely $B(\varphi)$ coincides with either $B(\varphi_R)$ or $B(\varphi_C)$ where

\begin{equation}\label{JD rank1 shapes}
B(\varphi_C) = \left(\!\begin{array}{ccc|c}
   & &    & \bullet            \\
   & B'_C &  &       \vdots      \\
   & &      &       \bullet      \\
\hline
    0&\cdots&0    &  T_{n-d+s+1}  \\
    \vdots & &\vdots&  \vdots           \\
    0&\cdots&0       &T_n   
\end{array}\!\right),
\qquad \quad
B(\varphi_R) =  \left(\!\begin{array}{ccc|ccc}
& &                 & & & \\
   & B'_R &              &  & \psi &\\
   & &                  & &  &  \\
   \hline
    0&\cdots&0          & T_n\\
    \vdots & &\vdots    && \ddots         \\
    0&\cdots&0          &&&T_n
\end{array}\!\right)
\end{equation} 
which can be seen from (\ref{JD equation}) and the two possible shapes of $\varphi$ in (\ref{matrixshapes}). 

Here $B'_C$ is an $s\times (n-e-1)$ matrix with linear entries in $k[T_1,\ldots,T_n]$, and the $\bullet$ entries of $B(\varphi_C)$ belong to $k[T_{1},\ldots,T_{n-d+s}]$. Moreover, $B'_R$ is an $s\times (n-e-d+s)$ matrix and $\psi$ is an $s\times (d-s)$ matrix, both consisting of linear entries in $k[T_1,\ldots,T_n]$. 
Additionally, the bottom right block of $B(\varphi_R)$ is $T_n\cdot I_{d-s}$ where $I_{d-s}$ is the $(d-s)\times (d-s)$ identity matrix. 

\begin{rem}\label{s=d-1 JD remark}
Notice that if $s=d-1$, then the two possible shapes for $B(\varphi)$ in (\ref{JD rank1 shapes}) coincide with 
$$ B(\varphi) =  \left(\!\begin{array}{ccc|c}
   & &    & \bullet            \\
   & B' &  &       \vdots      \\
   & &      &       \bullet      \\
\hline
    0&\cdots&0    &  T_{n}  \\
\end{array}\!\right) $$
and one has $[x_1\ldots x_{d}]\cdot B'= [\ell_1\ldots\ell_{n-e-1}]$, as observed in \cite{CPW}. Moreover, in this case, the matrix $B'$ completely determines the nonlinear equations of the Rees algebra, which coincide with the \textit{fiber equations} of $\R(E)$ \cite[4.11, 4.12]{CPW}. In the proceeding sections, we will see that this is not always true when $s<d-1$. 
\end{rem}

As we will see in \Cref{sec:column section} and \Cref{sec:row section}, the two possible shapes of $\varphi$ and $B(\varphi)$ produce two very different generating sets for the defining ideal $\J$ of $\R(E)$ (compare \Cref{Defining Ideal Column Case} and \Cref{Defining Ideal Row Case}), yielding very different behaviors in terms of the fiber type property and the analytic spread of $E$ (compare \Cref{F(I)defideal Column} and \Cref{F(I)defideal Row}). To understand the differences between the two cases, it will be useful to relate the submatrices $B'_C$ and $B'_R$ of (\ref{JD rank1 shapes}) to the submatrices $\varphi_i$ and the ideals $\L_i$ introduced in \Cref{Strip Setting} and \Cref{LiJinotation}. Notice that $B'_C$ satisfies the equation
\begin{equation} \label{B'C matrix equation}
    [T_1 \ldots T_n] \cdot \varphi_1= [\ell_1\ldots\ell_{n-e-1}] =  [x_1\ldots x_{s}]\cdot B'_C
\end{equation}
and so $B'_C$ is precisely the Jacobian dual of the matrix $\varphi_1$, with respect to the sequence $x_1, \ldots, x_s$. Similarly, with $B'_R$ we have
\begin{equation} \label{B'R matrix equation}
   [T_1 \ldots T_n] \cdot \varphi_{d-s} = [\ell_1\ldots\ell_{n-e-(d-s)}] =  [x_1\ldots x_{s}]\cdot B'_R
\end{equation}
and so $B'_R$ is the Jacobian dual of the matrix $\varphi_{d-s}$ with respect to the sequence $x_1, \ldots, x_s$.

Recall that $\L=(\ell_1,\ldots,\ell_{n-e})$, where $[\ell_1\ldots\ell_{n-e}]=[T_1 \ldots T_n] \cdot \varphi$, is the defining ideal of $\S(E)$. Notice that with (\ref{B'C matrix equation}) and (\ref{B'R matrix equation}), depending on whether $\varphi= \varphi_C$ or $\varphi= \varphi_R$, we have $\L +I_{s}(B'_C) \subseteq \J$ or $\L +I_{s}(B'_R) \subseteq \J$ due to Cramer's rule, since $\J$ is a prime ideal. With this, we introduce the following result, which will be a crucial technical step towards determining the defining ideal $\J$ of $\R(E)$.

\begin{prop}\label{B'hgt}
With the assumptions of \Cref{rank1setting} and $B'_C$ and $B'_R$ as in (\ref{JD rank1 shapes}), the following hold.
\begin{itemize}
    \item[{\rm(a)}] If $\varphi=\varphi_C$, then $I_{s}(B'_C)$ is a Cohen-Macaulay prime ideal of height $n-s-e$.
    \item[{\rm(b)}] If $\varphi=\varphi_R$, then $I_{s}(B'_R)$ is a Cohen-Macaulay prime ideal of height $n-e-d+1$. 
\end{itemize}    
\end{prop}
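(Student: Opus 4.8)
The plan is to reduce both parts to a single statement about a module over the polynomial subring $A:=k[x_1,\dots,x_s]$, and then to combine a dimension count for the zero locus of the Jacobian dual with the Eagon--Northcott theorem and the residual-intersection results of \Cref{thmResInt}. By (\ref{matrixshapes}), when $\varphi=\varphi_C$ the submatrix $\varphi_1$ (delete the last column) has all its entries in $A$, and when $\varphi=\varphi_R$ the submatrix $\varphi_{d-s}$ (delete the last $d-s$ columns) has all its entries in $A$. Write $\psi$ for this submatrix, viewed as a map of free $A$-modules, and set $G:=\coker\psi$, so that $E_1=G\otimes_A R$ or $E_{d-s}=G\otimes_A R$. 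Then $\psi$ is injective, $G$ has projective dimension one over $A$ with $\mu(G)=n$, and $\rank_A G=e+1$ or $\rank_A G=e+d-s$; writing $r:=\rank_A G$, the matrix $B'_C$ (resp. $B'_R$) is precisely a Jacobian dual $B(\psi)$ of $\psi$ with respect to $x_1,\dots,x_s$ by (\ref{B'C matrix equation}) and (\ref{B'R matrix equation}), that is, an $s\times(n-r)$ matrix of linear forms in $k[T_1,\dots,T_n]$. Since Fitting ideals commute with the base change $A\hookrightarrow R$ and heights are preserved by this polynomial extension, \Cref{Gs Right to Left} together with \Cref{defGs} shows that $G$ satisfies $G_s$ over $A$; as $\dim A=s$, this is equivalent to $G$ being of linear type on $\spec(A)\setminus\{\m_A\}$, where $\m_A=(x_1,\dots,x_s)$, by \cite{Avramov81} as in \Cref{LTlocus}. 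The claim becomes that $I_s(B(\psi))$ is a Cohen--Macaulay prime of $k[T_1,\dots,T_n]$ of height $n-r-s+1$, which specializes to $n-s-e$ when $r=e+1$ and to $n-e-d+1$ when $r=e+d-s$.

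For the height I would use an incidence variety. As $\psi$ has linear entries, evaluating $\psi$ at $\lambda\in k^{s}$ gives an $n\times(n-r)$ scalar matrix $\psi(\lambda)$, and a direct check gives $\lambda^{t}B(\psi)(\underline t)=\underline t\cdot\psi(\lambda)$ for all $\underline t\in k^{n}$. Hence $V(I_s(B(\psi)))$ is the first projection of
\[
Z=\{\,(\underline t,[\lambda])\in\mathbb A^{n}\times\mathbb P^{s-1}\;:\;\underline t\cdot\psi(\lambda)=0\,\},
\]
so $\hgt I_s(B(\psi))=n-\dim V(I_s(B(\psi)))\ge n-\dim Z$. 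Projecting $Z$ onto $\mathbb P^{s-1}$, the fibre over $[\lambda]$ is the left kernel of $\psi(\lambda)$, of dimension $n-\rank\psi(\lambda)$, and the locus where $\rank\psi(\lambda)\le\rho$ is $V(I_{\rho+1}(\psi))\cap\mathbb P^{s-1}$. Since $I_{\rho+1}(\psi)=\fitt_{n-\rho-1}(G)$ is homogeneous and the $G_s$ condition on $G$ forces $\hgt\fitt_{j}(G)\ge\min\{j-r+2,\,s\}$ for every $j\ge r$, a short bookkeeping over this stratification yields $\dim Z\le s+r-1$, hence $\hgt I_s(B(\psi))\ge n-r-s+1$. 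Since $s\le n-r$, the reverse inequality is the classical bound on the height of the ideal of maximal minors of an $s\times(n-r)$ matrix, so $\hgt I_s(B(\psi))=n-r-s+1$. Consequently the Eagon--Northcott complex of $B(\psi)$ is a finite free resolution of $k[T]/I_s(B(\psi))$ of length equal to its grade, so $k[T]/I_s(B(\psi))$ is perfect, in particular Cohen--Macaulay.

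It remains to see that $I_s(B(\psi))$ is prime, which I would deduce by identifying it with $\J_G\cap k[T]$, where $\J_G$ is the defining ideal of the domain $\R_A(G)=A[T]/\J_G$ --- hence a prime of height $n-r$. Because $G$ is of linear type on the punctured spectrum of $A$, the quotient $\J_G/\L_G$ (with $\L_G=(\ell_1,\dots,\ell_{n-r})$ the defining ideal of $\S_A(G)$) is supported on $V((x_1,\dots,x_s)A[T])$, so $\J_G=\L_G:(x_1,\dots,x_s)^{\infty}$ and $\J_G$ is the only minimal prime of $\L_G$ away from $V((x_1,\dots,x_s)A[T])$. Moreover $\L_G\subseteq(x_1,\dots,x_s)A[T]$ since $\ell_j=\sum_{l}x_l\,B(\psi)_{lj}$, and Cramer's rule gives $I_s(B(\psi))\subseteq\L_G:(x_1,\dots,x_s)A[T]$. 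Combining this inclusion with the height computation, any minimal prime of $\L_G:(x_1,\dots,x_s)A[T]$ containing $(x_1,\dots,x_s)$ must contain $(x_1,\dots,x_s)+I_s(B(\psi))$ and hence have height $\ge s+(n-r-s+1)=n-r+1$, while the only minimal prime not containing $(x_1,\dots,x_s)$ is $\J_G$, of height $n-r$. Thus $\L_G:(x_1,\dots,x_s)A[T]$ is a proper $(n-r)$-residual intersection of the complete intersection $(x_1,\dots,x_s)A[T]$: by \Cref{thmResInt}(b) it equals $\L_G+I_s(B(\psi))$, and by \Cref{thmResInt}(a) it is Cohen--Macaulay, so unmixed of height $n-r$; unmixedness excludes the minimal primes of height $n-r+1$, leaving $\J_G$ as the unique associated prime, so $(x_1,\dots,x_s)$ contains a nonzerodivisor modulo $\L_G:(x_1,\dots,x_s)A[T]$, whence $\L_G:(x_1,\dots,x_s)A[T]=\L_G:(x_1,\dots,x_s)^{\infty}=\J_G$. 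Intersecting the equality $\J_G=\L_G+I_s(B(\psi))$ with $k[T]$, and using $(x_1,\dots,x_s)A[T]\cap k[T]=0$, gives $I_s(B(\psi))=\J_G\cap k[T]$, which is prime.

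The crux is the height estimate $\hgt I_s(B(\psi))\ge n-r-s+1$ in the second step: this is where the residual condition $G_s$, transported from $E$ to $G$ over $A$, does the real work, through the Fitting-ideal height bounds that control the strata of the incidence variety $Z$. Everything else --- the reduction to $A$, the passage from the height to Cohen--Macaulayness via Eagon--Northcott, and the identification with $\J_G\cap k[T]$ via \Cref{thmResInt} together with linear type on the punctured spectrum --- is then essentially formal.
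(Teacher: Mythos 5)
Your argument is correct, but it takes a genuinely different route from the paper. The paper's proof is short and citation-driven: starting from \Cref{Jasat} (that $\L:(x_1,\ldots,x_s)^\infty$ is prime of height $n-e$), it applies \cite[2.2]{BM16} once (resp.\ $d-s$ times) to peel off the last column(s) and conclude that $(\ell_1,\ldots,\ell_{n-e-1}):(x_1,\ldots,x_s)^\infty$ (resp.\ $(\ell_1,\ldots,\ell_{n-e-d+s}):(x_1,\ldots,x_s)^\infty$) is prime of the expected height, then invokes \cite[2.4]{BM16} to transfer primality and height to $I_s(B')$, with Cohen--Macaulayness from Eagon--Northcott exactly as you do. You instead work intrinsically over $A=k[x_1,\ldots,x_s]$ with the module $G=\coker\psi$: you extract the height of $I_s(B(\psi))$ directly from the $G_s$ condition via the incidence correspondence $Z\subseteq\mathbb A^n\times\mathbb P^{s-1}$ (your stratification is right --- the generic stratum gives $\dim\le (s-1)+r$ and the degenerate strata give $\dim\le s+r-2$), and you obtain primality by showing $\L_G:(x_1,\ldots,x_s)$ is an $(n-r)$-residual intersection equal to $\J_G=\L_G+I_s(B(\psi))$ and contracting to $k[T]$. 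This last step essentially re-derives \cite[4.11]{SUV03}, which the paper only invokes later (in \Cref{R(E1) column case}); your version is therefore more self-contained and yields that structural statement as a byproduct, at the cost of being considerably longer. Two small points to tidy up: the set-theoretic identification $V(I_s(B(\psi)))=p_1(Z)$ should be carried out after base change to $\overline{k}$ (harmless, since heights of the relevant determinantal and Fitting ideals are unchanged under field extension), and \Cref{thmResInt} is stated for local rings while you apply it in the graded polynomial ring $A[T_1,\ldots,T_n]$ --- the paper does the same elsewhere, but a word acknowledging the graded/local transfer would make the step airtight.
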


\begin{proof}
By (\ref{JD rank1 shapes}), $B'_C$ is an $s \times (n-e-1)$ matrix with linear entries in $k[T_1,\ldots,T_n]$. Moreover,  $[\ell_1\ldots\ell_{n-e-1}] = [x_1\ldots x_{s}]\cdot B'_C$ by (\ref{B'C matrix equation}). Now, as $\L:(x_1,\ldots,x_{s})^\infty$ is a prime ideal of height $n-e$ by \Cref{Jasat}, from \cite[2.2]{BM16} it follows that $(\ell_1,\ldots,\ell_{n-e-1}):(x_1,\ldots,x_{s})^\infty$ is a prime ideal of height $n-e-1$. Hence, \cite[2.4]{BM16} implies that $I_{s}(B'_C)$ is a prime ideal with $\hgt I_{s}(B'_C) = (n-e-1)-s+1 = n-e-s$. Since this is the maximal possible height by \cite[Thm.~1]{EN62}, the Cohen-Macaulayness of $I_{s}(B'_C)$ then follows from \cite[A2.13]{Eisenbud}. This proves (a).

For (b), notice that according to (\ref{JD rank1 shapes}) and (\ref{B'R matrix equation}), $B'_R$ is an $s \times (n-e-d+s)$ matrix with linear entries in $k[T_1,\ldots,T_n]$ and $[\ell_1\ldots\ell_{n-e-d+s}] = [x_1\ldots x_{s}]\cdot B'_R$. As $\L:(x_1,\ldots,x_{s})^\infty$ is a prime ideal of height $n-e$, by successively applying \cite[2.2]{BM16} $d-s$ times, we obtain that $(\ell_1,\ldots,\ell_{n-e-d+s}):(x_1,\ldots,x_{s})^\infty$ is a prime ideal of height $n-e-d+s$. Thus, $I_{s}(B'_R)$ is a prime ideal with $\hgt I_{s}(B'_R) = (n-e-d+s)-s+1 = n-e-d+1.$ 
Again, this is the maximal possible height by \cite[Thm.~1]{EN62}, hence $I_{s}(B'_R)$ is Cohen-Macaulay by \cite[A2.13]{Eisenbud}. 
\end{proof}

In the next sections, we will combine the height calculation of \Cref{B'hgt} with an argument allowing us to determine the defining ideal of $\R(E)$ from that of $\R(E_1)$, where $E=\coker \varphi_1$ as in \Cref{Strip Setting}. 
The following proposition guarantees that the assumptions of \Cref{rank1setting} are preserved when passing from $E$ to $E_1$, in either case of \Cref{rank1shapes}.

\begin{prop}\label{Gs not Gs+1}
The module $E_1$ is an $R$-module of projective dimension one satisfying $G_s$, but not $G_{s+1}$.    
\end{prop}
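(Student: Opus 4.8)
The plan is to verify the three conditions of \Cref{rank1setting} for $E_1$ one at a time, using the fact that $E_1 = \coker \varphi_1$, where $\varphi_1$ is obtained from $\varphi$ by deleting one column. First, projective dimension one is immediate from \Cref{Gs Right to Left}(a), since $\varphi_1$ is still injective and $E_1$ has rank $e+1$; in particular the short exact sequence $0 \to R^{n-e-1} \overset{\varphi_1}{\to} R^n \to E_1 \to 0$ is a minimal free resolution of $E_1$ provided the entries of $\varphi_1$ lie in $\m$, which is clear since $\varphi_1$ is a submatrix of $\varphi$. Thus $E_1$ is minimally generated by $n$ elements, and $n \geq d + e \geq d + (e+1) - 1$, so the generation hypothesis of \Cref{rank1setting} holds for $E_1$ with rank $e+1$. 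The presentation matrix $\varphi_1$ has linear entries, so condition (i) is inherited provided $I_1(\varphi_1) = \m$. In the column setting $\varphi = \varphi_C$, deleting the last column removes exactly the column containing $x_{s+1}, \ldots, x_d$, so one must check that the remaining $*$ entries (which lie in $A = k[x_1, \ldots, x_s]$) still generate $(x_1, \ldots, x_s)$; and since the deleted column only contributes entries in $(x_1, \ldots, x_d)$, one has $I_1(\varphi_1) \supseteq$ the image of $I_1(\varphi)$ mod that column — here I would argue directly from the shape that $I_1(\varphi_1) = (x_1, \ldots, x_s) = \p$, which is the relevant ambient ideal, but to match \Cref{rank1setting}(i) literally one wants $I_1(\varphi_1) = \m$, so a bit of care is needed: in the row setting $\varphi = \varphi_R$, deleting the last \emph{column} from the right block keeps other columns of that block carrying the $x_{s+1}, \ldots, x_d$ entries, so $I_1(\varphi_1) = \m$ is retained. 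This asymmetry is something the proof must address honestly, and I expect the authors reconcile it by noting that in the relevant applications (passing $E \leadsto E_1$) the ambient polynomial ring can be taken to be $A = k[x_1, \ldots, x_s]$ or that the condition $I_1(\varphi_1) \subseteq \m$ with $I_1(\varphi_1)$ of the correct height suffices for the arguments that follow.

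Next, condition (ii): modulo $\p = (x_1, \ldots, x_s)$, the matrix $\overline{\varphi_1}$ is a submatrix of $\overline{\varphi}$, obtained by deleting one column. Since $\overline{\varphi}$ has rank $1$, $\overline{\varphi_1}$ has rank at most $1$; it has rank exactly $1$ as long as the deleted column was not the only one supporting the nonzero entries. In the column setting this is a genuine concern — the nonzero entries of $\overline{\varphi_C}$ sit in the last column, which is precisely the one deleted — so $\overline{\varphi_1} = \overline{(\varphi_C)_1}$ would have rank $0$, i.e. $E_1$ would satisfy $G_\infty$ on the nose... but that contradicts the next part. So the resolution must be that $E_1$ still fails $G_{s+1}$ for a different reason, or that the rank condition (ii) for $E_1$ is stated with respect to a possibly different $\p'$; I would look carefully at whether the intended claim is that $E_1$ satisfies \Cref{rank1setting} \emph{with the same $s$ and after a further change of coordinates}. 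The cleanest route is: the Fitting-ideal computation in \Cref{minFitt} shows $\fitt_{s+e-1}(E)$ has unique minimal prime $\p$, and $\fitt_{(s)+(e+1)-1}(E_1) = \fitt_{s+e}(E_1) = I_{n-s-e-1}(\varphi_1) \supseteq I_{n-s-e}(\varphi)$-type containments let one transport the height/uniqueness statements. So the main technical engine is the same as in \Cref{Gs Right to Left}(b) together with \Cref{GsFitt}: from $E$ satisfying $G_s$ (hence so does $E_1$ by \Cref{Gs Right to Left}(b)) and from a Fitting-ideal height computation showing $\hgt \fitt_{s+(e+1)-1}(E_1) = s < s+1$, conclude $E_1$ fails $G_{s+1}$.

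Concretely, for condition (iii): $E_1$ satisfies $G_s$ by \Cref{Gs Right to Left}(b). For the failure of $G_{s+1}$, by \Cref{defGs} it suffices to exhibit $i$ with $e+1 \leq i \leq (s+1)+(e+1)-2 = s+e$ such that $\hgt \fitt_i(E_1) < i - (e+1) + 2 = i - e + 1$. Take $i = s+e$: then $\fitt_{s+e}(E_1) = I_{n-s-e-1}(\varphi_1)$, and since $E$ fails $G_{s+1}$, \Cref{GsFitt} gives $\hgt \fitt_{s+e-1}(E) = \hgt I_{n-s-e+1}(\varphi) = s$; combined with the containment $I_{n-s-e+1}(\varphi) \subseteq I_{n-s-e}(\varphi_1) \subseteq \ldots$ coming from Laplace expansion along the deleted column (each $(n-s-e+1)$-minor of $\varphi$ is an $A$-linear combination of $(n-s-e)$-minors of $\varphi_1$, up to the $x_{s+1},\ldots,x_d$ contributions), one deduces $\hgt I_{n-s-e}(\varphi_1) \leq s = s - (e+1) + 1 + e - s + \ldots$; chasing the arithmetic, $\fitt_{s+e}(E_1)$ ends up with height $\leq s < s + e - e + 1 - 1$... — I would redo this index bookkeeping carefully, but the upshot is that the single non-$G$ prime $\p$ of \Cref{minFitt} persists for $E_1$, so $E_1$ fails $G_{s+1}$ at $\p$ with $\hgt \p = s$. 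The expected main obstacle is precisely this index and change-of-coordinates bookkeeping — verifying that deleting a column does not destroy $I_1 = \m$ in the column case and that the rank-$1$-mod-$\p$ structure genuinely survives (possibly after reabsorbing the deleted column's data or re-choosing coordinates), while the $G_s$ part is free from \Cref{Gs Right to Left} and the $G_{s+1}$-failure is a short Fitting-ideal height estimate built on \Cref{GsFitt} and \Cref{minFitt}.
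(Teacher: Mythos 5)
Your proposal has the right skeleton (projective dimension one and $G_s$ come for free from \Cref{Gs Right to Left}; the failure of $G_{s+1}$ should be a Fitting-ideal height estimate), but the argument you give for the crucial step does not work, and a large part of the write-up attacks the wrong target. The proposition does not claim that $E_1$ satisfies all of \Cref{rank1setting}; it only claims projective dimension one and ``$G_s$ but not $G_{s+1}$.'' So your extended discussion of whether $I_1(\varphi_1)=\m$ and whether $\overline{\varphi_1}$ still has rank one modulo $\p$ is beside the point --- and indeed both conditions genuinely fail for $\varphi_1$ in the column case, which is not a contradiction. In fact the observation you flag as problematic (``$\overline{\varphi_C}$ has its nonzero entries in the deleted column, so $\overline{\varphi_1}$ has rank $0$ mod $\p$'') is precisely the proof in the column case: all entries of $\varphi_1$ lie in $A=k[x_1,\ldots,x_s]$, and since $n-s-e\geq d-s\geq 1$ the ideal $\fitt_{s+e}(E_1)=I_{n-s-e}(\varphi_1)$ is proper, hence contained in $(x_1,\ldots,x_s)$ and of height at most $s$, which by \Cref{defGs} (with $\rank E_1=e+1$) is exactly the failure of $G_{s+1}$. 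Your inference that rank $0$ mod $\p$ would force $E_1$ to satisfy $G_\infty$ is backwards.

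The concrete gap in your ``condition (iii)'' paragraph is a direction-of-inequality error. To show $E_1$ fails $G_{s+1}$ you need an \emph{upper} bound $\hgt\fitt_{s+e}(E_1)\leq s$, but the containment $\fitt_{s+e-1}(E)\subseteq\fitt_{s+e}(E_1)$ coming from Laplace expansion only gives $\hgt\fitt_{s+e}(E_1)\geq\hgt\fitt_{s+e-1}(E)=s$, a \emph{lower} bound; this is the same containment that drives \Cref{Gs Right to Left}(b) and it can only transfer the $G$-conditions upward, never their failure. (You also have an off-by-one: $\fitt_{s+e}(E_1)=I_{n-s-e}(\varphi_1)$, not $I_{n-s-e-1}(\varphi_1)$.) The missing ingredient is the structural fact about where the entries of the relevant submatrix live, and this also exposes the second omission: in the row case $\varphi_1$ still contains the variables $x_{s+1},\ldots,x_{d-1}$, so the ``entries in $A$'' argument does not apply to $\varphi_1$ directly. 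The paper instead shows that $E_{d-s}$ fails $G_{s+1}$ (its presentation $\varphi_{d-s}$ does have all entries in $A$, and $\fitt_{d+e-1}(E_{d-s})=I_{n-d-e+1}(\varphi_{d-s})$ is proper, hence of height at most $s$) and then descends to $E_1$ via the contrapositive of \Cref{Gs Right to Left}(b). Your proposal never addresses the row case, so as written the proof is incomplete in both branches.
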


\begin{proof}
From \Cref{Gs Right to Left}, we know that $E_1$ satisfies $G_s$, hence we only need to show that $E_1$ does not satisfy $G_{s+1}$. As $E_1$ has rank $e+1$, it suffices to show that $\hgt \fitt_{s+e}(E_1)\leq s$. We distinguish two cases, depending on the two possible shapes for the matrix $\varphi$ as in (\ref{matrixshapes}).

In the column setting, let $\varphi=\varphi_C$ as in (\ref{matrixshapes}) and write $\fitt_{s+e}(E_1)= I_{n-s-e}(\varphi_1)$, with $\varphi_1$ as in \Cref{Strip Setting}. Notice that $n-s-e \geq d-s \geq 1$, as $n\geq d+e$ and $s\geq d-1$ by assumption. Thus, $I_{n-s-e}(\varphi_1)$ is not the unit ideal and is hence contained in $(x_1,\ldots,x_s)$, as the entries of $\varphi_1$ belong to $A=k[x_1, \ldots, x_s]$. Thus, $\hgt \fitt_{s+e}(E_1)\leq s$ and the claim follows.

In the row setting, let $\varphi=\varphi_R$ as in (\ref{matrixshapes}). It is enough to show that $E_{d-s}$ does not satisfy $G_{s+1}$, since then neither does $E_1$, by \Cref{Gs Right to Left}. To prove this, notice that $E_{d-s}$ is an $R$-module of rank $e+d-s$ and $\fitt_{d+e-1}(E_{d-s}) = I_{n-d-e+1}(\varphi_{d-s})$. Similarly to the previous case, it follows that $I_{n-d-e+1}(\varphi_{d-s}) \subseteq (x_1,\ldots,x_s)$, as the entries of $\varphi_{d-s}$ are in $A$ and this is not the unit ideal. Thus, $\hgt \fitt_{d+e-1}(E_{d-s}) \leq s$ and so $E_{d-s}$ does not satisfy $G_{s+1}$, as required.
\end{proof}

\begin{rem}
    The proof of \Cref{Gs not Gs+1} shows that, in the row setting when $\varphi = \varphi_R$ as in (\ref{matrixshapes}), actually all modules $E_i$ for $0 \leq i \leq d-s$ satisfy $G_s$ but not $G_{s+1}$. That is, the maximal index $s$ such that $E_i$ satisfies $G_s$ is constant in this range. 
\end{rem}


\section{The column case}\label{sec:column section}

Throughout this section, we let $R$ and $E$ be as in \Cref{rank1setting} and we assume that the presentation matrix $\varphi$ of $E$ is in column form, $\varphi=\varphi_C$ as in (\ref{matrixshapes}). 
Our primary task is to determine the defining ideal $\J$ of the Rees ring $\R(E)$ through means of successive approximations of this algebra, as described in \Cref{sec:column stripping}. As we will see, the fact that a subset of variables is concentrated in one column of the presentation matrix $\varphi$ makes this setting quite amenable to this approximation method. Indeed, the first algebra $\R(E_1)$ alone will provide enough information to determine $\J$. Our main setting throughout is as follows.

\begin{set}\label{Column Case Setting}
Adopt the assumptions of \Cref{rank1setting} and assume that, after the appropriate change of coordinates and row and column operations, the presentation matrix $\varphi$ has the shape $\varphi =\varphi_C$ in (\ref{matrixshapes}).
\end{set}

We omit the subscript in $\varphi_{C}$ and instead write $\varphi$ throughout to simplify the notation. With this, the matrix $\varphi$ and its Jacobian dual, with respect to $x_1,\ldots,x_d$, are

\begin{equation}\label{Column Section - phi and B(phi)}
\varphi =  \left(\!\begin{array}{ccc|c}
   & & & *\\
   & \varphi' &   & \vdots \\
   & &  & *\\
   \hline
    * &\cdots& * & x_{s+1}\\
    \vdots & & \vdots & \vdots \\
    * &\cdots& * & x_{d}
\end{array}\!\right) \qquad \text{and} \qquad 
B(\varphi)=\left(\!\begin{array}{ccc|c}
   & &    & \bullet            \\
   & B' &  &       \vdots      \\
   & &      &       \bullet      \\
\hline
    0&\cdots&0    &  T_{n-d+s+1}  \\
    \vdots & &\vdots&  \vdots           \\
    0&\cdots&0       &T_n   
\end{array}\!\right)
\end{equation}
where the entries of $\varphi'$ and the $*$ entries belong to the subring $A=k[x_1,\ldots,x_s]$, and the entries of $B(\varphi)$ belong to $k[T_1,\ldots,T_n]$. We also recall that 
\begin{equation}\label{Column Case JD equation}
    [T_1\ldots T_n]\cdot \varphi = [\ell_1\ldots\ell_{n-e}] = [x_1\ldots x_d] \cdot B(\varphi)
\end{equation}
where $\L=(\ell_1,\ldots,\ell_{n-e})$ is the defining ideal of the symmetric algebra $\S(E)$, as in \Cref{sec:preliminaries}.

As noted, we aim to approximate, in a sense, the Rees ring $\R(E)$ with a well-understood algebra surjecting onto it. By employing the technique of successive approximations, we take this algebra to be the Rees ring of $E_1$, where $E_1$ is the $R$-module defined in \Cref{sec:column stripping}. We begin by making some observations on the module $E_1$, and then we will consider its Rees ring.

\begin{prop}\label{Column Setting - phi_1 and E_1 prop}
With the assumptions of \Cref{Column Case Setting}, let $\varphi_1$ denote the submatrix of $\varphi$ as in \Cref{sec:column stripping}, and  let $E_1$ denote the cokernel of $\varphi_1$. The module $E_1$ has projective dimension one and satisfies $G_s$, but not $G_{s+1}$, both as an $R$-module and as an $A$-module, where $A=k[x_1,\ldots, x_s]$. 
\end{prop}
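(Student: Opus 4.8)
The statement has two halves, and the first is already done for us: the claims that $E_1 = \coker\varphi_1$ has projective dimension one and satisfies $G_s$ but not $G_{s+1}$ \emph{as an $R$-module} are precisely the content of \Cref{Gs not Gs+1}. So the plan is to reduce the \emph{$A$-module} statement, $A = k[x_1,\ldots,x_s]$, to this $R$-module statement by exploiting the defining feature of the column case. By the shape of $\varphi = \varphi_C$ in \Cref{rank1shapes}, the submatrix $\varphi_1$ obtained by deleting the last column of $\varphi$ has \emph{all} of its entries in $A$; in particular it is a genuine matrix (of size $n\times(n-e-1)$, with $n-e-1\geq 1$ since $n\geq d+e$). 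We may therefore read $\varphi_1$ as a presentation matrix over $A$, set $E_1^A := \coker_A\varphi_1$, and observe that the $R$-module $E_1$ is the base change of $E_1^A$ along the polynomial — hence faithfully flat — extension $A \subseteq R = A[x_{s+1},\ldots,x_d]$, i.e. $E_1 = E_1^A \otimes_A R$. Everything then comes down to transferring the relevant invariants across this extension.

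First I would record the homological data over $A$. Since $\varphi$ is injective, so is its column submatrix $\varphi_1$ over $R$ by linear algebra (as in the proof of \Cref{Gs Right to Left}); because $R$ is faithfully flat over $A$, it follows that $\varphi_1$ is already injective over $A$. Thus $0 \to A^{n-e-1} \overset{\varphi_1}{\longrightarrow} A^n \to E_1^A \to 0$ is an $A$-free resolution, so $\mathrm{pd}_A E_1^A \leq 1$, and additivity of rank along it gives $\rank_A E_1^A = e+1$, matching $\rank_R E_1$. Moreover $E_1^A$ cannot be $A$-free: otherwise $E_1 = E_1^A\otimes_A R$ would be $R$-free, contradicting $\mathrm{pd}_R E_1 = 1$ from \Cref{Gs not Gs+1}. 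Hence $\mathrm{pd}_A E_1^A = 1$ (alternatively, one may invoke faithfully flat base change for projective dimension directly).

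Next I would handle the residual conditions. The Fitting ideals are computed from the same matrix over either ring, so $\fitt_i^R(E_1) = \fitt_i^A(E_1^A)\,R$ for every $i$. For the polynomial extension $A \subseteq R$ one has $\hgt_R(\a R) = \hgt_A \a$ for every ideal $\a$ of $A$: the minimal primes of $\a R$ are exactly the $\p R$ with $\p$ minimal over $\a$, and $\hgt_R(\p R) = \hgt_A\p$ since $R/\p R = (A/\p)[x_{s+1},\ldots,x_d]$ and polynomial rings over a field are catenary and equidimensional. Consequently $\hgt\fitt_i^A(E_1^A) = \hgt\fitt_i^R(E_1)$ for all $i$, and since $\rank_A E_1^A = \rank_R E_1 = e+1$, the numerical criterion of \Cref{defGs} shows that $E_1^A$ satisfies $G_s$ (resp. fails $G_{s+1}$) over $A$ if and only if $E_1$ does over $R$. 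By \Cref{Gs not Gs+1} the latter holds, completing the argument.

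The only point requiring genuine care — though it is entirely standard commutative algebra — is the comparison of heights of the Fitting ideals under the base change $A\subseteq R$, together with the preservation of injectivity of $\varphi_1$ over $A$; once these are in place, the proposition follows formally from \Cref{rank1shapes}, \Cref{defGs}, and \Cref{Gs not Gs+1}, with no further manipulation of the explicit matrices needed.
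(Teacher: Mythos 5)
Your argument is correct and follows essentially the same route as the paper's (much terser) proof: cite \Cref{Gs not Gs+1} for the $R$-module statement, observe that the column shape of $\varphi$ forces the entries of $\varphi_1$ into $A$ so that everything can be read over $A$, and then transfer projective dimension and the $G_s$/not-$G_{s+1}$ conditions along the polynomial extension $A\subseteq R$. Your version merely makes explicit what the paper compresses into ``$\spec(A)\subset\spec(R)$,'' namely the equality of Fitting ideal heights under the faithfully flat extension; this is a faithful elaboration rather than a different proof.
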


\begin{proof}
 The first claim follows from \Cref{Gs not Gs+1}. Notice that since $\varphi$ has the shape in (\ref{Column Section - phi and B(phi)}), the entries of $\varphi_1$ belong to the subring $A$, hence $E_1$ is an $A$-module of projective dimension one. Moreover, since $\spec(A)\subset \spec(R)$, $E_1$ satisfies $G_s$, but not $G_{s+1}$, as an $A$-module as well.
\end{proof}

With this, we may describe the Rees ring of $E_1$ as both an $A$-module and an $R$-module. 

\begin{prop}\label{R(E1) column case}
The Rees algebra of $E_1$, as an $R$-module, is $\R(E_1)\cong R[T_1,\ldots,T_n]/\J_1$ where 
$$\J_1 = \L_1:(x_1,\ldots, x_s) = \L_1 +I_s(B')$$
and $\L_1= (\ell_1,\ldots, \ell_{n-e-1})$. 
Moreover, $\R(E_1)$ is Cohen-Macaulay, both as an $R$-module and as an $A$-module.
\end{prop}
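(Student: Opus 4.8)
The plan is to identify $E_1$ with an ideal-like situation where the results of \Cref{sec:preliminaries} apply directly, and then exploit the fact that the entries of $\varphi_1$ live in the smaller ring $A = k[x_1,\ldots,x_s]$. First I would invoke \Cref{Column Setting - phi_1 and E_1 prop}: $E_1$ is an $A$-module of projective dimension one satisfying $G_s$ but not $G_{s+1}$, and $s = \dim A$. Thus $E_1$, as an $A$-module, satisfies $G_{\dim A}$ in the classical sense (the residual condition over $A$). Over $A$, the module $E_1$ is (after passing to a generic Bourbaki ideal, or directly since $\operatorname{rank} E_1 = e+1$ and the presentation has linear entries generating the maximal ideal of $A$) in exactly the situation covered by the $s = d-1$ case recorded in \Cref{s=d-1 JD remark} and \cite{CPW}; concretely, $B'$ plays the role of the matrix $B'$ there with respect to $x_1,\ldots,x_s$, since $[\ell_1\ldots\ell_{n-e-1}] = [x_1\ldots x_s]\cdot B'$ by \eqref{B'C matrix equation}. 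Hence over $A$ one gets $\R_A(E_1) \cong A[T_1,\ldots,T_n]/\J_1^A$ with $\J_1^A = \L_1^A : (x_1,\ldots,x_s) = \L_1^A + I_s(B')$, and $\R_A(E_1)$ is Cohen-Macaulay.

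Next I would pass from $A$ to $R = A[x_{s+1},\ldots,x_d]$. Since the entries of $\varphi_1$ all lie in $A$, we have $\S_R(E_1) = \S_A(E_1)\otimes_A R$ and likewise $\R_R(E_1) = \R_A(E_1)\otimes_A R$ — extension of scalars along the flat (indeed polynomial) map $A \hookrightarrow R$ commutes with the symmetric algebra and with forming the torsion-free quotient, because $A \to R$ is faithfully flat with geometrically integral fibers so torsion is preserved. Therefore $\R_R(E_1) \cong R[T_1,\ldots,T_n]/\J_1$ where $\J_1 = \J_1^A \cdot R[T_1,\ldots,T_n] = \L_1 + I_s(B')$, and $\L_1 = (\ell_1,\ldots,\ell_{n-e-1})$ (the $\ell_i$ for $i\le n-e-1$ indeed only involve $x_1,\ldots,x_s$). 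The saturation formula $\J_1 = \L_1:(x_1,\ldots,x_s)$ either descends from the analogous statement over $A$ (saturation commutes with flat base change), or can be re-derived over $R$ via \Cref{Jasat} applied to $E_1$ — note $E_1$ satisfies the hypotheses of \Cref{rank1setting} with the same $s$ by \Cref{Gs not Gs+1}, so \Cref{Jasat} gives $\J_1 = \L_1 : \p^\infty$, and since $\operatorname{ht} I_s(B') = n-e-s$ is maximal and $I_s(B')$ is prime by \Cref{B'hgt}(a), the containment $\L_1 + I_s(B') \subseteq \J_1$ is an equality of prime ideals of the same height $n-e-1$.

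Finally, for the Cohen-Macaulay statements: as an $A$-module $\R(E_1) = \R_R(E_1)$ is Cohen-Macaulay because it is a polynomial extension $\R_A(E_1)[x_{s+1},\ldots,x_d]$ of the Cohen-Macaulay ring $\R_A(E_1)$. Viewing it instead as an $R$-module (equivalently as a ring, since $R \to \R_R(E_1)$ is the structure map), Cohen-Macaulayness as a ring is equivalent to Cohen-Macaulayness as an $A$-module because depth and dimension can be computed after the finite-type flat extension, or simply because $R[T_1,\ldots,T_n]$ and $A[T_1,\ldots,T_n]$ have the same residue fields at corresponding primes; alternatively, $\L_1 + I_s(B')$ defines a quotient of $R[T_1,\ldots,T_n]$ which is $A[T_1,\ldots,T_n]/(\L_1^A+I_s(B'))$ tensored with the polynomial ring $k[x_{s+1},\ldots,x_d]$, hence Cohen-Macaulay. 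The main obstacle I anticipate is being careful that base change along $A \hookrightarrow R$ genuinely commutes with the torsion functor defining the Rees algebra — this is where faithful flatness and the geometric integrality of the fibers (they are polynomial rings over a field) must be used — and, relatedly, that no new torsion or new defining equations are introduced when the variables $x_{s+1},\ldots,x_d$ are adjoined; once this compatibility is in hand, everything else is bookkeeping with \Cref{B'hgt}(a) and \Cref{Jasat}.
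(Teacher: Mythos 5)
Your proposal follows essentially the same route as the paper: view $E_1$ as a module over $A=k[x_1,\ldots,x_s]$, where it satisfies $G_{\dim A}$ with a linear presentation of projective dimension one, apply the known structure theorem over $A$ to get $\J_1=\L_1:(x_1,\ldots,x_s)=\L_1+I_s(B')$ with $\R_A(E_1)$ Cohen--Macaulay, and then extend along the flat polynomial map $A\hookrightarrow R$, noting that $x_{s+1},\ldots,x_d$ is a regular sequence modulo $\J_1$. The only adjustment is the citation: the result needed over $A$ is \cite[4.11]{SUV03} (for modules of projective dimension one satisfying $G_{\dim A}$), not the $s=d-1$ case of \cite{CPW}, whose hypotheses include a rank condition modulo $(x_1,\ldots,x_{s-1})$ that you neither verify nor need.
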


\begin{proof}
Notice that the submatrix $B'$ is precisely the Jacobian dual of $\varphi_1$ with respect to $x_1,\ldots,x_s$, as observed in (\ref{B'C matrix equation}). Moreover, by \Cref{Column Setting - phi_1 and E_1 prop}, $E_1$ satisfies $G_s$ as an $A$-module, where $A=k[x_1,\ldots,x_s]$. Hence, from \cite[4.11]{SUV03} it follows that the Rees ring of $E_1$, as an $A$-module, is $\R_{A}(E_1) \cong A[T_1,\ldots,T_n]/\J_1$ with $\J_1 =\L_1:(x_1,\ldots, x_s) = \L_1 +I_s(B')$ and moreover, this ring is Cohen-Macaulay. 

Viewing now $E_1$ as an $R$-module, it then follows that $\R(E_1)\cong R[T_1,\ldots,T_n]/\J_1$, with $\J_1$ extended to $R[T_1,\ldots,T_n]$. Moreover, $\R(E_1)$ is Cohen-Macaulay, as $x_{s+1}, \ldots, x_d$ is a regular sequence modulo $\J_1$.
\end{proof}

We remark that, although in the previous proof we include a description of the Rees algebra of $E_1$ over the simpler ring $A$, the description of $\R(E_1)$ with $E_1$ viewed as an $R$-module is required for our purposes. Indeed, we require a map of $R$-modules $E_1\rightarrow E$ to induce the map of $R$-algebras $\R(E_1)\rightarrow\R(E)$, as in \Cref{sec:column stripping}. Recall that the kernel of this map is $\J/\J_1$, and this is a prime ideal of height one.

\begin{lemma} \label{lemmaModuloJ1col}
With the isomorphism $\R(E_1) \cong R[T_1,\ldots,T_n]/\J_1$ in \Cref{R(E1) column case}, write $\overline{\,\cdot\,}$ to denote images modulo $\J_1$. We have the following.
    \begin{itemize}
        \item[{\rm (a)}] Letting $\p=(x_1,\ldots,x_s)$, $\,\overline{\p}$ is a Cohen-Macaulay prime $\R(E_1)$-ideal of height one.
        
        \item[{\rm (b)}] The $\R(E_1)$-ideal $\,\overline{\L+I_{s}(B')}$ is Cohen-Macaulay of height one. 
    \end{itemize}
\end{lemma}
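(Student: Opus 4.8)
<br>

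The plan is to prove both statements by working over the auxiliary ring $A = k[x_1,\ldots,x_s]$, where the situation is cleaner, and then transfer everything back to $R[T_1,\ldots,T_n]$ using the fact that $x_{s+1},\ldots,x_d$ is a regular sequence on $\R(E_1)$. By \Cref{R(E1) column case}, $\R_A(E_1) \cong A[T_1,\ldots,T_n]/\J_1$ is Cohen-Macaulay, and $\R(E_1) \cong \R_A(E_1) \otimes_A R$ is a polynomial extension of it in the variables $x_{s+1},\ldots,x_d$; heights and the Cohen-Macaulay property of cyclic quotients are preserved under such an extension, so it suffices to establish the analogues of (a) and (b) for ideals of $\R_A(E_1)$.

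For part (a): in $\R_A(E_1) = A[T_1,\ldots,T_n]/\J_1$, consider the image of $\p = (x_1,\ldots,x_s)$. Since $\R_A(E_1)$ is a domain of dimension $\dim A + \rank_A E_1 = s + (e+1) = s+e+1$ and its special fiber ring is $\F_A(E_1) = \R_A(E_1)/\p\R_A(E_1)$, the analytic spread of $E_1$ as an $A$-module is at most $\mu(E_1) = n - e - 1$ but more to the point I would argue directly: $\overline{\p}$ is the defining ideal of the special fiber ring of $E_1$ over $A$, and by the theory recalled in \Cref{sec:preliminaries} (the identification $\F(E_1) \cong k[T_1,\ldots,T_n]/I_s(B')$ in the analogue of \Cref{Intro theorem - Defining Ideal of F(E)}, or rather the fact that $E_1$ over $A$ is the model case of \cite{CPW} with $s = \dim A$) one knows $\F_A(E_1)$ is Cohen-Macaulay. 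Its dimension is the analytic spread $\ell_A(E_1)$, which by the column-form structure equals $s + (e+1) - 1 = s+e$ — indeed $E_1$ over $A$ has $I_1(\varphi_1) = (x_1,\ldots,x_s)$ the maximal ideal of $A$, so $E_1$ is not free locally at $\m_A$ and $\ell_A(E_1) = \dim A + \rank E_1 - 1$ by the standard analytic spread formula for modules over a local ring whose residue field is infinite (or after passing to $A(Z)$). Hence $\hgt \overline{\p} = \dim \R_A(E_1) - \dim \F_A(E_1) = (s+e+1) - (s+e) = 1$, and $\overline{\p}$ is prime because $\R_A(E_1)$ is a domain and $x_1,\ldots,x_s \notin \J_1$ (equivalently $\F_A(E_1) \neq 0$). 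Cohen-Macaulayness of $\overline{\p}$ as an $\R_A(E_1)$-ideal follows because $\R_A(E_1)/\overline{\p} = \F_A(E_1)$ is Cohen-Macaulay and $\R_A(E_1)$ itself is Cohen-Macaulay, so the cyclic module $\R_A(E_1)/\overline{\p}$ being CM of the right dimension forces $\overline{\p}$ to be a CM ideal (unmixedness plus the CM ambient ring). Extending scalars to $R$ preserves all of this.

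For part (b): in $\R_A(E_1)$, I claim $\overline{\L + I_s(B')} = \overline{\L_0 + I_s(B'_C)}$ is, up to the passage between $\L$ and $\L_1 = (\ell_1,\ldots,\ell_{n-e-1})$, exactly the image of $\J = \L:\p^\infty$ — no, more carefully: by \Cref{Jasat} combined with \Cref{B'hgt}(a) and the structure of $B'_C$, the ideal $\L + I_s(B'_C)$ inside $R[T_1,\ldots,T_n]$ has image in $\R(E_1) = R[T_1,\ldots,T_n]/\J_1$ equal to the extension of a residual intersection. The cleanest route is: modulo $\J_1$, the single form $\ell_{n-e}$ becomes $\overline{\ell_{n-e}} = x_{s+1}T_{n-d+s+1} + \cdots + x_d T_n$ after accounting for its column structure (its $A$-part dies modulo $I_s(B')$ since $[\ell_1\ldots\ell_{n-e-1}] = [x_1\ldots x_s]B'$), so $\overline{\L + I_s(B')} = \overline{(\ell_{n-e})} = (\overline{x_{s+1}T_{n-d+s+1} + \cdots + x_d T_n})$ is generated by one element of $\R(E_1)$; since $\R(E_1)$ is a CM domain and this element is nonzero and a nonunit, it is a regular element, so the quotient is CM of codimension one, which is precisely the assertion. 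The main technical point — and the step I expect to be the real obstacle — is making this reduction of $\overline{\ell_{n-e}}$ rigorous: one must show that modulo $\J_1$ (equivalently modulo $\L_1 + I_s(B')$) the "$A$-part" of $\ell_{n-e}$ lies in the ideal generated by the $\bullet$-entries times the $T_i$'s, and ultimately that $\overline{\L+I_s(B')}$ is genuinely principal and generated by a regular element. I would verify this by writing out $\ell_{n-e}$ via the last column of $\varphi$ and the last column of $B(\varphi)$ in \Cref{Column Section - phi and B(phi)}, using \Cref{Column Case JD equation}, and invoking Cramer's rule relations among the $\ell_i$ that already hold in $\R(E_1)$.
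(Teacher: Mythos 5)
Your proposal follows the paper's proof almost exactly: for (a) you pass to the special fiber ring $\F_{A}(E_1)\cong k[T_1,\ldots,T_n]/I_s(B')$ over $A=k[x_1,\ldots,x_s]$ and count dimensions after extending by the polynomial variables $x_{s+1},\ldots,x_d$, and for (b) you observe that modulo $\J_1=\L_1+I_s(B')$ the ideal $\L+I_s(B')$ collapses to the principal ideal $(\overline{\ell_{n-e}})$, generated by a nonzero, hence regular, element of the domain $\R(E_1)$. Two corrections are needed, one of which is a genuinely false step. In (a), the ``standard analytic spread formula'' only gives the inequality $\ell_A(E_1)\leq \dim A+\rank E_1-1$; the equality $\dim\F_A(E_1)=s+e$ should instead be read off from \Cref{B'hgt}(a), which says $I_s(B')$ is a prime of height $n-s-e$ in $k[T_1,\ldots,T_n]$ --- this is exactly what the paper does. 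In (b), the assertion that the ``$A$-part'' of $\ell_{n-e}$ dies modulo $I_s(B')$, so that $\overline{\ell_{n-e}}=\overline{x_{s+1}T_{n-d+s+1}+\cdots+x_dT_n}$ in $\R(E_1)$, is false: the $\bullet$-entries in the last column of $B(\varphi)$ are linear forms in the $T_i$, while $I_s(B')$ is generated in degree $s\geq 2$, so they cannot be absorbed; the true (and later used, in the proof of \Cref{Defining Ideal Column Case}) statement is only the congruence $\ell_{n-e}\equiv \gamma$ modulo $\p$. Fortunately this identification is not the real obstacle you feared --- it is not needed at all. For the lemma one only needs $\ell_{n-e}\notin\J_1$: otherwise $\L\subseteq\J_1$ would give $\J=\L:\p^{\infty}\subseteq\J_1:\p^{\infty}=\J_1$, since $\J_1$ is prime and $\p\nsubseteq\J_1$ by part (a), contradicting $\hgt\J=n-e>n-e-1=\hgt\J_1$. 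With that substitution your argument coincides with the paper's.
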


\begin{proof}
Recall that $\R_{A}(E_1) \cong A[T_1,\ldots,T_n]/\J_1$, where $A=k[x_1,\ldots,x_s]$. As $\p = (x_1,\ldots,x_s)$ is the homogeneous maximal ideal of $A$, the special fiber ring of $E_1$, as an $A$-module, is 
$$\F_{A}(E_1) \cong \R_{A}(E_1)/\p \R_{A}(E_1) \cong k[T_1,\ldots,T_n]/I_s(B')$$
by \cite[4.11]{SUV03}, noting that $\L_1\subseteq \p$. Moreover, this is a Cohen-Macaulay domain of dimension $s+e$, following \Cref{B'hgt}(a). On the other hand, notice that $\R(E_1) \cong \R_A(E_1)[x_{s+1},\ldots,x_d]$ and so
$$\R(E_1)/\p \R(E_1) \cong \F_{A}(E_1)[x_{s+1},\ldots,x_d].$$
Hence this quotient is a Cohen-Macaulay domain of dimension $s+e+(d-s) =d+e$. Now since $E_1$ has rank $e+1$ by \Cref{Gs Right to Left}(a), it follows that $\dim \R(E_1) = d+e+1$. Thus $\overline{\p}$ is a prime ideal with $\hgt \overline{\p}=1$, which shows (a).

For (b), notice that $\overline{\L+I_{s}(B')} = \overline{(\ell_{n-e})}$, which follows immediately from the expression of $\J_1$ in \Cref{R(E1) column case}. Since $\ell_{n-e} \notin \J_1$, it follows that $\overline{\ell_{n-e}}\neq 0$. As $\R(E_1)$ is a domain, the ideal $\overline{(\ell_{n-e})}$ is thus generated by a regular element. Hence it is Cohen-Macaulay of height one, as claimed. 
\end{proof}

In \cite[4.11]{CPW} it was shown that when $s=d-1$, the defining ideal of $\R(E)$ is $\J= \L \colon (x_1, \ldots, x_{d-1}) = \L +I_{d-1}(B')$. We now prove that the same property holds for every $s \leq d-1$, in the present setting.

\begin{thm}\label{Defining Ideal Column Case}
    With $\varphi$ and $E$ as in \Cref{Column Case Setting}, the defining ideal $\J$ of $\R(E)$ is 
    $$\J= \L:\p = \L+I_{s}(B')$$
where $\p=(x_1,\ldots,x_{s})$. Moreover, $\R(E)$ is Cohen-Macaulay.
\end{thm}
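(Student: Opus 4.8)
Here is my plan for proving \Cref{Defining Ideal Column Case}.

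\smallskip

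\textbf{Overall strategy.} The plan is to leverage the successive approximation machinery from \Cref{sec:column stripping} together with the already-established description of $\R(E_1)$ in \Cref{R(E1) column case}. The key point is that $E = \coker \varphi_1 = E_1/(\text{last generator})$ in the obvious sense, so the surjection $\R(E_1) \twoheadrightarrow \R(E)$ has kernel $\J/\J_1$, a height-one prime of the domain $\R(E_1)$. The entire proof reduces to identifying this height-one prime explicitly. I claim $\J = \L + I_s(B')$, equivalently $\J/\J_1 = \overline{\L + I_s(B')} = \overline{(\ell_{n-e})}$ by \Cref{R(E1) column case}. Since \Cref{lemmaModuloJ1col}(b) already shows $\overline{(\ell_{n-e})}$ is a Cohen-Macaulay height-one ideal generated by a regular element, and \Cref{Jasat} gives $\J = \L : \p^\infty$, the work is to show that this one generator already saturates, i.e. that $\overline{(\ell_{n-e})}$ is itself prime (hence equals the height-one prime $\J/\J_1$), or directly that $\L + I_s(B') = \L : \p$.

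\smallskip

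\textbf{Key steps.} First, I would record the chain $\L + I_s(B') \subseteq \L : \p \subseteq \L : \p^\infty = \J$, where the first inclusion is Cramer's rule applied to \eqref{Column Case JD equation} (as noted before the statement) and the last equality is \Cref{Jasat}. Passing to images modulo $\J_1$, it suffices to prove $\overline{(\ell_{n-e})} = \overline{\J}$. Second, I would use \Cref{lemmaModuloJ1col}: part (a) says $\overline{\p}$ is a height-one prime of $\R(E_1)$, and part (b) says $\overline{(\ell_{n-e})}$ is Cohen-Macaulay of height one. Third — the crux — I would show $\overline{(\ell_{n-e})}$ is unmixed of height one and that $\overline{\p}$ is the only prime of $\R(E_1)$ containing $\overline{(\ell_{n-e})}$ at which things can fail; more precisely, since $\J/\J_1$ is a height-one prime containing $\overline{(\ell_{n-e})}$ and $\R(E_1)$ is catenary, and since $\overline{(\ell_{n-e})}$ is Cohen-Macaulay (hence unmixed) of height one, it is enough to show $\overline{(\ell_{n-e})}$ is contained in a \emph{unique} minimal prime, namely $\J/\J_1$. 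For this I would localize at $\overline{\p}$: outside $V(\overline{\p})$ the module $E_1$ becomes free of appropriate rank (by \Cref{LTlocus} and \Cref{minFitt} applied to $E$, together with the fact that the nonlinear locus of $E$ sits inside $V(\p)$), so $\R(E) = \S(E)$ there, forcing $\J = \L$ locally and hence $\ell_{n-e} \in \L$ locally, i.e. $\overline{(\ell_{n-e})}_{\overline\q} = 0$ for $\overline\q \not\supseteq \overline\p$. Thus every associated prime of $\overline{(\ell_{n-e})}$ contains $\overline\p$; being height one and $\overline\p$ being height-one prime, the only such associated prime is $\overline\p$ itself — wait, it must also contain $\J/\J_1$; so in fact $\J/\J_1 = \overline\p$ restricted appropriately, and $\overline{(\ell_{n-e})}$ is $\overline\p$-primary of height one with $\R(E_1)/\overline{(\ell_{n-e})}$ Cohen-Macaulay. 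Finally, since $\R(E_1)/\overline{(\ell_{n-e})}$ is Cohen-Macaulay and $\overline{(\ell_{n-e})}$ is $\overline{\p}$-primary with $\overline{\p}$ prime, and $\R(E_1)$ is a domain, I would conclude $\overline{(\ell_{n-e})}$ equals its radical — one clean way: a height-one ideal generated by one element in a normal domain is unmixed, and combined with the primary decomposition having radical $\overline\p$, one gets $\overline{(\ell_{n-e})} = \overline{\p} = \overline{\J}$; alternatively and more robustly, I would argue $\R(E)\cong \R(E_1)/\overline{(\ell_{n-e})}$ is a domain of the right dimension $d+e$ by checking generic rank and Cohen-Macaulayness, forcing $\overline{(\ell_{n-e})}$ to be prime.

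\smallskip

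\textbf{Cohen-Macaulayness of $\R(E)$.} Once $\J = \L + I_s(B')$ is established, Cohen-Macaulayness is immediate: $\R(E) \cong \R(E_1)/\overline{(\ell_{n-e})}$ where $\R(E_1)$ is Cohen-Macaulay (\Cref{R(E1) column case}) and $\overline{(\ell_{n-e})}$ is generated by a single regular element (\Cref{lemmaModuloJ1col}(b)), so the quotient is Cohen-Macaulay. The dimension is $\dim \R(E_1) - 1 = (d+e+1) - 1 = d+e$, as expected.

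\smallskip

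\textbf{Main obstacle.} The delicate step is the third one — showing that the single element $\ell_{n-e}$ already saturates with respect to $\p$ modulo $\J_1$, i.e. that no higher powers of $\p$ are needed and no extra components appear. The content is genuinely that the nonlinear-type locus of $E$ is confined to $V(\p)$ (\Cref{LTlocus}, \Cref{minFitt}) and that modulo $\J_1$ the "remaining" equation is already radical along $\overline\p$; I expect to need the Cohen-Macaulay/unmixedness input from \Cref{lemmaModuloJ1col}(b) precisely to rule out embedded or lower-dimensional components, and the domain property of $\R(E)$ (dimension count via rank $e$) to upgrade "$\overline\p$-primary" to "prime." Everything else is bookkeeping with the inclusions and the structure of $B(\varphi)$ in \eqref{Column Section - phi and B(phi)}.
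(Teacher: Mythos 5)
Your overall skeleton is the same as the paper's: reduce modulo $\J_1$, aim to show $\overline{(\ell_{n-e})}=\overline{\J}$ in $\R(E_1)$, use the Cohen--Macaulay height-one property of $\overline{(\ell_{n-e})}$ from \Cref{lemmaModuloJ1col}(b) to reduce to a check at height-one primes, and deduce Cohen--Macaulayness of $\R(E)$ at the end exactly as you describe. But the execution of your crucial third step is wrong, and the error is not cosmetic. You assert that localizing away from $\overline{\p}$ gives $\overline{(\ell_{n-e})}_{\overline{\q}}=0$, that consequently every associated prime of $\overline{(\ell_{n-e})}$ contains $\overline{\p}$, and that $\overline{(\ell_{n-e})}$ is $\overline{\p}$-primary with $\overline{(\ell_{n-e})}=\overline{\p}=\overline{\J}$. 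All of this is backwards. Since $\ell_{n-e}$ is a nonzero element of the domain $\R(E_1)$, its localizations do not vanish; what linear-typeness away from $\p$ actually gives (via \Cref{Jasat}) is the \emph{equality} $\overline{\J}_{\q}=\overline{\L}_{\q}=\overline{(\ell_{n-e})}_{\q}$ at every height-one prime $\q\neq\overline{\p}$, because $\overline{\p}_\q=\R(E_1)_\q$ there. Moreover $\J/\J_1$ cannot equal $\overline{\p}$, since $\p\nsubseteq\J$ (this is built into \Cref{Jasat}), so $\overline{(\ell_{n-e})}$ is certainly not $\overline{\p}$-primary.

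The genuinely missing ingredient is the behavior \emph{at} $\q=\overline{\p}$: one must show $\overline{\ell_{n-e}}\notin\overline{\p}$, so that both $\overline{(\ell_{n-e})}$ and $\overline{\J}$ localize to the whole ring there and $\overline{\p}$ is not an associated prime of $\overline{(\ell_{n-e})}$ at all. The paper does this by observing that $\overline{(\ell_{n-e})}+\overline{\p}=(\overline{\gamma})+\overline{\p}$ with $\gamma=x_{s+1}T_{n-d+s+1}+\cdots+x_dT_n$, and that $x_1,\ldots,x_s,\gamma$ is a regular sequence; this uses the specific column shape of $\varphi$ in (\ref{Column Section - phi and B(phi)}) and is the one place the structure of the last column enters. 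Your fallback suggestion --- prove directly that $\R(E_1)/(\overline{\ell_{n-e}})$ is a domain of dimension $d+e$ and identify it with $\R(E)$ --- is circular as stated, since identifying that quotient with $\R(E)$ is equivalent to the equality $\J=\J_1+(\ell_{n-e})$ you are trying to prove. With the two local computations done correctly (linear type off $\overline{\p}$, and $\overline{\ell_{n-e}}$ a unit at $\overline{\p}$), the unmixedness from \Cref{lemmaModuloJ1col}(b) closes the argument exactly as you intended.
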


\begin{proof}
    Recall from \Cref{Jasat} that $\J = \L:\p^\infty$. Hence, by Cramer's rule one has the containments $\L+I_s(B') \subseteq \L:\p \subseteq \J$, and so we only 
 need to show that $\J= \L+I_s(B')$. Moreover, since $\J_1 \subseteq \L+I_s(B') \subseteq \J$, it suffices to prove that $\overline{\J} = \overline{\L+I_s(B')} = \overline{(\ell_{n-e})}$.

    Since clearly $\overline{(\ell_{n-e})} \subseteq \overline{\J}$, it is enough to show that $\overline{(\ell_{n-e})}$ and $\overline{\J}$ agree locally at the associated primes of $\overline{(\ell_{n-e})}$. Recall from \Cref{lemmaModuloJ1col}(b) that this ideal is Cohen-Macaulay of height one, hence all of its associated primes are minimal of height one. Thus it is enough to prove the stronger statement that $\overline{(\ell_{n-e})}_\q = \overline{\J}_\q$ for any prime $\R(E_1)$-ideal $\q$ with height one. Recall that $\overline{\p}$ is one such prime, due to \Cref{lemmaModuloJ1col}(a). 
    
    First, notice that if $\q \neq \overline{\p}$, we have
    $$\overline{\J}_\q = \overline{\L}_\q : \overline{\p}_\q^\infty =\overline{\L}_\q : \R(E_1)_\q = \overline{\L}_\q =\overline{(\ell_{n-e})}_\q ,$$
    since $\overline{\L} =\overline{(\ell_{n-e})}$. 
    If instead $\q=\overline{\p}$, note that $\overline{\ell_{n-e}} \notin \overline{\p}$. Indeed, one has that $\overline{(\ell_{n-e})} + \overline{\p} = (\overline{\gamma}) +\overline{\p}$ where $\gamma =x_{s+1}T_{n-d+s+1} +\cdots + x_dT_n$ following (\ref{Column Case JD equation}). Since $\p = (x_1,\ldots,x_s)$ and $x_1,\ldots,x_s,\gamma$ is a regular sequence, it follows that $\overline{(\ell_{n-e})} + \overline{\p} \neq \overline{\p}$, and so $\overline{\ell_{n-e}} \notin \overline{\p}$ as claimed. Therefore, $\overline{(\ell_{n-e})}_{\overline{\p}} = \R(E_1)_{\overline{\p}}$. Moreover, since $\overline{(\ell_{n-e})}\subseteq \overline{\J}$, it follows that $\overline{\J}_{\overline{\p}} = \R(E_1)_{\overline{\p}}$ as well. Thus $\overline{\J} = \overline{(\ell_{n-e})}$, and so $\J= \L+I_s(B')$, as claimed.

As for the Cohen-Macaulayness of $\R(E)$, notice that we have shown that $\J = \J_1+(\ell_{n-e})$. As noted in \Cref{lemmaModuloJ1col}, the element $\ell_{n-e}$ is regular modulo $\J_1$. Moreover, by \Cref{R(E1) column case} we know that $\R(E_1)$ is Cohen-Macaulay, hence $\R(E)\cong \R(E_1)/(\overline{\ell_{n-e}})$ is Cohen-Macaulay as well. 
\end{proof}

With this, we may easily describe the ideal defining the special fiber ring $\F(E)$.

\begin{cor}\label{F(I)defideal Column}
With the assumptions of \Cref{Column Case Setting}, the special fiber ring of $E$ is 
$$\F(E) \cong k[T_1,\ldots,T_n]/I_{s}(B').$$ 
In particular, $\F(E)$ is a Cohen-Macaulay domain of dimension $s+e$, i.e. $E$ has analytic spread $\ell(E)=s+e$. 
\end{cor}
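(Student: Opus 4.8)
The plan is to obtain $\F(E)$ from the description of $\J$ in \Cref{Defining Ideal Column Case} by killing the variables $x_1,\dots,x_d$. First I would recall that $\F(E) \cong R[T_1,\dots,T_n]/(\J + \m R[T_1,\dots,T_n])$, where $\m = (x_1,\dots,x_d)$, so that
\[
\F(E) \cong k[T_1,\dots,T_n]/\big(\J + \m R[T_1,\dots,T_n]\big)k[T_1,\dots,T_n].
\]
By \Cref{Defining Ideal Column Case} we have $\J = \L + I_s(B')$, where $\L = (\ell_1,\dots,\ell_{n-e})$ and the $\ell_j$ are linear in the $x_i$ (with coefficients in $k[T_1,\dots,T_n]$), so each $\ell_j$ lies in $\m R[T_1,\dots,T_n]$ and contributes nothing after reducing modulo $\m$. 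On the other hand, the entries of $B'$ already belong to the subring $k[T_1,\dots,T_n]$ by (\ref{Column Section - phi and B(phi)}), so $I_s(B')$ survives intact. Hence the image of $\J$ in $k[T_1,\dots,T_n]$ is exactly $I_s(B')$, giving $\F(E) \cong k[T_1,\dots,T_n]/I_s(B')$.

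For the remaining assertions, I would invoke \Cref{B'hgt}(a), which states that $I_s(B')$ is a Cohen--Macaulay prime ideal of height $n-s-e$ in $k[T_1,\dots,T_n]$. Primeness of $I_s(B')$ immediately gives that $\F(E)$ is a domain, and the Cohen--Macaulay property of $k[T_1,\dots,T_n]/I_s(B')$ is exactly the Cohen--Macaulayness of $\F(E)$. The dimension is then
\[
\ell(E) = \dim \F(E) = n - \hgt I_s(B') = n - (n-s-e) = s+e,
\]
which is the claimed analytic spread.

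I do not anticipate a genuine obstacle here: the corollary is essentially a formal consequence of \Cref{Defining Ideal Column Case} together with \Cref{B'hgt}(a). The only point requiring a little care is the justification that reducing $\J$ modulo $\m$ produces precisely $I_s(B')$ and not something smaller or larger --- i.e. that the linear forms $\ell_j$ die completely while no new relations among the $T_i$ are created. This follows because the containment $I_s(B') \subseteq \J$ is genuine (by Cramer's rule, as used in the proof of \Cref{Defining Ideal Column Case}), the $\ell_j$ visibly lie in $\m R[T_1,\dots,T_n]$, and $I_s(B')$ has entries in $k[T_1,\dots,T_n]$ so that $(\L + I_s(B') + \m R[T_1,\dots,T_n]) \cap k[T_1,\dots,T_n] = I_s(B')$ --- the last equality being a consequence of the fact that $R[T_1,\dots,T_n] = k[T_1,\dots,T_n][x_1,\dots,x_d]$ is a polynomial extension, so modding out by the $x_i$ is a flat base change on the $k[T_1,\dots,T_n]$-side.
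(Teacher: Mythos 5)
Your proposal is correct and follows essentially the same route as the paper's proof: reduce the description $\J = \L + I_s(B')$ from \Cref{Defining Ideal Column Case} modulo $\m = (x_1,\ldots,x_d)$, observing that $\L$ dies while $I_s(B')$ survives because its entries lie in $k[T_1,\ldots,T_n]$, and then invoke \Cref{B'hgt}(a) for primeness, Cohen--Macaulayness, and the height computation giving $\ell(E) = s+e$. The extra care you take in justifying that no new relations among the $T_i$ appear is a reasonable elaboration of a step the paper treats as immediate.
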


\begin{proof} 
Recall that the entries of $B'$ belong to $k[T_1, \ldots, T_n]$. From \Cref{Defining Ideal Column Case}, it follows that $\J+(x_1,\ldots,x_d) = I_{s}(B')+(x_1,\ldots,x_d)$, and so $\F(E) \cong k[T_1,\ldots,T_n]/I_{s}(B')$. Moreover, by \Cref{B'hgt}(a) we have that $I_{s}(B')$ is a Cohen-Macaulay prime ideal of $k[T_1,\ldots,T_n]$ with $\hgt I_{s}(B') = n-s-e$. Hence it follows that $\F(E)$ is a Cohen-Macaulay domain with $\ell(E)=\dim \F(E) = s+e$.
\end{proof}

\begin{rem}\label{Column case fiber type}
With the shape of the defining ideal $\J$ in \Cref{Defining Ideal Column Case}, we note that the module $E$ is of \textit{fiber type} by \Cref{F(I)defideal Column}. In other words, aside from the generators of $\L$, the equations of $\J$ belong to the subring $k[T_1,\ldots,T_n]$.    
\end{rem}

As another consequence of \Cref{Defining Ideal Column Case}, we may also describe the defining ideal $\J$ as a residual intersection. 

\begin{prop}\label{J RI prop - column case}
 With the assumptions of \Cref{Column Case Setting}, the defining ideal $\J$ of $\R(E)$ may be realized as 
 $$\J=\L:(x_1,\ldots,x_s,\gamma)$$
 where $\gamma =x_{s+1}T_{n-d+s+1} +\cdots + x_dT_n$, as in the proof of \Cref{Defining Ideal Column Case}. Moreover, this is an $(n-e)$-residual intersection of $(x_1,\ldots,x_s,\gamma)$.
\end{prop}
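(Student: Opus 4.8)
The plan is to verify the two assertions in Proposition~\ref{J RI prop - column case} directly from the defining-ideal description already obtained in Theorem~\ref{Defining Ideal Column Case}, namely $\J = \L + I_s(B') = \L:\p$ with $\p = (x_1,\ldots,x_s)$, together with the height count $\hgt \J = n-e$.

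\medskip

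\textbf{Step 1: $\L:(x_1,\ldots,x_s,\gamma) = \J$.} Since $\gamma = x_{s+1}T_{n-d+s+1} + \cdots + x_dT_n$, the equation (\ref{Column Case JD equation}) shows that $\ell_{n-e} = \gamma + (\text{terms in }x_1,\ldots,x_s)$, so modulo $\L$ and $\p$ the elements $\ell_{n-e}$ and $\gamma$ differ by an element of $\p$. I would argue as follows. On one hand, $\L:(x_1,\ldots,x_s,\gamma) \subseteq \L:\p = \J$. On the other hand, to get the reverse containment it suffices to check $\J\cdot(x_1,\ldots,x_s,\gamma) \subseteq \L$, i.e. $\J\cdot\p\subseteq \L$ and $\J\gamma\subseteq \L$. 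The first is just $\J = \L:\p$. For the second: working modulo $\J_1$ (where $\R(E_1) = R[\T]/\J_1$ is a domain), we have $\overline{\J} = \overline{(\ell_{n-e})}$ by the proof of Theorem~\ref{Defining Ideal Column Case}, and $\overline{\L} = \overline{(\ell_{n-e})}$ as well, so $\overline{\J} = \overline{\L}$; hence $\J \subseteq \L + \J_1 \subseteq \L + \L_1\cdot R = \L$ — wait, more carefully, $\J_1 = \L_1 + I_s(B') \subseteq \L + I_s(B')$, and $I_s(B')\gamma$: here I would use that $\gamma\cdot I_s(B')$ lands in $\L$ because $[x_{s+1}\ldots x_d]$ times the relevant block together with Cramer's rule on the full matrix $B(\varphi)$ expresses $\gamma$ times an $s\times s$ minor of $B'$ as a combination of the $\ell_j$. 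Concretely, each generator of $\J = \L + I_s(B')$, when multiplied by $\gamma$, is visibly in $\L$: the $\L$-part is clear, and for a maximal minor $\delta$ of $B'$ one uses that $[x_1\ldots x_d]\cdot B(\varphi) = [\ell_1\ldots\ell_{n-e}]$ and the block shape of $B(\varphi)$ in (\ref{Column Section - phi and B(phi)}) to write $\gamma\delta$ (up to sign) as an $R$-linear combination of $\ell_1,\ldots,\ell_{n-e}$, expanding the appropriate $(s+1)\times(s+1)$ minor of $B(\varphi)$ along its last column. This gives $\gamma\J\subseteq \L$, hence $\J\subseteq \L:(x_1,\ldots,x_s,\gamma)$, and equality follows.

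\medskip

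\textbf{Step 2: the residual intersection property.} By Definition~\ref{defResInt}, $\J = \a:I$ with $I = (x_1,\ldots,x_s,\gamma)$ and $\a = \L = (\ell_1,\ldots,\ell_{n-e})$ is an $(n-e)$-residual intersection of $I$ provided: (i) $\L$ is generated by $n-e$ elements contained in $I$; (ii) $\hgt\J \geq n-e$; (iii) $\J$ is a proper ideal. Point (iii) and $\hgt\J = n-e$ are given by Theorem~\ref{Defining Ideal Column Case} (or Proposition~\ref{Jasat}). Point (i): $\L = (\ell_1,\ldots,\ell_{n-e})$ has exactly $n-e$ generators, and each $\ell_j \in I$ because by (\ref{Column Case JD equation}) and the shape of $B(\varphi)$, for $j\leq n-e-1$ one has $\ell_j = [x_1\ldots x_s]\cdot(\text{column }j\text{ of }B') \in (x_1,\ldots,x_s) \subseteq I$, while $\ell_{n-e} = (\text{something in }(x_1,\ldots,x_s)) + \gamma \in I$. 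Finally I should also confirm the ambient ring $R[\T]$ is Cohen-Macaulay local after localizing (it is a polynomial ring over $R$, so localize at the relevant maximal ideal) and that $x_1,\ldots,x_s,\gamma$ is a regular sequence in $R[\T]$, which is immediate since these are $s+1$ algebraically independent-looking linear forms whose quotient $R[\T]/(x_1,\ldots,x_s,\gamma)$ has the expected dimension; this is needed only to invoke the definition cleanly, not strictly for the statement.

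\medskip

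I expect the main obstacle to be Step~1, specifically the inclusion $\gamma\J \subseteq \L$ — one must see clearly why multiplying a maximal minor of $B'$ by $\gamma$ produces an element of $\L$ rather than merely of $\J$. The cleanest route is the Laplace/Cramer expansion of an $(s+1)\times(s+1)$ minor of the full Jacobian dual $B(\varphi)$ (using the block of rows $1,\ldots,s$ together with one of the last $d-s$ rows) combined with (\ref{Column Case JD equation}); once that identity is written down the rest is bookkeeping. Alternatively, one can bypass the explicit minor computation by arguing modulo $\J_1$ as in Theorem~\ref{Defining Ideal Column Case}: there $\overline{\J} = \overline{(\ell_{n-e})} = \overline{\L}$, so it suffices to check $\gamma\ell_{n-e} \in \L$ and $\gamma\J_1 \subseteq \L$, the latter reducing to $\gamma\cdot I_s(B')\subseteq \L$, which is the same minor identity but now only needed up to $\J_1$-torsion and hence follows from $\R(E_1)$ being a domain plus a height/localization argument exactly parallel to the proof of Theorem~\ref{Defining Ideal Column Case}.
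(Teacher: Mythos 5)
Your proposal is correct and follows essentially the same route as the paper: one containment comes from $\L:(x_1,\ldots,x_s,\gamma)\subseteq\L:(x_1,\ldots,x_s)=\J$, the other from showing $\J=\L+I_s(B')$ kills $(x_1,\ldots,x_s,\gamma)$ modulo $\L$ via Cramer's rule, and the residual-intersection claim is just $\L\subseteq(x_1,\ldots,x_s,\gamma)$ plus $\hgt\J=n-e$. The one point worth tightening is the minor identity you flag as the main obstacle: an $(s+1)\times(s+1)$ minor of $B(\varphi)$ built from rows $1,\ldots,s$ and one of the last $d-s$ rows expands to $T_{n-d+s+i}\cdot\delta$, not $\gamma\cdot\delta$, and Cramer's rule does not apply directly to such a submatrix of the $d$-row transition. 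The paper instead forms the $(s+1)\times(n-e)$ matrix $B''$ with top block $[\,B'\mid\bullet\,]$ and bottom row $(0,\ldots,0,1)$, which satisfies $[\ell_1\ldots\ell_{n-e}]=[x_1\ldots x_s\ \gamma]\cdot B''$ and $I_{s+1}(B'')=I_s(B')$; Cramer applied to this equation gives $(x_1,\ldots,x_s,\gamma)\cdot I_s(B')\subseteq\L$ in one stroke. Your surrounding computation ($\gamma\equiv\ell_{n-e}$ modulo $\p$-combinations of the rows of $B'$, together with $\p\cdot I_s(B')\subseteq\L_1$) amounts to exactly this, so the gap is cosmetic rather than substantive.
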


\begin{proof}
Recall from the proof of \Cref{Defining Ideal Column Case} that $\L \subseteq (x_1,\ldots,x_s,\gamma)$. Moreover, notice that we have the containment
\begin{equation}\label{RI containment - column case}
 \L:(x_1,\ldots,x_s,\gamma)\subseteq \L:(x_1,\ldots,x_s) =\J
\end{equation}
where the equality follows from \Cref{Defining Ideal Column Case}. Cramer's rule then implies that $I_s(B')= I_{s+1}(B'') \subseteq \L:(x_1,\ldots,x_s,\gamma)$, where $B''$ is the matrix
\begin{equation}\label{B'' matrix - Column case}
B''=  \left(\!\begin{array}{ccc|c}
   & &    & \bullet            \\
   & B' &  &       \vdots      \\
   & &      &       \bullet      \\
\hline
    0&\cdots&0    &  1  \\ 
\end{array}\!\right)   
\end{equation}
satisfying the matrix equation 
\begin{equation}\label{B'' transition equation - Column Case}
[\ell_1 \ldots \ell_{n-e}] = [x_1\ldots x_s \,\, \gamma] \cdot B''.    
\end{equation}
Thus, $\L + I_s(B')\subseteq \L:(x_1,\ldots,x_s,\gamma)$, hence by \Cref{Defining Ideal Column Case} we have $\J= \L:(x_1,\ldots,x_s,\gamma)$.  Moreover, as $\hgt \J =n-e$ and $\L=(\ell_1,\ldots,\ell_{n-e}) \subseteq (x_1,\ldots,x_s,\gamma)$, we conclude that $\J$ is an $(n-e)$-residual intersection of $(x_1,\ldots,x_s,\gamma)$.    
\end{proof}

\begin{rem}\label{Column case - RI remark}
As noted in the proof of \Cref{Defining Ideal Column Case}, we have that $x_1,\ldots,x_s,\gamma$ is a regular sequence. Hence by \Cref{J RI prop - column case}, $\J$ is a residual intersection of a complete intersection ideal. The proof of the main result of \cite{CPW} relied on a similar observation, and then used \Cref{thmResInt} to obtain a generating set of $\J$. We remark that a similar path could be followed to give an alternative proof of \Cref{Defining Ideal Column Case}, however the method of successive approximations of $\R(E)$ allows for a much quicker proof, and gives that $\J$ is a residual intersection as a byproduct. Whereas the methods of \cite{CPW} could have been applied in the present setting, we will see in the proceeding section that they fail for the row case; see \Cref{Row Case - RI remark}.
\end{rem}

As noted in \Cref{s=d-1 remark}, the column setting and the row setting of \Cref{rank1shapes} coincide when $s=d-1$, in which case one has the main result of \cite{CPW}, which shows that the defining ideal of $\R(E)$ is $\J=\L + I_{d-1}(B')$, similarly to \Cref{Defining Ideal Column Case}. In the next section, we show that in the row setting, i.e. when $\varphi= \varphi_R$ in (\ref{matrixshapes}), the ideal $\L + I_{s}(B')$ will be \textit{strictly} contained in $\J$, when $s<d-1$. Moreover, unlike in \Cref{Defining Ideal Column Case}, the module $E$ will not be of fiber type.


\section{The row case}\label{sec:row section}

In this section, we describe the defining ideal $\J$ of $\R(E)$ with $E$ as in \Cref{rank1setting}, assuming that the presentation matrix $\varphi$ of $E$ is in row form $\varphi=\varphi_R$ as in (\ref{matrixshapes}). As before, we employ the method of successive approximations to study $\R(E)$. The key difference here, opposed to the case of \Cref{sec:column section}, is that this method becomes part of an induction argument to determine the defining ideal of $\R(E)$. Our setting throughout is as follows.

\begin{set}\label{Row Case Setting}
Adopt the assumptions of \Cref{rank1setting} and assume that, after the appropriate change of coordinates, and row and column operations, $\varphi$ has the shape $\varphi =\varphi_R$ in (\ref{matrixshapes}).
\end{set}

As in the previous section, we omit the subscript and write $\varphi$ for $\varphi_R$ throughout to simplify the notation. With this, the matrix $\varphi$ and its Jacobian dual, with respect to $x_1,\ldots,x_d$, are
\begin{equation}\label{Row Section - phi and B(phi)}
 \varphi = \left(\!\begin{array}{ccc|ccc}
   & & &  *&\cdots & *\\
   & \varphi'&  &  \vdots & & \vdots \\
   & & & *&\cdots & *\\
   \hline
    * &\cdots& * &x_{s+1}&\cdots & x_d
\end{array}\!\right) 
\qquad \text{and} \qquad
B(\varphi)=  \left(\!\begin{array}{ccc|ccc}
& &                 & & & \\
   & B' &              &  & \psi &\\
   & &                  & &  &  \\
   \hline
    0&\cdots&0          & T_n\\
    \vdots & &\vdots    && \ddots         \\
    0&\cdots&0          &&&T_n 
\end{array}\!\right)
\end{equation}
\smallskip
as in (\ref{matrixshapes}) and (\ref{JD rank1 shapes}). Along with the Jacobian dual $B(\varphi)$, consider the matrix 
\begin{equation}\label{Row Section - B'' and C}
C =  \left(\!\begin{array}{ccc|ccc}
& &                 & & & \\
   & B' &              &  & \psi &\\
   & &                  & &  &  \\
   \hline
    0&\cdots&0          & x_{s+1}& \cdots &x_d
\end{array}\!\right)
\end{equation}
where $B'$ and $\psi$ are the submatrices of $B(\varphi)$ in (\ref{Row Section - phi and B(phi)}).

\begin{rem}\label{B' and C matrices}
As noted prior to \Cref{B'hgt}, recall that $B' = B(\varphi_{d-s})$, the Jacobian dual of the submatrix $\varphi_{d-s}$ with respect to $x_1,\ldots,x_s$, adopting the notation of \Cref{sec:column stripping}. Moreover, notice that the matrix $C$ is a transition matrix as well. Indeed, from (\ref{B'R matrix equation}) and the shape of $C$ above, we see that
\begin{equation}\label{Row Case - Transition}
[\ell_1 \ldots \ell_{n-e-(d-s)}] = [x_1\ldots x_s] \cdot B' \quad \quad \text{and} \quad \quad    [\ell_1 \ldots \ell_{n-e}] = [x_1 \ldots x_s \,\,\, T_n] \cdot C.
\end{equation}
where $[\ell_1 \ldots \ell_{n-e}] = [T_1 \ldots T_n] \cdot \varphi$ are the equations defining the symmetric algebra $\S(E)$.
\end{rem}

Notice that, by Cramer's rule, and since $\J$ is a prime ideal, we have that $\L+I_s(B')+I_{s+1}(C)\subseteq \J$. The following theorem, which we prove by induction in \Cref{sec:induction}, shows that this containment is actually an equality. 

\begin{thm}\label{Defining Ideal Row Case}
With the assumptions of \Cref{Row Case Setting} and $\varphi$ the presentation of $E$ as in (\ref{Row Section - phi and B(phi)}), the Rees algebra of $E$ is $\R(E) \cong R[T_1,\ldots,T_n]/\J$ with 
$$\J=\L:(x_1,\ldots,x_s)= \L+I_s(B')+I_{s+1}(C),$$ 
where $\L=(\ell_1,\ldots,\ell_{n-e})$, and $B'$ and $C$ are the matrices in (\ref{Row Section - phi and B(phi)}) and (\ref{Row Section - B'' and C}), satisfying the equations of (\ref{Row Case - Transition}). Moreover, $\R(E)$ is Cohen-Macaulay.
\end{thm}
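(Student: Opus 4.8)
The plan is to establish the theorem by the method of successive approximations, carried out here as an induction along the chain of Rees algebras
\[\R(E_{d-s}) \twoheadrightarrow \R(E_{d-s-1}) \twoheadrightarrow \cdots \twoheadrightarrow \R(E_0)=\R(E)\]
coming from (\ref{Rees alg Surjections}), where $\varphi_i$ is obtained from $\varphi=\varphi_R$ by deleting the last $i$ columns; note that the $d-s$ columns deleted first are precisely those meeting the bottom row which carries $x_{s+1},\ldots,x_d$. The base of the induction is $\R(E_{d-s})$: since the entries of $\varphi_{d-s}$ lie in $A=k[x_1,\ldots,x_s]$ and $E_{d-s}$ satisfies $G_s=G_{\dim A}$ over $A$ (by the remark following \Cref{Gs not Gs+1}), \cite[4.11]{SUV03} yields $\R_A(E_{d-s})\cong A[T_1,\ldots,T_n]/\J_{d-s}$ with $\J_{d-s}=\L_{d-s}:(x_1,\ldots,x_s)=\L_{d-s}+I_s(B')$, a Cohen-Macaulay ring; base-changing to $R$ and using that $x_{s+1},\ldots,x_d$ is a regular sequence modulo $\J_{d-s}$ gives the same presentation over $R$ and the Cohen-Macaulayness of $\R(E_{d-s})$, exactly as in \Cref{R(E1) column case}. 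Here $I_{s+1}(C)$ contributes nothing yet, since the first $n-e-d+s$ columns of $C$ have zero bottom row.

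For the inductive step I would pass from $\R(E_{i+1})$ to $\R(E_i)$ for $i=d-s-1,\ldots,0$, each step adjoining the single new linear form $\ell_{n-e-i}$ and ``activating'' one further special column of $C$. By the discussion in \Cref{sec:column stripping}, $\J_i/\J_{i+1}$ sits in $0\to\J_i/\J_{i+1}\to\R(E_{i+1})\to\R(E_i)\to 0$ and is a height-one prime of the (inductively) Cohen-Macaulay domain $\R(E_{i+1})$. The proposed value of $\J_i$ is $\L_i+I_s(B')+I_{s+1}(C_i)$, where $C_i$ is the submatrix of $C$ formed by its first $n-e-d+s$ columns together with the $d-s-i$ special columns attached to $\ell_{n-e-d+s+1},\ldots,\ell_{n-e-i}$; Cramer's rule and primeness of $\J_i$ give the inclusion ``$\subseteq\J_i$'', and one checks $\L_{i+1}+I_s(B')+I_{s+1}(C_{i+1})\subseteq\L_i+I_s(B')+I_{s+1}(C_i)$, so it suffices to prove the reverse inclusion after reducing modulo $\J_{i+1}$. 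Writing $\overline{(\,\cdot\,)}$ for images modulo $\J_{i+1}$, and using $I_s(B')\subseteq\J_{d-s}\subseteq\J_{i+1}$, the candidate reduces to $\overline{(\ell_{n-e-i})}$ together with the minors of $C_i$ involving the newly activated column. The crux is to show that this image $\overline{I}$ is Cohen-Macaulay of height one, hence unmixed, so that the equality $\overline{I}=\overline{\J_i}$ may be verified locally at the finitely many height-one primes of $\R(E_{i+1})$: away from $\overline{\p}$ both ideals agree with the localization of $\overline{\L_i}$, since $E_i$ is of linear type off $V(\p)$ (as in \Cref{LTlocus}), and at $\overline{\p}$ one exhibits, via (\ref{Row Case - Transition}) and the shape of $C$, a regular sequence extending $x_1,\ldots,x_s$ by a suitable combination of $x_{s+1},\ldots,x_d$ and $T_n$, which makes an element of $\overline{I}$ a unit and forces $\overline{\J_i}$ to localize to the unit ideal at $\overline{\p}$ as well.

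The technical heart is the height and unmixedness claim for the candidate ideal: this should follow by iterating \cite[2.2]{BM16} to see that $\L_i:\p^\infty$ is prime of the predicted height, and then applying \cite[2.4]{BM16}, \cite[Thm.~1]{EN62} and \cite[A2.13]{Eisenbud} to control $I_s(B')$, $I_{s+1}(C_i)$ and their sum, in the spirit of \Cref{B'hgt}. Correspondingly, the main obstacle I anticipate is bookkeeping the precise shape of the intermediate ideals $\J_i$ through the induction --- identifying exactly which minors of $C$ are activated by each new column and checking that $\R(E_{i+1})/\overline{I}$ remains Cohen-Macaulay --- since, unlike the column case where a single regular element $\ell_{n-e}$ is adjoined (\Cref{Defining Ideal Column Case}), here a linear form and several determinantal generators enter at every stage, and, as the remark following \Cref{J RI prop - column case} notes, the residual-intersection shortcut of \cite{CPW} is unavailable in this setting. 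Once the induction closes one obtains $\J=\J_0=\L:\p=\L+I_s(B')+I_{s+1}(C)$, and the Cohen-Macaulayness of $\R(E)$ follows by descending the chain: each $\R(E_i)\cong\R(E_{i+1})/\overline{\J_i}$ is a quotient of a Cohen-Macaulay ring by a Cohen-Macaulay unmixed height-one ideal, starting from the Cohen-Macaulay base $\R(E_{d-s})$.
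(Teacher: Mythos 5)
Your overall strategy --- successive approximation along the chain $\R(E_{d-s})\twoheadrightarrow\cdots\twoheadrightarrow\R(E)$, with the equality between the candidate and $\overline{\J_i}$ verified locally at height-one primes --- is the same as the paper's, which runs the same induction on $d-s$ by viewing $E_1$ as a module over $R'=k[x_1,\ldots,x_{d-1}]$. However, your inductive step has a genuine gap precisely at what you call its crux. The Cohen--Macaulayness and height of the candidate $\overline{(\ell_{n-e-i})}+\overline{I_{s+1}(C_i)}$ do not follow from iterating \cite[2.2, 2.4]{BM16} together with \cite[Thm.~1]{EN62} and \cite[A2.13]{Eisenbud}: those results control determinantal ideals of maximal height such as $I_s(B')$, but the candidate is not of that form. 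The paper obtains the Cohen--Macaulay property only indirectly, by introducing the auxiliary ideal $\K=(\ell_1,\ldots,\ell_{n-e-1})+I_s(C')+(T_n)$ (whose image is Cohen--Macaulay of height one by a residual-intersection argument), proving that $\overline{\K}$ is linked to $\overline{(x_1,\ldots,x_s,T_n)}$ through $\overline{T_n}$ --- which requires the nontrivial \Cref{Is(C')barnonzero} that $I_s(\widetilde{C'})\not\subseteq\J_1$ --- and then identifying the candidate with the divisorial ideal $\D=\overline{\ell_{n-e}}\,\overline{\K}/\overline{T_n}\cong\overline{\K}$. Nothing in your sketch substitutes for this chain of reductions.

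Moreover, your local analysis ``at $\overline{\p}$'' does not go through as described, because in the row setting $\overline{\p}=\overline{(x_1,\ldots,x_s)}$ is not a prime of $\R(E_1)$: its minimal primes are $\overline{(x_1,\ldots,x_{d-1})}$ together with the minimal primes of $\overline{(x_1,\ldots,x_s,T_n)}$, as in (\ref{min primes}). At $\overline{(x_1,\ldots,x_{d-1})}$ your plan works, since $\overline{\ell_{n-e}}$ is congruent to $\overline{x_dT_n}$ and becomes a unit. But at a minimal prime $\q$ of $\overline{(x_1,\ldots,x_s,T_n)}$ one has $\ell_{n-e}\in(x_1,\ldots,x_s,x_dT_n)\subseteq(x_1,\ldots,x_s,T_n)$, so $\overline{\ell_{n-e}}\in\q$ and no regular-sequence trick makes it a unit; the element of the candidate that becomes a unit there is an $(s+1)$-minor of $C$ involving the last column, and proving that such a minor lies outside $\q$ is exactly the content of the linkage statement $\overline{\K}_\q=\R(E_1)_\q$ from \Cref{colons}. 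This is the technical heart of the paper's argument and is absent from yours. A smaller organizational point: the intermediate modules $E_i$ with $0<i<d-s$ do not satisfy \Cref{rank1setting} over $R$ (for instance $I_1(\varphi_i)\neq\m$), so your chain induction over the fixed ring $R$ needs a separately formulated intermediate statement; the paper sidesteps this by shrinking the ambient polynomial ring at each step.
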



\subsection{Proof by Induction}\label{sec:induction}

We devote this subsection to the proof of \Cref{Defining Ideal Row Case}. As such, we provide the induction setting below.

\begin{set}\label{Row Section - Induction Setting}
With the assumptions of \Cref{Row Case Setting}, we proceed by induction on the difference $d-s$ in order to prove \Cref{Defining Ideal Row Case}. We note that the initial case $d-s=1$ has been shown in \cite[3.9]{CPW}, observing that $I_{s+1}(C) \subseteq I_s(B')$ in this setting. (Alternatively, this follows from \Cref{Defining Ideal Column Case}, due to \Cref{s=d-1 remark}.) Hence we assume that $d-s\geq 2$ and the claim holds up to $d-s-1$. 
\end{set}

As in the previous section, we employ the technique of successive approximations to produce a ring surjecting onto $\R(E)$ so that the kernel of this map is a prime ideal of height one. As before, we take such a ring to be the Rees ring of $E_1=\coker \varphi_1$, where $\varphi_1$ and $E_1$ are as in \Cref{sec:column stripping}. Notice that from the shape of $\varphi$ in (\ref{Row Section - phi and B(phi)}), the entries of $\varphi_1$ belong to the subring $R'=k[x_1,\ldots,x_{d-1}]$. In particular, $E_1$ may be viewed as a module over this subring.

\begin{prop}\label{R(E1) row case}
The Rees algebra of $E_1$ is, as an $R$-module, $\R(E_1)\cong R[T_1,\ldots,T_n]/\J_1$ with 
$$\J_1 = \L_1:(x_1,\ldots, x_s) = \L_1 +I_s(B')+I_{s+1}(C_1)$$
where $\L_1= (\ell_1,\ldots, \ell_{n-e-1})$, $B'$ is the submatrix of $B(\varphi)$ in (\ref{Row Section - phi and B(phi)}), and $C_1$ is the submatrix of $C$ in (\ref{Row Section - B'' and C}) obtained by deleting its last column. Moreover, $\R(E_1)$ is a Cohen-Macaulay domain of dimension $d+e+1$.
\end{prop}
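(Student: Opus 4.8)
The plan is to reduce the statement about $E_1$ as an $R$-module to a statement about $E_1$ over the smaller polynomial ring $R' = k[x_1,\ldots,x_{d-1}]$, where the induction hypothesis of \Cref{Row Section - Induction Setting} applies, and then to lift the resulting description of the defining ideal by adjoining the extra variable $x_d$. Concretely, from the shape of $\varphi$ in (\ref{Row Section - phi and B(phi)}), the entries of $\varphi_1$ lie in $R'$, so $E_1 = \coker \varphi_1$ is an $R'$-module with presentation matrix $\varphi_1$, and I would first verify that $\varphi_1$, viewed over $R'$, falls under \Cref{Row Case Setting} with the difference parameter dropped from $d-s$ to $(d-1)-s$: it has linear entries with $I_1(\varphi_1) = (x_1,\ldots,x_{d-1})$, and modulo $(x_1,\ldots,x_s)$ it has rank one with nonzero entries $x_{s+1},\ldots,x_{d-1}$ concentrated in the last row. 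By \Cref{Gs not Gs+1} (or the remark following it, which records that all $E_i$ for $0 \le i \le d-s$ satisfy $G_s$ but not $G_{s+1}$), $E_1$ has projective dimension one and satisfies $G_s$ but not $G_{s+1}$ over $R$; since $\spec(R') \subset \spec(R)$, the same holds over $R'$. Thus the induction hypothesis gives $\R_{R'}(E_1) \cong R'[T_1,\ldots,T_n]/\J_1'$ with $\J_1' = \L_1 : (x_1,\ldots,x_s) = \L_1 + I_s(B'_{(1)}) + I_{s+1}(C_{(1)})$, where $B'_{(1)}$ and $C_{(1)}$ are the corresponding Jacobian-dual submatrices for $\varphi_1$.

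The next step is to check that the Jacobian-dual data for $\varphi_1$ over $R'$ match the submatrices $B'$ and $C_1$ of the statement. Because $\varphi_1$ is obtained from $\varphi$ by deleting the last column, and the equation $[T_1 \ldots T_n]\cdot \varphi = [\ell_1 \ldots \ell_{n-e}] = [x_1 \ldots x_d]\cdot B(\varphi)$ from (\ref{matrixshapes})/(\ref{JD rank1 shapes}) decomposes block-wise, deleting the last column of $\varphi$ corresponds exactly to deleting the last column of each of $B(\varphi)$ and $C$; in particular the $B'$ block is unchanged (it is $B(\varphi_{d-s})$, and $\varphi_1$ still contains $\varphi_{d-s}$ as a submatrix since $d-s \ge 2$), while $C$ loses its last column to become $C_1$. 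Here I would also note that $I_s(B') \subseteq I_{s+1}(C_1)$ or track the relation between these minors carefully, matching the form the induction hypothesis produces (with the $R'$-variable $x_{d-1}$ playing the role that $x_d$ plays in the $d-s-1$ case) against the stated $B'$ and $C_1$. Then, since $\R(E_1)$ as an $R$-module is $\R_{R'}(E_1) \otimes_{R'} R = \R_{R'}(E_1)[x_d]$, adjoining $x_d$ as a polynomial variable gives $\R(E_1) \cong R[T_1,\ldots,T_n]/\J_1$ with $\J_1 = \L_1 + I_s(B') + I_{s+1}(C_1)$ extended to $R[T_1,\ldots,T_n]$; and $\L_1 : (x_1,\ldots,x_s)$ is preserved under the faithfully flat base change $R' \to R' [x_d]$, giving $\J_1 = \L_1 : (x_1,\ldots,x_s)$ over $R$ as well.

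For the Cohen-Macaulay and domain assertions: over $R'$, the induction hypothesis (i.e. \Cref{Defining Ideal Row Case} in the case $d-s-1$) yields that $\R_{R'}(E_1)$ is Cohen-Macaulay, and it is a domain because $E_1$ is torsion-free (it satisfies $G_2$) of rank $e+1$ — more precisely, $\J_1'$ is prime since it equals $\L_1 : (x_1,\ldots,x_s)^\infty$, the defining ideal of the Rees ring of the torsion-free module $E_1$, which is a domain. Base-changing by the polynomial variable $x_d$ preserves both properties: $\R(E_1) \cong \R_{R'}(E_1)[x_d]$ is Cohen-Macaulay and a domain. Finally the dimension is $\dim \R_{R'}(E_1) + 1 = \big((d-1) + (e+1)\big) + 1 = d + e + 1$, using $\dim R' = d-1$ and $\rank E_1 = e+1$ from \Cref{Gs Right to Left}(a); alternatively one cites $\dim \R(E_1) = \dim R + \rank E_1 = d + (e+1)$ directly.

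The main obstacle I anticipate is purely bookkeeping rather than conceptual: making the identification of $B'$, $\psi$, $C$, $C_1$ for $\varphi_1$ over $R'$ precisely consistent with the matrices named in the theorem statement — in particular confirming that the "extra" block structure (the $T_n \cdot I_{d-s}$ corner of $B(\varphi)$ versus the $T_n \cdot I_{d-s-1}$ corner appropriate to $\varphi_1$ over $R'$, and correspondingly the $x_{s+1},\ldots,x_d$ versus $x_{s+1},\ldots,x_{d-1}$ last row of $C$ versus $C_1$) lines up under deleting the last column, and that no minors are lost or gained in the passage. One must be a little careful that $d - s \ge 2$ is used to ensure $\varphi_{d-s}$ is still a genuine submatrix of $\varphi_1$ so that $B' = B(\varphi_{d-s})$ is literally the same block, and that the initial case $d-s=1$ of the induction (already handled in \Cref{Row Section - Induction Setting}) is what the recursion bottoms out on.
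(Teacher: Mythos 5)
Your proposal follows essentially the same route as the paper's proof: view $E_1$ over $R'=k[x_1,\ldots,x_{d-1}]$, check via \Cref{Gs not Gs+1} that it satisfies the row-case hypotheses with parameter $d-s-1$, apply the induction hypothesis, match the Jacobian-dual blocks $B'$ and $C_1$ through the transition equations, and extend along $R'\to R'[x_d]$ to get the statement over $R$ together with the Cohen--Macaulay, domain, and dimension claims. The only slip is the parenthetical guess that $I_s(B')\subseteq I_{s+1}(C_1)$ --- expanding an $(s+1)$-minor of $C_1$ along its last row only yields $(x_{s+1},\ldots,x_{d-1})\cdot I_s(B')\subseteq I_{s+1}(C_1)$ --- but this plays no role, since the induction hypothesis already delivers both summands and one only needs to identify the matrices, as you indicate.
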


\begin{proof}
Recall that by \Cref{Gs not Gs+1}, $E_1$ satisfies $G_s$, but not $G_{s+1}$, as an $R$-module. Moreover, from the shape of $\varphi$ and its submatrix $\varphi_1$ in (\ref{Row Section - phi and B(phi)}), an argument similar to the one in \Cref{Column Setting - phi_1 and E_1 prop} shows that $E_1$ satisfies $G_s$, but not $G_{s+1}$, as an $R'$-module as well, where $R'=k[x_1,\ldots, x_{d-1}]$. Hence $E_1$ satisfies the assumptions of \Cref{Row Case Setting} as an $R'$-module. Moreover, as $R'=k[x_1,\ldots, x_{d-1}]$, we see that $\dim R' -s = d-s-1$. Hence by the induction hypothesis in \Cref{Row Section - Induction Setting}, the Rees ring of $E_1$, as an $R'$-module, is
$$\R_{R'}(E_1)\cong R'[T_1,\ldots,T_n]/\J_1$$
where 
$$\J_1= \L_1 + I_{s}(B')+I_{s+1}(C_1)$$
with $\L_1 = (\ell_1,\ldots,\ell_{n-e-1})$ 
and $[\ell_1 \ldots \ell_{n-e-1}] = [T_1 \ldots T_n] \cdot \varphi_1$. Indeed, 
observe that $B'$ and $C_1$ satisfy the matrix equations 
$$[\ell_1 \ldots \ell_{n-(e+1)-(d-s-1)}] = [x_1\ldots x_s] \cdot B' \quad \quad \text{and} \quad \quad    [\ell_1 \ldots \ell_{n-e-1}] = [x_1 \ldots x_s \,\,\, T_n] \cdot C_1$$
similar to those in (\ref{Row Case - Transition}), noting that $\rank E_1=e+1$ and $\dim R'=d-1$. Hence $n-(e+1) -(d-s-1) = n-e-(d-s)$ and the equations above agree with (\ref{Row Case - Transition}). Alternatively, again noting that $\dim R'-s=d-s-1$, from \Cref{B' and C matrices} the first matrix required here may be realized as the Jacobian dual of $(\varphi_1)_{d-s-1} = \varphi_{d-s}$, which is again $B'$ by \Cref{B' and C matrices}. 

As the ideal $\J_1$ above is the ideal defining the Rees ring of $E_1$ as an $R'$-module, its extension to $R[T_1,\ldots,T_n]$ is the defining ideal of the Rees ring of $E_1$ as an $R$-module. In other words, viewing $\J_1$ as an ideal of $R[T_1,\ldots,T_n]$, we have
$$\R(E_1)\cong R[T_1,\ldots,T_n]/\J_1 .$$
Finally, since $\R(E_1) \cong \R_{R'}(E_1)[x_d]$ and $\R_{R'}(E_1)$ is Cohen-Macaulay by the induction hypothesis, it follows that $\R(E_1)$ is a Cohen-Macaulay domain of dimension $d+e+1$.
\end{proof}

Similarly to the previous section, we regard $E_1$ as an $R$-module so as to consider the epimorphism of $R$-algebras $\R(E_1)\rightarrow \R(E)$, as outlined in \Cref{sec:preliminaries}. Notice that the kernel of this map is precisely $\J/\J_1$, which is a prime $\R(E_1)$-ideal of height one. Inspired by analogous constructions used in \cite{BM16,KPU11,Weaver23,Weaver24}, we now introduce certain ideals in $\R(E_1)$ to study this kernel. To this end, the following notation will be useful.

\begin{notat}\label{K notation}
Let $\overline{\,\cdot\,}$ denote images modulo $\J_1$ in $\R(E_1)$. Let $C'$ denote the $s \times(n-e-1)$ submatrix of $C_1$ obtained by deleting its last row (equivalently, $C'$ is obtained from $C$ by deleting its last row and column). Consider the $R[T_1,\ldots,T_n]$-ideal $\,\K \coloneq (\ell_1,\ldots,\ell_{n-e-1}) +I_s(C')+(T_n)$. 
\end{notat}

\begin{prop}\label{Kbar CM}
The $\R(E_1)$-ideal $\overline{\K}$ is a Cohen-Macaulay ideal of height one.
\end{prop}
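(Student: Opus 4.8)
The plan is to show that $\overline{\K}$ is an unmixed, height-one, Cohen-Macaulay ideal of $\R(E_1)$ by computing the quotient $\R(E_1)/\overline{\K}$ and showing it is Cohen-Macaulay of the expected dimension. First I would rewrite $\R(E_1)/\overline{\K} \cong R[T_1,\ldots,T_n]/(\J_1 + \K)$. Since $T_n \in \K$ and also $\ell_1,\ldots,\ell_{n-e-1} \in \K$, modulo $T_n$ the generators $\ell_i$ and the entries of $C_1$ simplify: with $T_n \equiv 0$, the matrix $C$ (hence $C_1$) loses its last row contribution coming from $T_n$, so $I_{s+1}(C_1) \equiv I_{s+1}(C')$ up to the ideal $(x_{s+1},\ldots,x_{d-1})$-type terms; more precisely $I_s(C') \subseteq \K$ already, and one checks $I_{s+1}(C_1) \subseteq I_s(C') + (T_n)$ by Laplace expansion along the last column of $C_1$ (whose last entry is $T_n$, forcing any nonvanishing $(s+1)$-minor modulo $T_n$ to use $s$ rows of $C'$). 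Similarly $I_s(B') \subseteq I_s(C')$ since $B'$ is a submatrix of $C'$. Thus $\J_1 + \K = (\ell_1,\ldots,\ell_{n-e-1}) + I_s(C') + (T_n)$, and I would identify this with the defining ideal of $\mathcal{F}$ of an appropriate module plus one more regular element, using the structure established in the previous results.

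The cleanest route is to reinterpret $(\ell_1,\ldots,\ell_{n-e-1}) + I_s(C')$ via \Cref{B'hgt} and \Cref{Jasat}-type saturation arguments. Note $C'$ is the Jacobian dual of $\varphi_1$ with respect to $x_1,\ldots,x_s$ in the sense that $[\ell_1\ldots\ell_{n-e-1}] = [x_1\ldots x_s]\cdot C'$ — this follows from the transition equation $[\ell_1\ldots\ell_{n-e}] = [x_1\ldots x_s\ T_n]\cdot C$ in (\ref{Row Case - Transition}) by deleting the last column and the $T_n$ row. Therefore, exactly as in the proof of \Cref{B'hgt}, since $\L:(x_1,\ldots,x_s)^\infty$ is prime of height $n-e$ (by \Cref{Jasat}), applying \cite[2.2]{BM16} repeatedly gives that $(\ell_1,\ldots,\ell_{n-e-1}):(x_1,\ldots,x_s)^\infty$ is prime of height $n-e-1$, and then \cite[2.4]{BM16} yields that $I_s(C')$ is a prime ideal of height $(n-e-1)-s+1 = n-e-s$, which is Cohen-Macaulay by \cite[A2.13]{Eisenbud} and \cite[Thm.~1]{EN62}. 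Combined with $(\ell_1,\ldots,\ell_{n-e-1})$, one obtains that $(\ell_1,\ldots,\ell_{n-e-1}) + I_s(C')$ defines a Cohen-Macaulay ring — in fact this is the defining ideal of $\R_{R'}(E_1)$ intersected appropriately, so its Cohen-Macaulayness follows from \Cref{R(E1) row case}, and adding the variable $T_n$ (which one checks is a nonzerodivisor on $R[T_1,\ldots,T_n]/(\ell_1,\ldots,\ell_{n-e-1},I_s(C'))$, since $T_n$ does not lie in the relevant prime because $\R(E_1)$ is a domain of dimension $d+e+1 > 0$ and $T_n \notin \J_1$) keeps it Cohen-Macaulay and drops the dimension by one.

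Putting the dimension count together: $\R(E_1)$ has dimension $d+e+1$, and $\R(E_1)/\overline{\K}$ has dimension equal to that of $k[T_1,\ldots,T_n]/I_s(C')$ after killing $T_n$ and the $x_i$'s appropriately — tracking carefully, $\dim \R(E_1)/\overline{\K} = d+e$, so $\hgt \overline{\K} = 1$, and Cohen-Macaulayness of $\overline{\K}$ as an $\R(E_1)$-ideal follows from Cohen-Macaulayness of both $\R(E_1)$ and $\R(E_1)/\overline{\K}$ together with the height-one statement (an ideal $I$ in a Cohen-Macaulay ring with $R/I$ Cohen-Macaulay and $\hgt I = 1$ is itself a Cohen-Macaulay module). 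The main obstacle I anticipate is the careful bookkeeping in showing $\J_1 + \K = (\ell_1,\ldots,\ell_{n-e-1}) + I_s(C') + (T_n)$ — in particular verifying the minor containments $I_{s+1}(C_1) \subseteq I_s(C') + (T_n)$ and $I_s(B') \subseteq I_s(C')$ cleanly, and making sure the dimension count produces exactly $d+e$ rather than something off by the number of "extra" variables $x_{s+1},\ldots,x_{d-1}$; the resolution is that these variables appear genuinely (the entries $*$ of $\varphi$ and the bottom row of $C$ involve them) so they are not free over the quotient, which is precisely what forces the height to be $1$ and not larger.
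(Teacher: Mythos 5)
Your overall strategy (absorb $\J_1$ into $\K$, rewrite $\K$, exhibit the quotient as Cohen--Macaulay of the right dimension via a residual-intersection/colon argument, then read off $\hgt\overline{\K}=1$) is the same as the paper's, but it hinges on a matrix identity that is false. You claim $[\ell_1\ldots\ell_{n-e-1}]=[x_1\ldots x_s]\cdot C'$ "by deleting the last column and the $T_n$ row" of the transition equation $[\ell_1\ldots\ell_{n-e}]=[x_1\ldots x_s\ T_n]\cdot C$. Deleting the last row of $C_1$ discards its entries $(0,\ldots,0,x_{s+1},\ldots,x_{d-1})$, which are not zero: for $j>n-e-(d-s)$ one has $\ell_j=[x_1\ldots x_s]\cdot(\text{col}_j\,C')+T_nx_{s+i}$ for the appropriate $i$, so the identity fails for the last $d-s-1$ of the $\ell_j$. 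Consequently your appeal to \cite[2.4]{BM16} to conclude that $I_s(C')$ is prime of height $n-e-s$ has no basis (indeed $I_s(C')\not\subseteq\J_1=\L_1:(x_1,\ldots,x_s)$ in general, and the height you can actually extract is only $\geq n-e-s-1$), and the identification of $(\ell_1,\ldots,\ell_{n-e-1})+I_s(C')$ with a colon ideal, hence its Cohen--Macaulayness and the dimension count, all collapse. A smaller slip in the same vein: the last entry of the last column of $C_1$ is $x_{d-1}$, not $T_n$; the containment $I_{s+1}(C_1)\subseteq I_s(C')$ follows by expanding along the last \emph{row} of $C_1$.

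The repair is exactly the point you are missing: since $T_n\in\K$, you may replace each generator by its image under $T_n\mapsto 0$, obtaining $\widetilde{\ell}_1,\ldots,\widetilde{\ell}_{n-e-1}$ and a matrix $\widetilde{C'}$ with entries in $R[T_1,\ldots,T_{n-1}]$ for which the clean identity $[\widetilde{\ell}_1\ldots\widetilde{\ell}_{n-e-1}]=[x_1\ldots x_s]\cdot\widetilde{C'}$ \emph{does} hold and $\K=(\widetilde{\ell}_1,\ldots,\widetilde{\ell}_{n-e-1})+I_s(\widetilde{C'})+(T_n)$. The lower bound $\hgt\K\geq n-e$ comes cheaply from $\J_1\subseteq\K$ (prime of height $n-e-1$ by \Cref{R(E1) row case}) together with $T_n\in\K\setminus\J_1$; this forces $\hgt\bigl((\widetilde{\ell}_1,\ldots,\widetilde{\ell}_{n-e-1})+I_s(\widetilde{C'})\bigr)\geq n-e-1$ in the subring, so $(\widetilde{\ell}_1,\ldots,\widetilde{\ell}_{n-e-1}):(x_1,\ldots,x_s)$ is an $(n-e-1)$-residual intersection of the complete intersection $(x_1,\ldots,x_s)$, and \Cref{thmResInt} gives both the equality with $(\widetilde{\ell}_1,\ldots,\widetilde{\ell}_{n-e-1})+I_s(\widetilde{C'})$ and its Cohen--Macaulayness of height exactly $n-e-1$; adding the regular element $T_n$ finishes the proof. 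No primality or exact height of $I_s(C')$ itself is needed anywhere.
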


\begin{proof}
From the description of $\J_1$ in \Cref{R(E1) row case}, we have the containment $\J_1\subseteq \K$, noting that $I_s(B')\subseteq I_s(C')$. Thus $\hgt \K \geq n-e$, as $\J_1$ is a prime ideal of height $n-e-1$ and $T_n\in \K\setminus \J_1$. We next show that $\hgt \K = n-e$. To this end, notice that, since $T_n\in \K$, there exists an $s\times (n-e-1)$ matrix $\widetilde{C'}$ with entries in $k[T_1,\ldots,T_{n-1}]$ and elements $\widetilde{\ell}_1,\ldots,\widetilde{\ell}_{n-e-1} \in R[T_1,\ldots,T_{n-1}]$ such that 
\begin{equation}\label{K tilde form}
\K = (\widetilde{\ell}_1,\ldots,\widetilde{\ell}_{n-e-1}) +I_s(\widetilde{C'})+(T_n)    
\end{equation}
and $[\widetilde{\ell}_1\ldots\widetilde{\ell}_{n-e-1}] = [x_1\ldots x_s] \cdot \widetilde{C'}$.  Since $\hgt \K \geq n-e$, from (\ref{K tilde form}) above it follows that $\hgt \big( (\widetilde{\ell}_1,\ldots,\widetilde{\ell}_{n-e-1}) +I_s(\widetilde{C'})\big) \geq n-e-1$, as this is an ideal of the subring $R[T_1,\ldots,T_{n-1}]$. However, since
$$(\widetilde{\ell}_1,\ldots,\widetilde{\ell}_{n-e-1}) +I_s(\widetilde{C'}) \subseteq(\widetilde{\ell}_1,\ldots,\widetilde{\ell}_{n-e-1}) : (x_1,\ldots,x_s)$$
it follows that $\hgt \big((\widetilde{\ell}_1,\ldots,\widetilde{\ell}_{n-e-1}) : (x_1,\ldots,x_s)\big) \geq n-e-1$. Thus this ideal is an $(n-e-1)$-residual intersection of $(x_1,\ldots,x_s)$, and so by \Cref{thmResInt} it follows that 
\begin{equation}\label{tilde colon}
(\widetilde{\ell}_1,\ldots,\widetilde{\ell}_{n-e-1}) +I_s(\widetilde{C'}) =(\widetilde{\ell}_1,\ldots,\widetilde{\ell}_{n-e-1}) : (x_1,\ldots,x_s)    
\end{equation}
and this is a Cohen-Macaulay ideal of height exactly $n-e-1$. As $T_n$ is regular modulo this ideal, we then have that $\K$ is Cohen-Macaulay of height $n-e$, as claimed. Thus $\overline{\K}$ is a Cohen-Macaulay $\R(E_1)$-ideal with $\hgt \overline{\K} =1$.
\end{proof}

With the Cohen-Macaulayness of $\overline{\K}$ established, we now show that this ideal is \textit{linked}, in the sense of \cite{Huneke82}, to a particular $\R(E_1)$-ideal. We begin with the following lemma.

\begin{lemma}\label{Is(C')barnonzero}
Writing $\K = (\widetilde{\ell}_1,\ldots,\widetilde{\ell}_{n-e-1}) +I_s(\widetilde{C'})+(T_n)$ as in the proof of \Cref{Kbar CM}, we have $\overline{I_s(\widetilde{C'})}\neq 0$, modulo $\J_1$.
\end{lemma}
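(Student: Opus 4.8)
The plan is to show that $\overline{I_s(\widetilde{C'})} \neq 0$ by a dimension (or height) count: if this ideal vanished modulo $\J_1$, then $\J_1$ would contain $I_s(\widetilde{C'}) + (\widetilde\ell_1,\dots,\widetilde\ell_{n-e-1})$, which in view of \eqref{tilde colon} and the proof of \Cref{Kbar CM} is a Cohen–Macaulay ideal of height exactly $n-e-1$ in $R[T_1,\dots,T_{n-1}]$. But $\J_1$ has height $n-e-1$ in $R[T_1,\dots,T_n]$, hence height $n-e-2$ after intersecting with the subring $R[T_1,\dots,T_{n-1}]$ — a contradiction. So the first move is to extract the relation between $\J_1$ and the ideals living in the subring $R[T_1,\dots,T_{n-1}]$.

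More precisely, first I would observe that $\J_1$ is a prime ideal not containing $T_n$ (indeed $\J_1 \subseteq \K$ and $T_n \in \K \setminus \J_1$, as used in \Cref{Kbar CM}), so $T_n$ is a nonzerodivisor modulo $\J_1$. Suppose for contradiction that $\overline{I_s(\widetilde{C'})} = 0$, i.e. $I_s(\widetilde{C'}) \subseteq \J_1$. Since the entries of $\widetilde{C'}$ and the elements $\widetilde\ell_i$ lie in $R[T_1,\dots,T_{n-1}]$, we get $\big(I_s(\widetilde{C'}) + (\widetilde\ell_1,\dots,\widetilde\ell_{n-e-1})\big) \subseteq \J_1 \cap R[T_1,\dots,T_{n-1}]$. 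Now I would invoke the identity \eqref{tilde colon} from the proof of \Cref{Kbar CM}, which says that $I_s(\widetilde{C'}) + (\widetilde\ell_1,\dots,\widetilde\ell_{n-e-1}) = (\widetilde\ell_1,\dots,\widetilde\ell_{n-e-1}):(x_1,\dots,x_s)$ is an $(n-e-1)$-residual intersection of $(x_1,\dots,x_s)$, hence has height exactly $n-e-1$ inside $R[T_1,\dots,T_{n-1}]$.

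The contradiction then comes from comparing heights. We have $\hgt \J_1 = n-e-1$ in $R[T_1,\dots,T_n]$ by \Cref{R(E1) row case}. Since $T_n \notin \J_1$, the prime $\J_1$ restricts to a prime of $R[T_1,\dots,T_{n-1}]$ of height at most $n-e-2$: indeed $R[T_1,\dots,T_n] = R[T_1,\dots,T_{n-1}][T_n]$ and a prime not containing $T_n$ contracts to a prime of strictly smaller height (equivalently, localizing at $T_n$, $\J_1 R[T_1,\dots,T_n]_{T_n}$ contracts to $\J_1 \cap R[T_1,\dots,T_{n-1}]$, and the going-down/dimension formula for the polynomial extension $S \hookrightarrow S[T_n]$ forces $\hgt(\J_1 \cap R[T_1,\dots,T_{n-1}]) \le n-e-2$). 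But we just exhibited a height-$(n-e-1)$ ideal contained in $\J_1 \cap R[T_1,\dots,T_{n-1}]$, forcing $\hgt(\J_1 \cap R[T_1,\dots,T_{n-1}]) \ge n-e-1$. This is the desired contradiction, so $\overline{I_s(\widetilde{C'})} \neq 0$.

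The main obstacle — and the step to state carefully — is the height drop $\hgt(\J_1 \cap R[T_1,\dots,T_{n-1}]) \le n-e-2$ when $T_n \notin \J_1$; one should make sure this is argued cleanly (it is the standard fact that for a polynomial ring $S[T]$ and a prime $Q$ with $T \notin Q$, one has $\hgt(Q \cap S) \le \hgt Q - 1$, which follows from $\dim(S[T]/Q) = \dim(S/(Q\cap S))$ together with $\dim S[T] = \dim S + 1$ and catenarity of these finitely generated $k$-algebras). Everything else is a direct appeal to \Cref{thmResInt} and the identity \eqref{tilde colon} already established in the proof of \Cref{Kbar CM}. An alternative, perhaps cleaner, route avoiding the restriction argument entirely: if $I_s(\widetilde{C'}) \subseteq \J_1$, then $\K = (\widetilde\ell_1,\dots,\widetilde\ell_{n-e-1}) + I_s(\widetilde{C'}) + (T_n) \subseteq \J_1 + (T_n)$, so $\hgt \K \le \hgt(\J_1 + (T_n)) = n-e$ with equality only if $T_n \notin \J_1$ — consistent so far — but then $\overline{\K} = \overline{(T_n)}$ would be principal generated by a nonzerodivisor, i.e. $\K = \J_1 + (T_n)$; comparing with \eqref{tilde colon} this would give $(\widetilde\ell_1,\dots,\widetilde\ell_{n-e-1}):(x_1,\dots,x_s) \subseteq \J_1$, and since the left side has height $n-e-1$ in a subring avoiding $T_n$ while $\J_1$ restricted to that subring has height $n-e-2$, we again contradict. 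I would present the first argument as the primary one.
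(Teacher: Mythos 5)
There is a genuine gap, and it occurs at the first substantive step. From the hypothesis $I_s(\widetilde{C'})\subseteq\J_1$ you conclude that $\J_1$ contains the whole ideal $(\widetilde{\ell}_1,\ldots,\widetilde{\ell}_{n-e-1})+I_s(\widetilde{C'})$, justified only by the remark that the $\widetilde{\ell}_i$ lie in $R[T_1,\ldots,T_{n-1}]$. But the elements $\widetilde{\ell}_i$ are \emph{not} in $\J_1$ in general: by construction $\widetilde{\ell}_i=\ell_i-T_n\varphi_{ni}$, where $\varphi_{ni}$ is the bottom entry of the $i$-th column of $\varphi$, and for instance $\varphi_{n,n-e-1}=x_{d-1}$. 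Since $\ell_i\in\L_1\subseteq\J_1$, membership $\widetilde{\ell}_i\in\J_1$ would force $T_n\varphi_{ni}\in\J_1$, hence $\varphi_{ni}\in\J_1$ because $\J_1$ is prime and $T_n\notin\J_1$, hence $\varphi_{ni}=0$ because $\J_1\cap R=0$. So $\widetilde{\ell}_{n-e-1}\notin\J_1$ (indeed, the paper's own proof hinges on $\widetilde{\ell}_{n-e-1}$ being a \emph{regular element} modulo a related ideal), the height-$(n-e-1)$ residual intersection $(\widetilde{\ell}_1,\ldots,\widetilde{\ell}_{n-e-1}):(x_1,\ldots,x_s)$ is not contained in $\J_1\cap R[T_1,\ldots,T_{n-1}]$, and your contradiction evaporates. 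What the hypothesis actually yields is only $I_s(\widetilde{C'})\subseteq\J_1\cap R[T_1,\ldots,T_{n-1}]$; since one can only bound $\hgt I_s(\widetilde{C'})\geq n-e-s-1$, comparing with $\hgt\big(\J_1\cap R[T_1,\ldots,T_{n-1}]\big)=n-e-2$ gives $s\geq 1$, which is no contradiction. This is why a pure dimension count cannot finish the proof: the paper's version of the count (via $I_s(\widetilde{C'})\subseteq I_s(B')$ and \Cref{B'hgt}) eliminates only $s\leq d-3$, and the boundary case $s=d-2$ requires the separate regularity and Cramer's rule argument.

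A secondary problem is the ``standard fact'' you invoke, namely that a prime $Q\subset S[T]$ with $T\notin Q$ satisfies $\hgt(Q\cap S)\leq\hgt Q-1$. This is false: take $Q=(x)\subset k[x][T]$, where $T\notin Q$ but $Q\cap k[x]=(x)$ has the same height as $Q$, and $\dim(S[T]/Q)=\dim(S/(Q\cap S))+1$. The height drops by one precisely when $Q$ is not extended from $S$, which is unrelated to whether $T\in Q$. In the case at hand $\J_1$ is in fact not extended (if it were, then $\ell_{n-e-1}=\widetilde{\ell}_{n-e-1}+x_{d-1}T_n\in\J_1$ would force $x_{d-1}\in\J_1\cap R=0$), so the equality $\hgt\big(\J_1\cap R[T_1,\ldots,T_{n-1}]\big)=n-e-2$ is salvageable with this extra argument --- but it does not repair the main step above. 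The alternative route you sketch suffers from the same conflation: from $\K=\J_1+(T_n)$ and the residual intersection identity one only gets that $(\widetilde{\ell}_1,\ldots,\widetilde{\ell}_{n-e-1}):(x_1,\ldots,x_s)$ lands in the image of $\J_1$ under $T_n\mapsto 0$, not in $\J_1$ itself.
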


\begin{proof}
First note that, by degree considerations, if $I_s(\widetilde{C'})\subseteq \J_1$, then it must be that $I_s(\widetilde{C'})\subseteq I_s(B')$. We show that this is impossible.  Recall from the proof of \Cref{Kbar CM} that the ideal $(\widetilde{\ell}_1,\ldots,\widetilde{\ell}_{n-e-1}) +I_s(\widetilde{C'})$ has height $n-e-1$. As $(\widetilde{\ell}_1,\ldots,\widetilde{\ell}_{n-e-1}) \subseteq (x_1,\ldots,x_s)$, it follows that $\hgt \big( (x_1,\ldots,x_s)+I_s(\widetilde{C'})\big) \geq n-e-1$. Since $I_s(\widetilde{C'})$ is an ideal of the subring $k[T_1,\ldots,T_{n-1}]$, this implies that $\hgt I_s(\widetilde{C'}) \geq n-e-s-1$. Thus if $I_s(\widetilde{C'})\subseteq I_s(B')$, then by \Cref{B'hgt} we have that $n-e-s-1 \leq n-e-d+1$, hence $s\geq d-2$. However, from \Cref{Row Section - Induction Setting} we have that $s\leq d-2$, so we only need to consider the case that $s=d-2$.

So, suppose that $I_s(\widetilde{C'})\subseteq I_s(B')$ where $s=d-2$, and note that then $d\geq 4$, as $s\geq 2$ by assumption. Write $\widetilde{B'}$ for the submatrix of $\widetilde{C'}$ consisting of the first $n-e-2$ columns, which coincides with the matrix obtained from $B'$ by extracting its terms containing $T_n$. Notice that
$$I_{d-2}(\widetilde{C'})\subseteq I_{d-2}(B') \subseteq I_{d-2}(B')+(T_n) = I_{d-2}(\widetilde{B'})+(T_n),$$
hence it follows that $I_{d-2}(\widetilde{C'}) \subseteq I_{d-2}(\widetilde{B'})$ as both matrices consist of entries in $k[T_1,\ldots,T_{n-1}]$. Thus we have that $I_{d-2}(\widetilde{C'}) = I_{d-2}(\widetilde{B'})$, as $\widetilde{B'}$ is a submatrix of $\widetilde{C'}$.

With this, we next show that $\widetilde{\ell}_{n-e-1}$ is a regular element modulo $(\widetilde{\ell}_1,\ldots,\widetilde{\ell}_{n-e-2}) +I_{d-2}(\widetilde{B'})$. Recall from the proof of \Cref{B'hgt}(b) that $(\ell_1,\ldots,\ell_{n-e-2}):(x_1,\ldots,x_{d-2})^\infty$ is a prime ideal of height $n-e-2$. Hence by \cite[2.4]{BM16} and (\ref{Row Case - Transition}), we see that
$$(\ell_1,\ldots,\ell_{n-e-2}):(x_1,\ldots,x_{d-2})^\infty = (\ell_1,\ldots,\ell_{n-e-2}):(x_1,\ldots,x_{d-2})= (\ell_1,\ldots,\ell_{n-e-2}) +I_{d-2}(B').$$
As this is a prime ideal not containing $T_n$, it follows that 
$$(\ell_1,\ldots,\ell_{n-e-2}) +I_{d-2}(B')+(T_n) =(\widetilde{\ell}_1,\ldots,\widetilde{\ell}_{n-e-2}) +I_{d-2}(\widetilde{B'})+(T_n) $$
and this ideal has height $n-e-1$. Thus we deduce that $\hgt\big((\widetilde{\ell}_1,\ldots,\widetilde{\ell}_{n-e-2}) +I_{d-2}(\widetilde{B'})\big) =n-e-2$ as this ideal belongs to $R[T_1,\ldots,T_{n-1}]$. As $(\widetilde{\ell}_1,\ldots,\widetilde{\ell}_{n-e-2}) +I_{d-2}(\widetilde{B'}) \subseteq (\widetilde{\ell}_1,\ldots,\widetilde{\ell}_{n-e-2}):(x_1,\ldots,x_{d-2})$, by height considerations and \Cref{thmResInt}, it follows that 
$$(\widetilde{\ell}_1,\ldots,\widetilde{\ell}_{n-e-2}):(x_1,\ldots,x_{d-2}) = (\widetilde{\ell}_1,\ldots,\widetilde{\ell}_{n-e-2}) +I_{d-2}(\widetilde{B'})$$
with height exactly $n-e-2$, and this ideal is Cohen-Macaulay. Now, since $I_{d-2}(\widetilde{C'}) = I_{d-2}(\widetilde{B'})$ and $(\widetilde{\ell}_1,\ldots,\widetilde{\ell}_{n-e-1}) +I_{d-2}(\widetilde{C'})$ is a Cohen-Macaulay ideal of height $n-e-1$, the height calculation above implies that $\widetilde{\ell}_{n-e-1}$ is a regular element modulo $(\widetilde{\ell}_1,\ldots,\widetilde{\ell}_{n-e-2}) +I_{d-2}(\widetilde{B'})$, as claimed.

Having shown that $\widetilde{\ell}_{n-e-1}$ is regular modulo $(\widetilde{\ell}_1,\ldots,\widetilde{\ell}_{n-e-2}) +I_{d-2}(\widetilde{B'})$, we now use this to reach the desired contradiction. Observe that, as $I_{d-2}(\widetilde{B'}) = I_{d-2}(\widetilde{C'})\neq 0$, there exists a nonzero minor of $\widetilde{B'}$ and, without loss of generality, we may assume this is the minor consisting of the first $d-2$ columns. Hence, there is a nonzero $(d-3)\times (d-3)$ minor $\delta$ within the first $d-3$ columns (recall that $d\geq 4$). Assume $\delta$ is obtained by deleting row $i$, for some $1\leq i\leq d-2$. Let $\Delta$ denote the minor of $\widetilde{C'}$ consisting of the first $d-3$ columns of $\widetilde{B'}$ and the last column of $\widetilde{C'}$. 

Notice that $x_i \Delta \in (x_1,\ldots,x_{d-2}) I_{d-2}(\widetilde{C'}) = (x_1,\ldots,x_{d-2}) I_{d-2}(\widetilde{B'})\subseteq (\widetilde{\ell}_1,\ldots,\widetilde{\ell}_{n-e-2})$, recalling that $I_{d-2}(\widetilde{C'}) = I_{d-2}(\widetilde{B'})$. Moreover, by Cramer's rule, modulo $(\widetilde{\ell}_1,\ldots,\widetilde{\ell}_{d-3})$ we have  $x_i\Delta \equiv (-1)^{i+d-2}\widetilde{\ell}_{n-e-1}\delta$ and so it follows that
$$\widetilde{\ell}_{n-e-1}\delta \subseteq (\widetilde{\ell}_1,\ldots,\widetilde{\ell}_{n-e-2})+I_{d-2}(\widetilde{B'}).$$
However, recall that $\widetilde{\ell}_{n-e-1}$ is regular modulo $(\widetilde{\ell}_1,\ldots,\widetilde{\ell}_{n-e-2})+I_{d-2}(\widetilde{B'})$. Hence it must be that 
$\delta \in (\widetilde{\ell}_1,\ldots,\widetilde{\ell}_{n-e-2})+I_{d-2}(\widetilde{B'})$. But this is impossible by degree considerations, and so this is a contradiction.
\end{proof}

\begin{prop}\label{colons}
    With $\K$ as in \Cref{K notation}, we have that $\overline{\K}$ and $\overline{(x_1,\ldots,x_s,T_n)}$ are linked through the regular element $\overline{T_n}$, namely $\overline{(x_1,\ldots,x_s,T_n)} = \overline{(T_n)}:\overline{\K}$ and $\overline{\K} = \overline{(T_n)}:\overline{(x_1,\ldots,x_s,T_n)}$. 
 In particular, the $\R(E_1)$-ideal $\overline{(x_1,\ldots,x_s,T_n)}$ is unmixed of height one.
\end{prop}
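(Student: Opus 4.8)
The statement is a linkage assertion, so the plan is to exhibit $\overline{\K}$ and $\overline{\q}$, where $\q:=(x_1,\ldots,x_s,T_n)$, as linked ideals through the principal ideal $\overline{(T_n)}$, and then to read off the two colon identities and the unmixedness from the classical linkage theorem.

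First I would check the hypotheses of linkage. The element $\overline{T_n}$ is a nonzerodivisor on $\R(E_1)$, since by \Cref{R(E1) row case} this ring is a Cohen--Macaulay domain and $T_n\notin\J_1$: indeed $(T_n)+\J_1\subseteq\K$ has height at least $n-e$ by the proof of \Cref{Kbar CM}, while $\hgt\J_1=n-e-1$. Thus $\overline{(T_n)}$ is a height-one ideal contained in both $\overline{\K}$ and $\overline{\q}$. Next I would establish the product relation $\overline{\q}\cdot\overline{\K}\subseteq\overline{(T_n)}$. Because $(\ell_1,\ldots,\ell_{n-e-1})=\L_1\subseteq\J_1$, we have $\overline{\K}=\overline{I_s(C')}+\overline{(T_n)}$, so it is enough to show $x_j\,I_s(C')\subseteq\J_1+(T_n)$ for $1\le j\le s$. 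From the matrix equation $[\ell_1\ldots\ell_{n-e-1}]=[x_1\ldots x_s\,\,T_n]\cdot C_1$ of \Cref{R(E1) row case} and the shape of $C_1$ coming from (\ref{Row Section - B'' and C}), one has $\ell_i\equiv\sum_{k=1}^s x_k\,C'_{ki}\pmod{T_n}$; Cramer's rule then puts $x_j$ times every maximal minor of $C'$ into $(\ell_1,\ldots,\ell_{n-e-1})+(T_n)\subseteq\J_1+(T_n)$. Hence $\overline{\q}\subseteq\overline{(T_n)}:\overline{\K}$ and, symmetrically, $\overline{\K}\subseteq\overline{(T_n)}:\overline{\q}$.

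Now I would invoke the classical linkage theorem (Peskine--Szpiro; cf.\ \cite{Huneke82}): in the Cohen--Macaulay ring $\R(E_1)$, since $\overline{\K}$ is a Cohen--Macaulay ideal of height one by \Cref{Kbar CM} and contains the regular element $\overline{T_n}$ generating a height-one ideal, the ideal $\overline{(T_n)}:\overline{\K}$ is Cohen--Macaulay --- in particular unmixed --- of height one, and linkage is involutive: $\overline{(T_n)}:\big(\overline{(T_n)}:\overline{\K}\big)=\overline{\K}$. Therefore it suffices to prove the single identity $\overline{(T_n)}:\overline{\K}=\overline{\q}$; the other colon equality and the unmixedness of $\overline{\q}$ then follow at once. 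The inclusion $\overline{\q}\subseteq\overline{(T_n)}:\overline{\K}$ is already in hand.

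The reverse inclusion $\overline{(T_n)}:\overline{\K}\subseteq\overline{\q}$ is, I expect, the main obstacle. My approach would be to pass to the Cohen--Macaulay quotient $S:=\R(E_1)/(\overline{T_n})$: using $\overline{\K}=\overline{I_s(C')}+\overline{(T_n)}$, the ideal $\overline{(T_n)}:\overline{\K}$ becomes $\ann_S\!\big(I_s(\widetilde{C'})S\big)$ while $\overline{\q}$ becomes $(x_1,\ldots,x_s)S$, so one must show $\ann_S\!\big(I_s(\widetilde{C'})S\big)=(x_1,\ldots,x_s)S$. Equivalently, as $\overline{(T_n)}:\overline{\K}$ is unmixed of height one, it is enough to localize at each of its (minimal, height-one) associated primes and verify the inclusion there, the only nontrivial case being a prime that contains $\overline{\K}$. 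In either formulation the computation is driven by the residual-intersection identity $(\widetilde{\ell}_1,\ldots,\widetilde{\ell}_{n-e-1}):(x_1,\ldots,x_s)=(\widetilde{\ell}_1,\ldots,\widetilde{\ell}_{n-e-1})+I_s(\widetilde{C'})$ already used in the proof of \Cref{Kbar CM} (through \Cref{thmResInt}), together with the non-vanishing $\overline{I_s(\widetilde{C'})}\neq0$ from \Cref{Is(C')barnonzero} --- precisely the fact that prevents $\ann_S(I_s(\widetilde{C'})S)$ from being strictly larger than $(x_1,\ldots,x_s)S$. Once $\overline{(T_n)}:\overline{\K}=\overline{\q}$ is established, the linkage theorem delivers $\overline{\K}=\overline{(T_n)}:\overline{\q}$ as well as the Cohen--Macaulayness --- hence unmixedness --- of $\overline{\q}$.
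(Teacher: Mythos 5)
Your opening moves are sound: the Cramer's-rule verification of $\overline{(x_1,\ldots,x_s,T_n)}\cdot\overline{\K}\subseteq\overline{(T_n)}$, hence of both colon containments, is exactly right and matches the paper's starting point. The problem is how you propose to upgrade the containments to equalities. Your plan is to prove the single identity $\overline{(T_n)}:\overline{\K}=\overline{(x_1,\ldots,x_s,T_n)}$ and then obtain the other equality and the Cohen--Macaulayness of the colon from the ``classical linkage theorem,'' in particular from the involutivity $\overline{(T_n)}:\bigl(\overline{(T_n)}:\overline{\K}\bigr)=\overline{\K}$. That theorem (Peskine--Szpiro) requires the ambient ring to be \emph{Gorenstein}; \Cref{R(E1) row case} only gives that $\R(E_1)$ is Cohen--Macaulay, and it is typically not Gorenstein (compare the type computation in the proof of \Cref{fiber type prop}). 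In a merely Cohen--Macaulay ring, $(t):\bigl((t):I\bigr)$ can strictly contain $I$ even when $R/I$ is Cohen--Macaulay, so the second colon equality does not come ``for free'' from the first. What does survive without Gorenstein is only unmixedness of a colon $\overline{(T_n)}:J$ (its associated primes lie among those of $\overline{(T_n)}$), which is all that \cite[0.1]{Huneke82} is invoked for in the paper --- and only \emph{after} both equalities are established.

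The remaining equality, which you yourself flag as ``the main obstacle,'' is not actually carried out, and the reduction you offer for it is invalid: to prove $\overline{(T_n)}:\overline{\K}\subseteq\overline{(x_1,\ldots,x_s,T_n)}$ one must localize at the associated primes of the \emph{smaller} ideal $\overline{(x_1,\ldots,x_s,T_n)}$ (precisely the primes not yet under control), not at those of the unmixed ideal $\overline{(T_n)}:\overline{\K}$; checking an inclusion $J\subseteq I$ at $\ass(J)$ proves nothing, as $(x^2,xy)\subsetneq(x)$ in $k[x,y]$ already shows. The paper avoids both difficulties by proving the two equalities simultaneously: the obstruction module for one equality is annihilated by $\overline{\K}$ and for the other by $\overline{(x_1,\ldots,x_s,T_n)}$, so any offending associated prime would have to contain $\overline{\K}+\overline{(x_1,\ldots,x_s,T_n)}=\overline{I_s(\widetilde{C'})}+\overline{(x_1,\ldots,x_s,T_n)}$, and by \Cref{Is(C')barnonzero} together with the residual-intersection identity (\ref{tilde colon}) this sum has height at least two, while the relevant associated primes of the Cohen--Macaulay ideal $\overline{\K}$ have height one. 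You correctly identified \Cref{Is(C')barnonzero} and (\ref{tilde colon}) as the decisive inputs, but the argument converting them into the two colon equalities is missing, and the Gorenstein-dependent shortcut you substitute for it does not apply here.
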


\begin{proof}
From the construction of $\K$, it is clear that we have the containment $\overline{(x_1,\ldots,x_s,T_n)}\, \overline{\K} \subseteq \overline{(T_n})$, hence $\overline{(x_1,\ldots,x_s,T_n)} \subseteq \overline{(T_n)}:\overline{\K}$ and also $ \overline{\K}\subseteq \overline{(T_n)}:\overline{(x_1,\ldots,x_s,T_n)}$. In order to conclude these containments are actually equalities, it suffices to show they agree locally at the associated primes of the smaller ideal. To this end, it suffices to show  that $\overline{\K}$ and $\overline{(x_1,\ldots,x_s,T_n)}$ have no common associated prime. 

Suppose that there is some $\q\in\spec(\R(E_1))$ with $\q \in \ass(\overline{\K})\bigcap \ass\big(\overline{(x_1,\ldots,x_s,T_n)}\big)$. From \Cref{Kbar CM}, we see that $\q\in \ass \overline{\K} = \min \overline{\K}$, hence $\hgt \q =1$. Moreover, as $\q \supseteq \overline{(x_1,\ldots,x_s,T_n)}\neq 0$, it follows that $\q$ is a minimal prime of this ideal and so $\hgt \overline{(x_1,\ldots,x_s,T_n)}=1$ as well. Thus $\q$ is an ideal of height one which contains $\overline{\K}+\overline{(x_1,\ldots,x_s,T_n)} = \overline{I_s(\widetilde{C'})} +\overline{(x_1,\ldots,x_s,T_n)}$. However, from (\ref{tilde colon}) and \Cref{Is(C')barnonzero} it follows that, modulo $\overline{I_s(\widetilde{C'})}\neq 0$, $\overline{(x_1,\ldots,x_s,T_n)}$ contains a regular element. Hence $\hgt\big(\overline{\K}+\overline{(x_1,\ldots,x_s,T_n)}\big) \geq 2$, noting that $\R(E_1)$ is a Cohen-Macaulay domain by \Cref{R(E1) row case}. However, this is a contradiction.

Now that $\overline{\K}$ and $\overline{(x_1,\ldots,x_s,T_n)}$ have been shown to be linked, the last statement follows from \cite[0.1]{Huneke82}.
\end{proof}

With the $\R(E_1)$-ideal $\overline{\K}$ and \Cref{colons}, we now introduce a divisorial ideal that will ultimately pave the path to the defining ideal $\J$ of $\R(E)$. As previously noted, similar constructions and techniques have been applied in \cite{BM16,KPU11,Weaver23,Weaver24}. 

\begin{notat}
Consider the divisorial ideal $\D = \frac{\overline{\ell_{n-e}}\, \overline{\K}}{\overline{T_n}}$ and note that, since $\ell_{n-e} \in  (x_1,\ldots,x_s,T_n)$, this is actually an $\R(E_1)$-ideal, by \Cref{colons}.
\end{notat}

Notice that, since $\R(E_1)$ is a domain and both $\overline{\ell_{n-e}}\neq 0$ and $\overline{T_n}\neq 0$, the ideals $\overline{\K}$ and $\D$ are isomorphic. In particular, $\D$ is a Cohen-Macaulay $\R(E_1)$-ideal of height one following \Cref{Kbar CM}. Also notice that $\D \subseteq \overline{\J}$ since $\D(\overline{T_n}) \subseteq (\overline{\ell_{n-e}}) \subseteq\overline{\J}$, the ideal $\overline{\J}$ is prime, and $\overline{T_n}\notin \J$. 

\begin{prop}\label{D = Candidate Ideal}
With $B'$ and $C$ the matrices in (\ref{Row Section - B'' and C}) and $\L = (\ell_1,\ldots,\ell_{n-e})$, we have the equality
$$\overline{\L+I_s(B')+I_{s+1}(C)} = \D$$
in $\R(E_1)$.
\end{prop}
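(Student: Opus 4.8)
The plan is to establish the two inclusions $\overline{\L+I_s(B')+I_{s+1}(C)} \subseteq \D$ and $\D \subseteq \overline{\L+I_s(B')+I_{s+1}(C)}$ separately. For the first inclusion, recall from \Cref{R(E1) row case} that modulo $\J_1$ we have $\overline{\L+I_s(B')+I_{s+1}(C)} = \overline{(\ell_{n-e}) + I_{s+1}(C)}$, since $\L_1 + I_s(B') + I_{s+1}(C_1) = \J_1$ and only the last column of $C$ (the one containing $\ell_{n-e}$ in its Cramer expansion) and the generator $\ell_{n-e}$ are new. So it suffices to see that $\overline{\ell_{n-e}} \in \D$ and $\overline{I_{s+1}(C)} \subseteq \D$. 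The containment $\overline{\ell_{n-e}} \in \D$ follows because $T_n$ is one of the generators of $\K$ (see \Cref{K notation}), so $\overline{\ell_{n-e}}\,\overline{T_n} \in \overline{\ell_{n-e}}\,\overline{\K}$, giving $\overline{\ell_{n-e}} \in \D$ after dividing by $\overline{T_n}$. For the $(s+1)$-minors of $C$: each such minor, by Laplace expansion along the last row $[0\ \cdots\ 0\ x_{s+1}\ \cdots\ x_d]$ of $C$, is an $R$-combination of $s$-minors of the top block $[\,B'\ \psi\,]$ of $C$, i.e. of $s$-minors of the matrix whose rows satisfy $[x_1 \ldots x_s]\cdot[\,B'\ \psi\,] = [\ell_1 \ldots \ell_{n-e-1}]$ by the structure of $B(\varphi)$ in (\ref{Row Section - phi and B(phi)}); via Cramer's rule these $s$-minors lie in $(\ell_1,\dots,\ell_{n-e-1}) : (x_1,\dots,x_s) = \J_1 \subseteq$ everything, except that I must be careful which minors are genuinely new modulo $\J_1$. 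The ones involving only $B'$-columns already lie in $I_s(B') \subseteq \J_1$; the genuinely new $(s+1)$-minors are those involving columns of $\psi$, i.e. columns indexed by $T_n$, and these should be expressible through $\overline{\K}$ and $\overline{\ell_{n-e}}/\overline{T_n}$. Concretely, I would argue that every generator of $\overline{I_{s+1}(C)}$ times $\overline{T_n}$ lands in $\overline{\ell_{n-e}}\,\overline{\K}$ by relating the last column of $C$ (which produces $\ell_{n-e}$ via $[x_1 \ldots x_s\ T_n]\cdot C$) to the generators $I_s(C')$ and $\ell_1,\dots,\ell_{n-e-1}$ of $\K$, using the matrix identities in (\ref{Row Case - Transition}).

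For the reverse inclusion $\D \subseteq \overline{\L+I_s(B')+I_{s+1}(C)}$, I would use the description $\D = \overline{\ell_{n-e}}\,\overline{\K}/\overline{T_n}$ together with \Cref{colons}, which gives $\overline{\K} = \overline{(T_n)} : \overline{(x_1,\dots,x_s,T_n)}$. A general element of $\D$ has the form $\overline{\ell_{n-e}}\,\overline{k}/\overline{T_n}$ with $k \in \K$; since $\overline{T_n}$ divides $\overline{\ell_{n-e}}\,\overline{k}$ in the domain $\R(E_1)$, and $\ell_{n-e} \equiv x_{s+1}(\text{last column entry}) + \cdots$ modulo $(x_1,\dots,x_s)$ via the transition equation $[\ell_1 \ldots \ell_{n-e}] = [x_1 \ldots x_s\ T_n]\cdot C$, I would trace through the Cramer-rule relations: writing $k$ as an $R[T]$-combination of $\ell_1,\dots,\ell_{n-e-1}$, of $s$-minors of $C'$, and of $T_n$, multiplying by $\ell_{n-e}$ and dividing by $T_n$ should land each resulting term in $I_{s+1}(C)$ or in $\L + I_s(B')$. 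The $T_n$-term contributes $\ell_{n-e}$ itself (already in $\L$); the $\ell_i$-terms ($i \leq n-e-1$) contribute products $\ell_i\ell_{n-e}/T_n$ which via the $C$-equation are $(s+1)$-minors of $C$; the $I_s(C')$-terms are where the main work is — an $s$-minor $\delta$ of $C'$ times $\ell_{n-e}/T_n$ should be identified, using the shape of $C$ (its last row is $[0\ \cdots\ 0\ x_{s+1}\ \cdots\ x_d]$ and $\ell_{n-e}$ is the $(n-e)$-th coordinate of $[x_1\ldots x_s\ T_n]\cdot C$), with a genuine $(s+1)$-minor of $C$ bordered by the last column, up to a factor that is already in $\J_1$.

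The main obstacle I anticipate is the bookkeeping in this last step: showing precisely that $\overline{\ell_{n-e}}\cdot\overline{I_s(C')}/\overline{T_n} \subseteq \overline{I_{s+1}(C)}$, i.e. that multiplying an $s$-minor of the top-left block $C'$ by $\ell_{n-e}$ and dividing by $T_n$ produces (modulo $\J_1$) an honest $(s+1)$-minor of the full matrix $C$. This requires carefully unwinding the Cramer-rule dependence of $\ell_{n-e}$ on the columns of $C$ and matching indices of rows and columns across the top block and the last row $[0\ \cdots\ 0\ x_{s+1}\ \cdots\ x_d]$; it is essentially a determinantal identity, made delicate by the fact that $T_n$ must divide out cleanly, which is where we use that $\R(E_1)$ is a domain (\Cref{R(E1) row case}) and that $\overline{T_n}$ is regular (\Cref{colons}). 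Once this identity is in hand, both inclusions close and the proposition follows.
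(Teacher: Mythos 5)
Your plan is essentially the paper's proof: both inclusions reduce to the single Cramer-rule identity $\overline{\det M}\cdot\overline{T_n}=\overline{\ell_{n-e}}\cdot\overline{\det M'}$, where $M$ is an $(s+1)$-minor of $C$ involving the last column and $M'$ the corresponding $s$-minor of $C'$ --- exactly the ``determinantal identity'' you flag as the main obstacle, and it follows in one line from the adjugate identity applied to $[x_1\ldots x_s\ T_n]\cdot M=[\ell_{j_1}\ldots\ell_{j_s}\ \ell_{n-e}]$, since $\ell_{j_1},\ldots,\ell_{j_s}\in\L_1\subseteq\J_1$. The only cleanup needed is that your asides are superfluous: the terms $\overline{\ell_i}$ with $i\leq n-e-1$ vanish in $\R(E_1)$, so no identification of $\ell_i\ell_{n-e}/T_n$ with minors is required, and the Laplace-expansion detour can be dropped in favor of the observation that minors of $C$ not involving the last column lie in $I_{s+1}(C_1)\subseteq\J_1$.
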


\begin{proof}
Notice that, by \Cref{R(E1) row case}, we have  $\overline{\L+I_s(B')+I_{s+1}(C)} = (\overline{\ell_{n-e}})+ \overline{I_{s+1}(C)}$ and the only nonzero minors of $\overline{I_{s+1}(C)}$ are those involving the last column. With this, recall that $T_n\in \K$, hence 
\begin{equation}\label{ell equation}
 \overline{\ell_{n-e}} = \frac{\overline{\ell_{n-e}}\, \overline{T_n}}{\overline{T_n}} 
\end{equation}
 and so $ \overline{\ell_{n-e}} \in \D$. Moreover, to show that $\overline{I_{s+1}(C)} \subseteq \D$, we need only show that every $(s+1)\times(s+1)$ minor of $\overline{C}$ involving the last column is contained in $\D$, as the other minors vanish. Without loss of generality, let $M$ denote the submatrix consisting of the first $s$ columns and the last column of $C$. By the matrix equation in (\ref{Row Case - Transition}) and Cramer's rule, in $\R(E_1)$ one has the equality
\begin{equation}\label{Cramer}
\overline{\det M} \cdot \overline{T_n} = \overline{\ell_{n-e}} \cdot \overline{\det M'} 
\end{equation}
where $M'$ is the submatrix of $C'$ consisting of its first $s$ columns. Thus $\overline{\det M} =\frac{ \overline{\ell_{n-e}} \,\overline{\det M'}}{\overline{T_n}}\in \D$ as well, and so it follows that $\overline{\L+I_s(B')+I_{s+1}(C)} \subseteq \D$.

To prove that the reverse containment holds, notice that $\overline{\K} = \overline{I_s(C')}+(\overline{T_n})$. Now, (\ref{ell equation}) shows that $\frac{\overline{\ell_{n-e}}\, \overline{T_n}}{\overline{T_n}} = \overline{\ell_{n-e}} \in \overline{\L+I_s(B')+I_{s+1}(C)}$. Moreover, a similar argument using Cramer's rule and producing an equation similar to (\ref{Cramer}) proves that $\frac{\overline{\ell_{n-e}}\, \overline{I_s(C')}}{(\overline{T_n})} \subseteq \overline{\L+I_s(B')+I_{s+1}(C)}$ as well. 
\end{proof}

Notice that \Cref{D = Candidate Ideal} proves that the ideal $\D$ agrees with the candidate for the defining ideal in \Cref{Defining Ideal Row Case}, modulo $\J_1$ in $\R(E_1)$. Thus the induction argument will be complete once it has been shown that $\D$ also agrees with $\overline{\J}$, and that this is a Cohen-Macaulay ideal. With this, we are ready to prove \Cref{Defining Ideal Row Case}. 

\begin{proof}[Proof of \Cref{Defining Ideal Row Case}]
Notice that $\J_1\subseteq \L+I_s(B')+I_{s+1}(C)\subseteq \J$, where the first containment is a consequence of \Cref{R(E1) row case} and the second follows from Cramer's rule and the transition equations in (\ref{Row Case - Transition}). Thus it suffices to show that $\D= \overline{\J}$ in $\R(E_1)$, by \Cref{D = Candidate Ideal}. Moreover, with this containment, it is enough to prove that $\D$ and $\overline{\J}$ agree locally at the associated primes of $\D$. As noted, $\D$ and $\overline{\K}$ are isomorphic, hence by \Cref{Kbar CM} we only to show that $\D_\q = \overline{\J}_\q$ for any prime $\q$ of $\R(E_1)$ with $\hgt \q =1$. We consider the following cases.

First, suppose that $\q$ is a height one prime ideal in $\R(E_1)$ and $\overline{(x_1,\ldots,x_s)}\nsubseteq \q$. Recall from \Cref{Jasat} that $\J = \L:(x_1,\ldots,x_s)^\infty$, hence $\overline{\J}_\q = \overline{\L}_\q = (\overline{\ell_{n-e}})_\q$. Moreover, since $\overline{(x_1,\ldots,x_s, T_n)}\nsubseteq \q$ as well, it follows from \Cref{colons} that $\R(E_1)_\q = \overline{(T_n)_\q} :\overline{\K}_\q$, hence $\overline{\K}_\q \subseteq \overline{(T_n)}_\q$. However, recall that $T_n \in \K$, and so $\overline{\K}_\q =\overline{(T_n)}_\q$. Thus $\D_\q = \frac{(\overline{\ell_{n-e}})_\q\, \overline{\K}_\q}{\overline{(T_n)}_\q}=(\overline{\ell_{n-e}})_\q$ as well.

Before we consider the case that $\overline{(x_1,\ldots,x_s)}\subseteq \q$, we observe that from the shape of $\varphi$ in (\ref{Row Section - phi and B(phi)}) and the description of $\J_1$ in \Cref{R(E1) row case} it follows that
$$\J_1+(x_1,\ldots,x_s) = (x_{s+1}T_n,\ldots,x_{d-1}T_n) + I_{s}(B')+I_s(C_1) +(x_1,\ldots,x_s).$$
Thus any prime ideal of $\R(E_1)$ containing $\overline{(x_1,\ldots,x_s)}$ contains either $\overline{(T_n)}$ or $\overline{(x_{s+1},\ldots,x_{d-1})}$. In particular, we deduce that the set of minimal primes of $\overline{(x_1,\ldots,x_s)}$ is
\begin{equation}\label{min primes}
\MinPrimes \overline{(x_1,\ldots,x_s)} = \big\{\,\overline{(x_1,\ldots,x_{d-1})}\,\big\}\bigcup \MinPrimes \overline{(x_1,\ldots,x_s,T_n)},    
\end{equation}
noting that $\overline{(x_1,\ldots,x_{d-1})}$ is a prime $\R(E_1)$-ideal of height one. Indeed, this ideal defines the special fiber ring, a domain, of $E_1$ viewed as a module over $R'=k[x_1,\ldots,x_{d-1}]$, as in the proof of \Cref{R(E1) row case}. With this, suppose that $\q$ is a prime of $\R(E_1)$ with height one such that $\overline{(x_1,\ldots,x_s)}\subseteq \q$, and consider the following two cases of (\ref{min primes}) above. 

First, suppose that $\q = \overline{(x_1,\ldots,x_{d-1})}$, and notice that then $\overline{(x_1,\ldots,x_s,T_n)} \nsubseteq \q$. Thus by \Cref{colons} it follows that $\overline{\K}_\q =\overline{(T_n)}_\q$. On the other hand, observe that $\overline{\ell_{n-e}}\notin\q$. Indeed, we have that $(\overline{\ell_{n-e}}) + \q = (\overline{x_d}\,\overline{T_n})+\q$ and neither $\overline{T_n}$ nor $\overline{x_d}$ is contained in $\q$. Hence $\D_\q = \frac{(\overline{\ell_{n-e}})_\q\, \overline{\K}_\q}{\overline{(T_n)}_\q}=(\overline{\ell_{n-e}})_\q = \R(E_1)_\q$. Moreover, as $\D_\q\subseteq \overline{\J}_\q$, it follows that $\overline{\J}_\q= \R(E_1)_\q$ as well.

Lastly, assume that $\q \in \MinPrimes \overline{(x_1,\ldots,x_s,T_n)}$. Notice that $\overline{\K}\nsubseteq\q$ as $\overline{\K}$ and $\overline{(x_1,\ldots,x_s,T_n)}$ have no common associated prime, as noted in the proof of \Cref{colons}. Thus $\overline{\K}_\q = \R(E_1)_\q$ and so  \Cref{colons} implies that $\overline{(x_1,\ldots,x_s,T_n)}_\q = \overline{(T_n)}_\q$. Moreover, observe that $\ell_{n-e} \in (x_1,\ldots,x_s,T_n)$, whence $(\overline{\ell_{n-e})}_\q\subseteq \overline{(x_1,\ldots,x_s,T_n)}_\q = \overline{(T_n)}_\q$. We prove that this containment is an equality. Recall that $\J_1 \subset R'[T_1,\ldots,T_n]$ where $R'=k[x_1,\ldots,x_{d-1}]$, thus it follows that $\overline{x_d}\notin \q$, as $\hgt \q =1$. Hence, in the localization at $\q$, it becomes a unit. Moreover, since $\ell_{n-e} \in (x_1,\ldots,x_s,x_dT_n)$, it follows that $\overline{\ell_{n-e}}$ is a unit multiple of $\overline{T_n}$ locally, hence $(\overline{\ell_{n-e})}_\q = \overline{(T_n)}_\q$. Thus $\D_\q = \frac{(\overline{\ell_{n-e}})_\q\, \overline{\K}_\q}{\overline{(T_n)}_\q}=\overline{\K}_\q = \R(E_1)_\q$. Again noting that $\D_\q\subseteq \overline{\J}_\q$, it follows that $\overline{\J}_\q= \R(E_1)_\q$ as well. 

Now that it has been shown that $\D= \overline{\J}$, recall that this ideal is isomorphic to $\overline{\K}$, which is Cohen-Macaulay by \Cref{Kbar CM}. Thus $\overline{\J}$ is a Cohen-Macaulay $\R(E_1)$-ideal, and so $\R(E) \cong \R(E_1)/\overline{\J}$ is Cohen-Macaulay.
\end{proof}

Now that the defining ideal of Rees ring $\R(E)$ is understood, in the next subsection we proceed to analyze the special fiber ring $\F(E)$.


\subsection{The fiber ring}

With the assumptions of \Cref{Row Case Setting}, recall that the defining ideal of $\R(E)$ is identified in \cref{Defining Ideal Row Case}. From this, we can also determine the ideal defining the special fiber ring $\F(E)$.

\begin{cor}
 \label{F(I)defideal Row}
The special fiber ring of $E$ is $\F(E) \cong k[T_1,\ldots,T_n]/I_s(B')$. Moreover, $\F(E)$ is a Cohen-Macaulay domain with dimension $\ell(E) = d+e-1$. In particular, $E$ has maximal analytic spread.
\end{cor}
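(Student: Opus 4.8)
The plan is to pass to the special fiber by reducing the description of $\J$ from \Cref{Defining Ideal Row Case} modulo $\m = (x_1,\ldots,x_d)$, and to check that exactly the ``mixed'' equations are killed. Concretely, recall $\J = \L + I_s(B') + I_{s+1}(C)$ with $\L = (\ell_1,\ldots,\ell_{n-e})$ and $[\ell_1\ldots \ell_{n-e}] = [T_1\ldots T_n]\cdot\varphi$. First I would note that $\L \subseteq \m R[T_1,\ldots,T_n]$: since $I_1(\varphi) = \m$, every entry of $\varphi$, being a linear form, lies in $\m$, so each $\ell_i$ does as well. Next, from the shape of $C$ in (\ref{Row Section - B'' and C}), whose bottom row is $(0,\ldots,0,x_{s+1},\ldots,x_d)$, any nonzero $(s+1)\times(s+1)$ minor of $C$ must use that bottom row, and cofactor expansion along it expresses such a minor as an $R$-linear combination of $x_{s+1},\ldots,x_d$; hence $I_{s+1}(C) \subseteq (x_{s+1},\ldots,x_d)R[T_1,\ldots,T_n] \subseteq \m R[T_1,\ldots,T_n]$. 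Finally, the entries of $B'$ lie in the subring $k[T_1,\ldots,T_n]$, so $I_s(B')$ is generated by polynomials in the $T_i$ alone. Putting these together yields $\J + \m R[T_1,\ldots,T_n] = I_s(B') + \m R[T_1,\ldots,T_n]$, and therefore
\[
\F(E)\ \cong\ R[T_1,\ldots,T_n]/\bigl(\J + \m R[T_1,\ldots,T_n]\bigr)\ \cong\ k[T_1,\ldots,T_n]/I_s(B').
\]

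The Cohen--Macaulayness, the domain property, and the dimension then follow immediately from \Cref{B'hgt}(b), which asserts that $I_s(B')$ is a Cohen--Macaulay prime ideal of $k[T_1,\ldots,T_n]$ of height $n-e-d+1$: indeed $\F(E)$ is then a Cohen--Macaulay domain of Krull dimension $n - (n-e-d+1) = d+e-1$, so $\ell(E) = d+e-1$. For the final ``in particular'' I would argue that this value is necessarily maximal: $\R(E)$ is a domain (as $R$ is), finitely generated over $k$, while $\m\R(E)$ is a nonzero proper ideal, so $\ell(E) = \dim \F(E) = \dim \R(E)/\m\R(E) \le \dim \R(E) - 1 = d+e-1$; hence equality holds.

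I do not anticipate any genuine obstacle here, as the statement is a corollary of \Cref{Defining Ideal Row Case} together with the height computation already carried out in \Cref{B'hgt}(b). The only point deserving a moment's care is the verification that reduction modulo $\m$ annihilates both $\L$ and $I_{s+1}(C)$ but introduces no new relations among the generators of $I_s(B')$ --- equivalently, that the special fiber ``sees'' precisely the submatrix $B'$ of the Jacobian dual and nothing else --- which is immediate once the three containments above are in place.
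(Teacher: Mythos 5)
Your proposal is correct and follows essentially the same route as the paper: reduce the description $\J=\L+I_s(B')+I_{s+1}(C)$ from \Cref{Defining Ideal Row Case} modulo $\m$, observe that only $I_s(B')$ survives, and invoke \Cref{B'hgt}(b) for the Cohen--Macaulay, prime, and height statements. You simply spell out the containments $\L,\,I_{s+1}(C)\subseteq \m R[T_1,\ldots,T_n]$ and the upper bound $\ell(E)\le d+e-1$ in more detail than the paper does.
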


\begin{proof}
With the defining ideal $\J$ in \Cref{Defining Ideal Row Case}, we see that $\J+(x_1,\ldots,x_d) = I_s(B')+(x_1,\ldots,x_d)$ and the initial claim follows, noting that $B'$ consists of entries in $k[T_1,\ldots,T_n]$. The second assertion then follows from \Cref{B'hgt}.
\end{proof}

Notice that from \Cref{F(I)defideal Column} and \Cref{F(I)defideal Row}, the analytic spread of $E$ differs in the row setting and the column setting; however, they coincide when $s=d-1$, as expected from \Cref{s=d-1 remark}.

Like in the column case (see \Cref{J RI prop - column case}), in addition to the description provided in \Cref{Defining Ideal Row Case}, the defining ideal $\J$ of $\R(E)$ may also be realized as a residual intersection.  

\begin{prop}\label{J RI prop - row case}
 With the assumptions of \Cref{Row Case Setting}, the defining ideal $\J$ of $\R(E)$ may be realized as 
 $$\J=\L:(x_1,\ldots,x_s,x_{s+1}T_n,\ldots,x_d T_n).$$
 Moreover, this is an $(n-e)$-residual intersection.
\end{prop}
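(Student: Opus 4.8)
The plan is to deduce the statement formally from \Cref{Defining Ideal Row Case}, which already identifies $\J = \L:(x_1,\ldots,x_s) = \L + I_s(B') + I_{s+1}(C)$ as a prime ideal of height $n-e$. Write $\a := (x_1,\ldots,x_s,\,x_{s+1}T_n,\ldots,x_dT_n)$. The crux is the observation that $\a = (x_1,\ldots,x_s)+\L$ as ideals of $R[T_1,\ldots,T_n]$. To prove this I would read off the columns of the transition matrix $C$ in (\ref{Row Section - B'' and C}) via the identity $[\ell_1\ldots\ell_{n-e}] = [x_1\ldots x_s\ T_n]\cdot C$ from (\ref{Row Case - Transition}): the first $n-e-d+s$ columns of $C$ lie in $B'$, so $\ell_j \in (x_1,\ldots,x_s)$ for $j \le n-e-d+s$, while the last $d-s$ columns give $\ell_{n-e-d+s+t} - [x_1\ldots x_s]\cdot\psi^{(t)} = x_{s+t}T_n$ for $t = 1,\ldots,d-s$, where $\psi^{(t)}$ denotes the $t$-th column of the block $\psi$ in (\ref{Row Section - phi and B(phi)}). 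Hence $\L\subseteq\a$, and also $x_{s+t}T_n \in (x_1,\ldots,x_s)+\L$ for each $t$; combined with $(x_1,\ldots,x_s)\subseteq\a$ this yields $\a = (x_1,\ldots,x_s)+\L$.

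Once this is in hand the first assertion is immediate: since $f\L\subseteq\L$ for every $f$, one has $\L:\a = \L:\big((x_1,\ldots,x_s)+\L\big) = \L:(x_1,\ldots,x_s) = \J$, the last step being \Cref{Defining Ideal Row Case}. (Alternatively one can argue directly: the inclusion $(x_1,\ldots,x_s)\subseteq\a$ gives $\L:\a \subseteq \L:(x_1,\ldots,x_s) = \J$, while $\J\,\a \subseteq \J(x_1,\ldots,x_s) + \J\L \subseteq \L$, using $\J(x_1,\ldots,x_s)\subseteq\L$ and that $\L$ is an ideal, so $\J\subseteq\L:\a$.)

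For the residual-intersection claim I would simply verify the hypotheses of \Cref{defResInt} with the integer $s$ there taken to be $n-e$ and $I = \a$: the ideal $\L = (\ell_1,\ldots,\ell_{n-e})$ is generated by $n-e$ elements, $\L\subseteq\a$ by the first step, and $\hgt(\L:\a) = \hgt\J = n-e$ by \Cref{Defining Ideal Row Case}; moreover $\hgt\a \le d \le n-e$ since $\a$ is generated by the $d$ elements $x_1,\ldots,x_s,x_{s+1}T_n,\ldots,x_dT_n$ and $n\ge d+e$ by \Cref{rank1setting}. Hence $\J = \L:\a$ is an $(n-e)$-residual intersection of $\a$. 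I expect no serious obstacle: the only genuine work is the bookkeeping in the first paragraph, keeping straight which of the $\ell_j$ already lie in $(x_1,\ldots,x_s)$ and which ones supply the new generators $x_{s+t}T_n$, after which the statement follows formally from \Cref{Defining Ideal Row Case}.
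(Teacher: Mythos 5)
Your proof is correct, and for the first assertion it takes a genuinely different (and slicker) route than the paper. Your key observation is the ideal identity $\a=(x_1,\ldots,x_s)+\L$, which you justify correctly from the shape of $\varphi_R$ in (\ref{matrixshapes}): the first $n-e-d+s$ linear forms $\ell_j$ lie in $(x_1,\ldots,x_s)$, and each remaining $\ell_{n-e-d+s+t}$ differs from $x_{s+t}T_n$ by an element of $(x_1,\ldots,x_s)$. From this, $\L:\a=\L:(x_1,\ldots,x_s)=\J$ is immediate via \Cref{Defining Ideal Row Case}. The paper instead proves only the easy containment $\L:\a\subseteq\L:(x_1,\ldots,x_s)=\J$ from $(x_1,\ldots,x_s)\subseteq\a$, and obtains the reverse containment generator by generator: it uses Cramer's rule twice, once to get $I_{s+1}(C)\subseteq\L:(x_1,\ldots,x_s,T_n)\subseteq\L:\a$, and once with the auxiliary matrix $B''$ of (\ref{B'' matrix}) satisfying $[\ell_1\ldots\ell_{n-e}]=[x_1\ldots x_s\ x_{s+1}T_n\ldots x_dT_n]\cdot B''$ to get $I_s(B')=I_d(B'')\subseteq\L:\a$, and then invokes the explicit generating set $\J=\L+I_s(B')+I_{s+1}(C)$. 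Your argument needs only the colon description of $\J$ and avoids the Cramer's rule bookkeeping entirely; the paper's computation has the side benefit of exhibiting the transition matrix $B''$ that realizes the residual intersection structure concretely. On the residual intersection claim your verification matches the paper's, and you are in fact slightly more careful: you explicitly check the hypothesis $n-e\geq\hgt\a$ of \Cref{defResInt} (via $\hgt\a\leq d\leq n-e$ from $n\geq d+e$), which the paper leaves implicit.
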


\begin{proof}
Notice that we have the containments 
\begin{equation}\label{RI containment}
  \L:(x_1,\ldots,x_s,x_{s+1}T_n,\ldots,x_d T_n)\subseteq \L:(x_1,\ldots,x_s) =\J
\end{equation}
where the equality follows from \Cref{Defining Ideal Row Case}. From Cramer's rule and (\ref{Row Case - Transition}), we have $I_{s+1}(C) \subseteq \L:(x_1,\ldots,x_s,T_n) \subseteq \L:(x_1,\ldots,x_s,x_{s+1}T_n,\ldots,x_d T_n)$. Moreover, using Cramer's rule again, we see $I_s(B')= I_d(B'') \subseteq \L:(x_1,\ldots,x_s,x_{s+1}T_n,\ldots,x_d T_n)$, where $B''$ is the matrix
\begin{equation}\label{B'' matrix}
B''=  \left(\!\begin{array}{ccc|ccc}
& &                 & & & \\
   & B' &              &  & \psi &\\
   & &                  & &  &  \\
   \hline
    0&\cdots&0          & 1\\
    \vdots & &\vdots    && \ddots         \\
    0&\cdots&0          &&&1 
\end{array}\!\right)    
\end{equation}
obtained by replacing the $T_n$ entries of the lower diagonal block of $B(\varphi)$ in (\ref{Row Section - phi and B(phi)}) with $1$s. Notice that this matrix satisfies the transition equation 
\begin{equation}\label{RI transition}
[\ell_1 \ldots \ell_{n-e}] = [x_1\ldots x_s \,\, x_{s+1}T_n\,\, \ldots \,\, x_dT_n] \cdot B''.    
\end{equation}
Thus $\L + I_s(B') +I_{s+1}(C) \subseteq \L:(x_1,\ldots,x_s,x_{s+1}T_n,\ldots,x_d T_n)$, and so by \Cref{Defining Ideal Row Case} we have $\J=\L:(x_1,\ldots,x_s,x_{s+1}T_n,\ldots,x_d T_n)$. Moreover, as $\hgt \J =n-e$ and $\L=(\ell_1,\ldots,\ell_{n-e})$ it follows that this is an $(n-e)$-residual intersection.
\end{proof}

\begin{rem}\label{Row Case - RI remark}
Although the defining ideal may be realized as a residual intersection, this description of $\J$ in not very useful in order to describe the properties of $\J$. Indeed, it is particularly ill-behaved, as the ideal $I=(x_1,\ldots,x_s,x_{s+1}T_n,\ldots,x_d T_n)$ is not unmixed unless $s=d-1$, in which case $I$ is a complete intersection and a generating set of $\J$ may be obtained from \Cref{thmResInt}. Moreover, in this case the residual intersection of \Cref{J RI prop - row case} coincides with the residual intersection of \Cref{J RI prop - column case}, and the main result of \cite{CPW} is recovered. If however $s\leq d-2$, then $I$ is not a complete intersection, and the techniques used in \cite{CPW} fail in the present setting, so the method of successive approximations must be applied instead; compare with \Cref{Column case - RI remark} in the column setting.
\end{rem}

With the bigrading on $R[T_1,\ldots,T_n]$ given by $\bideg x_i=(1,0)$ and $\bideg T_i = (0,1)$, observe that the equations of $I_{s+1}(C)$ in \Cref{Defining Ideal Row Case} have bidegree $(1,s)$. Hence it follows that the module $E$ is not of fiber type, once it has been shown that the equations of $I_{s+1}(C)$ are \textit{minimal} generators of $\J$ when $s\leq d-2$. We prove this in the following proposition, by exploiting the residual intersection property of $\J$.

\begin{prop}\label{fiber type prop}
With the assumptions of \Cref{Row Case Setting}, the module $E$ is of fiber type if and only if $s=d-1$.
\end{prop}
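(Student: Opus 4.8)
The plan is to prove the two implications separately: the reverse one is immediate from the description of $\J$ already obtained, while the forward one reduces to a height comparison after reducing modulo $\p=(x_1,\ldots,x_s)$.

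For the ``if'' direction, suppose $s=d-1$, so that $d-s=1$. As recorded in \Cref{Row Section - Induction Setting}, in this case $I_{s+1}(C)\subseteq I_s(B')$, and hence \Cref{Defining Ideal Row Case} gives $\J=\L+I_s(B')+I_{s+1}(C)=\L+I_s(B')$. Since $\F(E)\cong k[T_1,\ldots,T_n]/I_s(B')$ by \Cref{F(I)defideal Row}, this presents $\J$ as $\L$ together with the preimages of the defining equations of $\F(E)$, so $E$ is of fiber type.

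For the ``only if'' direction, assume $E$ is of fiber type; then $\J=\L+I_s(B')$ by \Cref{F(I)defideal Row}. Pass to $\overline{S}=R[T_1,\ldots,T_n]/\p\,R[T_1,\ldots,T_n]=k[x_{s+1},\ldots,x_d,T_1,\ldots,T_n]$, writing $\overline{\,\cdot\,}$ for images. Combining the transition equation $[\ell_1\ldots\ell_{n-e}]=[x_1\ldots x_s\ T_n]\cdot C$ of \Cref{Row Case - Transition} with the shape of $C$ in \Cref{Row Section - B'' and C}, whose last row is $(0,\ldots,0,x_{s+1},\ldots,x_d)$ and the rest of whose entries lie in $k[T_1,\ldots,T_n]$, and recalling that the first $n-e-(d-s)$ columns of $\varphi$ (see \Cref{Row Section - phi and B(phi)}) have all entries in $A=k[x_1,\ldots,x_s]$ while the last $d-s$ columns have entries in $A$ except for the bottom row $x_{s+1},\ldots,x_d$, reduction modulo $\p$ gives $\overline{\ell_i}=0$ for $1\le i\le n-e-(d-s)$ and $\overline{\ell_{n-e-(d-s)+j}}=x_{s+j}\,\overline{T_n}$ for $1\le j\le d-s$. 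Hence $\overline{\L}=(x_{s+1}\overline{T_n},\ldots,x_d\overline{T_n})$, and therefore
\[
\overline{\J}\;=\;\overline{\L}+I_s(B')\,\overline{S}\;\subseteq\;(\overline{T_n})+I_s(B')\,\overline{S}.
\]
By \Cref{B'hgt}(b) and flatness of $k[T_1,\ldots,T_n]\hookrightarrow\overline{S}$ we have $\hgt I_s(B')\overline{S}=n-e-d+1$, so Krull's height theorem yields $\hgt\overline{\J}\le\hgt\big((\overline{T_n})+I_s(B')\overline{S}\big)\le n-e-d+2$.

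On the other hand, $\overline{S}/\overline{\J}\cong\R(E)/\p\,\R(E)$. Since $\R(E)$ is a domain of dimension $d+e$ and $x_1$ is a nonzero element of it, $\dim\R(E)/\p\,\R(E)\le d+e-1$; and since $\R(E)/\p\,\R(E)$ surjects onto $\R(E)/\m\,\R(E)=\F(E)$, which has dimension $\ell(E)=d+e-1$ by \Cref{F(I)defideal Row}, equality holds. Hence $\hgt\overline{\J}=\dim\overline{S}-(d+e-1)=(n+d-s)-(d+e-1)=n-e-s+1$. Comparing the two estimates forces $n-e-s+1\le n-e-d+2$, i.e. $s\ge d-1$; as $s\le d-1$ by \Cref{rank1setting}, we conclude $s=d-1$. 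The main obstacle is pinpointing a single invariant that detects the extra equations $I_{s+1}(C)$, which are invisible to coarser data: under the fiber-type hypothesis the containment $\overline{\J}\subseteq(\overline{T_n})+I_s(B')\overline{S}$ is exactly what makes $\hgt\overline{\J}$ both computable and ``too small'', yielding the leverage needed; a secondary, routine point is to check in the reduction modulo $\p$ that $\overline{\L}$ is generated precisely by the displayed products $x_{s+j}\overline{T_n}$ and nothing more.
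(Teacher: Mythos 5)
Your proof is correct, but the ``only if'' direction takes a genuinely different route from the paper's. The paper detects the failure of fiber type through the Cohen--Macaulay type of $\R(E)$: it computes $\mu(\omega_{\R(E)})$ once as $\binom{n-e-d+s}{s-1}$ from the Buchsbaum--Kustin--Miller resolution of $\R(E_{d-s})$ (using that $\ell_{n-e-d+s+1},\ldots,\ell_{n-e}$ form a regular sequence modulo $\J_{d-s}$ under the fiber-type hypothesis), and once as $\binom{n-e-1}{s-1}$ from the residual intersection description $\J=\L:I$ of \Cref{J RI prop - row case} via the formula $\omega_{\R(E)}\cong \S^{n-e-s}(I/\L)$; equating the two forces $s=d-1$. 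Your argument instead reduces modulo $\p=(x_1,\ldots,x_s)$ and compares heights: the identity $\hgt\overline{\J}=n-e-s+1$ follows from $\dim\R(E)/\p\R(E)=d+e-1$ (squeezed between $\dim\R(E)/(x_1)$ and $\dim\F(E)$), while the fiber-type hypothesis traps $\overline{\J}$ inside $(T_n)+I_s(B')\overline{S}$, whose height is at most $n-e-d+2$ because $I_s(B')\overline{S}$ is \emph{prime} of height $n-e-d+1$ by \Cref{B'hgt}(b) (this primality is what legitimizes the ``add one element, height grows by at most one'' step, so it is worth saying explicitly). Your approach is more elementary --- it avoids canonical modules, the BKM complex, and the type computation entirely, using only the already-established \Cref{Defining Ideal Row Case}, \Cref{F(I)defideal Row} and \Cref{B'hgt} --- whereas the paper's argument yields the extra information that the two candidate ideals have canonical modules with different minimal numbers of generators, i.e.\ it quantifies \emph{how far} $\L+I_s(B')$ is from being unmixed of the right type. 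Both proofs are sound; yours is shorter and self-contained within \Cref{sec:row section}.
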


\begin{proof}
Recall from \Cref{Defining Ideal Row Case} that the defining ideal of $\R(E)$ is $\J = \L+I_s(B')+I_{s+1}(C)$. From bidegree considerations, it follows that the fiber type property is equivalent to the containment $I_{s+1}(C) \subseteq \L+I_s(B')$ or rather $\J = \L+I_s(B')$. Thus we have already seen that $E$ is of fiber type if $s=d-1$ within the initial case of the induction proof (see also \cite{CPW}), hence we only need to show the converse.

Suppose that $E$ is of fiber type, and so $\J = \L+I_s(B')$. Writing $I =(x_1,\ldots,x_s,x_{s+1}T_n,\ldots,x_d T_n)$, recall from \Cref{J RI prop - row case} that $\J$ may also be realized as the $(n-e)$-residual intersection $\J = \L:I$. 
We prove that the ideals $\L:I$ and $\L+I_s(B')$ can only coincide when $s=d-1$, by showing that otherwise these two descriptions of $\J$ provide two distinct values for the type of $\R(E)$, denoted $r(\R(E))$ \cite{BH93}. We will crucially use the fact that $r(\R(E))$ coincides with the minimal number of generators of the canonical module $\omega_{\R(E)}$ \cite[3.3.11]{BH93}.

Recall that $B'$ is the submatrix of the Jacobian dual $B(\varphi)$ satisfying the transition equation in (\ref{Row Case - Transition}). Thus, $B'$ is the Jacobian dual of the matrix $\varphi_{d-s}$ with respect to the sequence $x_1, \ldots, x_s$. Also, $\varphi_{d-s}$ presents $E_{d-s}$, and by \Cref{Gs Right to Left} $E_{d-s}$ is an $A$-module satisfying $G_s$, where $A=k[x_1,\ldots,x_s]$. 
Hence by \cite[4.11]{SUV03}, the defining ideal of $\R(E_{d-s})$ is $\L_{d-s}:(x_1,\ldots,x_s) = \L_{d-s} +I_s(B')$, where $\L_{d-s}=(\ell_1,\ldots,\ell_{n-e-(d-s)})$. Moreover, this is a Cohen-Macaulay prime ideal of height $n-e-(d-s)$. With this, we obtain that $\J = \L+I_s(B') = \J_{d-s} + (\ell_{n-e-(d-s)+1},\ldots, \ell_{n-e})$. Since $\J$ is a prime ideal of height $n-e$, it then follows that $\ell_{n-e-(d-s)+1},\ldots, \ell_{n-e}$ is a regular sequence modulo $\J_{d-s}$.

As $\J_{d-s} = (\ell_1,\ldots,\ell_{n-e-(d-s)}) +I_s(B')$ is Cohen-Macaulay of height $n-e-(d-s)$, the transition equation (\ref{Row Case - Transition}) and \Cref{B'hgt} show that the complex $\mathbb{T} = \mathbb{T}(\L_{d-s},B')$ of \cite{BKM90} is a minimal free resolution of $\R(E_{d-s})$. In particular, from \cite[3.5]{BKM90}, the last Betti number of $\mathbb{T}$ is $\binom{n-e-d+s}{s-1}$ and we note that this is precisely the type of $\R(E_{d-s})$ \cite[3.3.9]{BH93}. Moreover, as the type of a ring is unchanged modulo a regular sequence \cite[1.2.19]{BH93}, from the previous observation, it follows that $\mu(\omega_{\R(E)})=r(\R(E))=\binom{n-e-d+s}{s-1}$ as well.

On the other hand, as $\J=\L:I$ is a residual intersection, by \cite[5.1]{HU88} it follows that the canonical module $\omega_{\R(E)}$ can be realized as the symmetric power $\omega_{\R(E)} \cong \S_B^{n-e-s}(I/\L)$, where $B=R[T_1,\ldots,T_n]$. Notice that, due to the shape of $\varphi$ in (\ref{Row Section - phi and B(phi)}), we have that $\mu(I/\L)=s$. Indeed, modulo $\L$ the images of $x_{s+1}T_n,\ldots,x_d T_n$ are contained in $(x_1,\ldots,x_s)$. Therefore, we obtain that
$$\mu(\omega_{\R(E)}) = {\rm dim}_k\, \S_B^{n-e-s}(I/\L) \otimes k = {\rm dim}_k\, \S_k^{n-e-s}(k^s) =\binom{n-e-1}{s-1}.$$
Thus, if $\J=\L:I=\L+I_s(B')$ we must have that $\binom{n-e-d+s}{s-1} = \binom{n-e-1}{s-1}$, from which it follows that $s=d-1$.
\end{proof}


\section{Examples and Future Directions}\label{sec: Examples}

In this section, we explore various questions related to modifying the conditions of \Cref{rank1setting}. When these conditions are changed, do the results obtained in this paper remain the same? Additionally, what are some potential next steps that could be taken to further explore the study of defining equations of Rees algebras when a module or ideal only satisfies the condition $G_{s}$ but not $G_{s+1}$?

As noted in \Cref{minFitt} and \Cref{Jasat}, the fact that $\fitt_{s+e-1}(E) = I_{n-s-e+1}(\varphi)$ has a unique minimal prime in \Cref{rank1setting} is crucial to the proofs of \Cref{Defining Ideal Column Case} and \Cref{Defining Ideal Row Case}. Whereas \Cref{minFitt} shows that the assumptions of \Cref{rank1setting} are sufficient to ensure this phenomenon, one might ask if the assumptions of \Cref{rank1setting} are necessary. Moreover, one might ask what the structure of the defining ideal $\J$ is when $\fitt_{s+e-1}(E)$ has more than one minimal prime.

We present some examples here illustrating interesting phenomena related to these questions. Recall from \Cref{G2P prop} that if the rank of $E$ is $e=1$, then $E$ is isomorphic to a perfect $R$-ideal of grade two. Hence, for computational purposes, we provide examples in the case of these ideals, making use of the Hilbert-Burch theorem \cite[20.15]{Eisenbud} throughout. Each of the examples presented here was produced and verified through \textit{Macaulay2} \cite{Macaulay2}.

\begin{ex}\label{exampleRank2}
     Consider the following $6 \times 5$ matrix with entries in $R=\mathbb{Q}[x_1,x_2,x_3,x_4]$:
    $$\varphi = \left(\!\begin{array}{ccccc}
0&0&0&0&x_{2}\\
x_{2}&x_{1}+x_{2}&0&x_{1}+x_{2}&x_{1}\\
0&0&x_{3}&x_{3}&x_{4}\\
0&x_{2}&x_{1}+x_{2}&0&x_{1}+x_{2}\\
x_{4}&x_{3}+x_{4}&0&0&x_{3}\\
0&0&x_{4}&0&x_{1}
\end{array}\!\right)$$
and consider the ideal $I = I_{5}(\varphi)$. One has that $\hgt I=2$, hence $I$ is perfect of height two. Moreover, the ideal $I$ satisfies $G_{2}$ but not $G_{3}$. In this example, $\fitt_2(I) = I_{4}(\varphi)$ has two minimal prime ideals, $\left(x_{1},x_{2}\right)$ and $\left(x_{3},x_{4}\right)$. As $I$ satisfies assumptions (i) and (iii) of \Cref{rank1setting}, it follows from \Cref{minFitt} that modulo any set of two linear forms, $\varphi$ has rank at least $2$. 

With the bigrading on $R[T_1,\ldots,T_5]$ given by $\bideg x_i=(1,0)$ and $\bideg T_i = (0,1)$, the defining ideal $\J$ of $\R(I)$ consists of the equations of $\L = ([T_1\ldots T_5]\cdot \varphi)$ in bidegree $(1,1)$, one equation of bidegree $(1,3)$, two equations of bidegree $(2,2)$, and fiber equations with one of bidegree $(0,3)$, and four of bidegree $(0,4)$. Hence the shape of 
$\J$ does not agree with either of the forms in \Cref{Defining Ideal Column Case} or \Cref{Defining Ideal Row Case}, as it is minimally generated in different bidegrees.
\end{ex}

Whereas, in the example above, the ideal $\fitt_2(I)$ has two minimal primes, they are both complete intersections generated by $s=2$ many linear forms; this is comparable to the behavior discussed in \Cref{minFitt}. However, even this behavior is not guaranteed and, as the next example shows, this Fitting ideal may have multiple minimal primes of differing codimensions.

\begin{ex}\label{exampleRank3}
Consider the following $5 \times 4$ matrix with entries in $R=\mathbb{Q}[x_{1},x_{2},x_{3},x_{4}]$:
$$\varphi = \left(\!\begin{array}{cccc}
x_{1}-x_{2}&x_{2}&x_{2}&x_{1}\\
x_{2}&0&x_{2}&x_{1}\\
x_{1}+x_{2}&0&x_{2}&x_{1}\\
x_{4}&x_{1}&x_{3}&0\\
x_{1}&x_{3}&x_{1}&x_{4}
\end{array}\!\right)$$
and consider the ideal $I=I_4(\varphi)$. One has that $\hgt I =2$, hence $I$ is perfect of height two. Moreover, $I$ satisfies $G_{2}$ but not $G_{3}$. Here, $\fitt_2(I) = I_{3}(\varphi)$ has two minimal primes of different heights, namely $(x_{1},x_{2})$ and $(x_{1},x_{3},x_{4})$. As $I$ satisfies assumptions (i) and (iii) of \Cref{rank1setting}, it follows from \Cref{minFitt} that modulo any set of two linear forms, $\varphi$ has rank at least $2$. For instance, modulo $(x_{1},x_{2})$, $\varphi$ has rank $2$.

With the bigrading on $R[T_1,\ldots,T_5]$ given by $\bideg x_i=(1,0)$ and $\bideg T_i = (0,1)$, the defining ideal $\J$ of $\R(I)$ consists of the equations of $\L = ([T_1\ldots T_5]\cdot \varphi)$ in bidegree $(1,1)$, one equation of bidegree $(2,2)$, and one fiber equation with bidegree $(0,4)$. Hence the shape of 
$\J$ does not agree with either of the forms in \Cref{Defining Ideal Column Case} or \Cref{Defining Ideal Row Case}, as there is a minimal generator with bidegree $(2,2)$.
\end{ex}

In light of \Cref{exampleRank2} and \cref{exampleRank3}, we can see that the results obtained in this article are not generally true in the absence of condition (ii) of \Cref{rank1setting}. However, there are still examples for a module $E$ satisfying $G_s$, but not $G_{s+1}$, where $\fitt_{s+e-1}(E)$ has $\p = (x_1,\ldots,x_s)$ as its unique minimal prime, as in \Cref{minFitt}, even if $\varphi$ has rank greater than $1$ modulo this ideal.

\begin{ex}\label{exampleRank2Still1MinPrime}
Consider the following $5 \times 4$ matrix with entries in $R=\mathbb{Q}[x_1,x_2,x_3,x_4]$:
   $$\varphi = \left(\!\begin{array}{cccc}
x_{2}&0&x_{2}&0\\
x_{2}&x_{1}&x_{4}&x_{2}\\
0&x_{1}&x_{2}&x_{3}\\
0&x_{2}&x_{3}&x_{1}\\
x_{1}&x_{2}&x_{1}&x_{4}
\end{array}\!\right)$$
and consider the ideal $I= I_{4}(\varphi)$. This ideal is perfect of grade two and satisfies $G_{2}$ but not $G_{3}$. The ideal $I$ satisfies conditions (i) and (iii) of \Cref{rank1setting}, and the ideal $\fitt_2(I)$ has $(x_1,x_2)$ as its only minimal prime. However, the rank of $\varphi$ is 2 modulo $(x_1,x_2)$.

Moreover, \Cref{LTlocus} and the proof of \Cref{Jasat} imply that the defining ideal of $\R(I)$ is $\J = \L:\p^\infty$ where $\L = ([T_1\ldots T_5]\cdot \varphi)$ and $\p=(x_1,x_2)$. Computations through \textit{Macaulay2} \cite{Macaulay2} show that actually $\J=\L:\p$ and that $\J$ is minimally generated by the equations of $\L$ and one fiber equation of bidegree $(0,2)$, which may be taken as the determinant of a $2\times 2$ submatrix of $B(\varphi)$. In particular, $\J$ has the shape of the ideal in \Cref{Defining Ideal Column Case} without $\varphi$ or $B(\varphi)$ having the form in (\ref{Column Section - phi and B(phi)}), after a change of coordinates.
\end{ex}

As noted, the condition that $\fitt_{s+e-1}(E)$ has a unique minimal prime allows the defining ideal $\J$ of $\R(E)$ to be written as a saturation. Moreover, since the defining ideal $\J$ in \Cref{exampleRank2Still1MinPrime} resembles the behavior observed in \Cref{Defining Ideal Column Case}, one might ask the following question.

\begin{quest}\label{questionUniquePrime}
Can one characterize when $\fitt_{s+e-1}(E)$ has a unique minimal prime if $E$ is a module satisfying all of the assumptions of \Cref{rank1setting} except for condition (ii)?
\end{quest}

An answer to this question could possibly allow for a description of $\J$ in a more general setting than the assumptions of \Cref{rank1setting}. Additionally, \Cref{exampleRank2} and \Cref{exampleRank3} lead to the following more general question.

\begin{quest}\label{questionFittingPrimes}
What conditions can be placed, to allow one to determine the minimal primes of the Fitting ideals of $E$, for a module $E$ satisfying all conditions of \Cref{rank1setting} except for the rank condition (ii)?
\end{quest}

In \cite{DRS18}, the notion of the \textit{chaos invariant} was introduced to relate the minimal primes of Fitting ideals and the defining equations of the Rees ring, without relying on an assumption like condition (ii) of \Cref{rank1setting}. This invariant was introduced in the setting where $R = k[x_1,x_2,x_3]$ and $I$ is a linearly presented perfect ideal of height $2$ satisfying $G_{2}$ but not $G_{3}$. Perhaps a generalization of the chaos invariant to $R = k[x_1,\ldots,x_d]$ and a linearly presented $R$-module $E$ satisfying $G_{s}$ but not $G_{s+1}$ could assist in answering \Cref{questionUniquePrime} and \Cref{questionFittingPrimes}.

In addition to condition (ii) of \Cref{rank1setting}, we also remark that condition (i) is restrictive as well. The assumption that the module $E$ has a presentation matrix $\varphi$ consisting of linear entries was crucial for the arguments presented here and, aside from \cite{BM16,CHW08,Costantini, KPU11}, this condition has seldom been weakened in the context of modules of projective dimension one. However, it is curious if the techniques presented here can be combined with the techniques presented in \cite{BM16}, to study the case when condition (i) is relaxed. 

\begin{quest}\label{ALP question}
Can the equations defining $\R(E)$ be determined for a module $E$ satisfying the assumptions of \Cref{rank1setting} except for condition (i)?
\end{quest}

In particular, as a natural extension of the work in \cite{BM16,Costantini}, one might consider ideals and modules of projective dimension one not satisfying $G_d$, that have \textit{almost} linear presentation; that is, their presentation matrix $\varphi$ consists of linear entries, except for one column of entries of a higher degree.

\begin{ex}\label{exampleAlmostLinearlyPresented}
    Consider the following $5 \times 4$ matrix with entries in $R=\mathbb{Q}[x_{1},x_{2},x_{3},x_{4}]$:
    $$\varphi = \left(\!\begin{array}{cccc}
x_{1}^{2}&x_{1}&x_{2}&0\\
0&0&x_{1}&x_{1}\\
x_{2}^{2}&x_{2}&x_{1}&0\\
0&x_{1}&x_{2}&x_{2}\\
x_{2}^{2}&x_{1}&x_{3}&x_{4}
\end{array}\!\right)$$
and let $I = I_{4}(\varphi)$. The ideal $I$ is perfect of height two and satisfies $G_{2}$ but not $G_{3}$. Modulo $(x_{1},x_{2})$, the matrix $\varphi$ has rank $1$, and moreover $\fitt_2(I)$ has a unique minimal prime, namely $(x_{1},x_{2})$. 

By \Cref{LTlocus} and by repeating the proof of \Cref{Jasat}, it follows that the defining ideal $\J$ of $\R(E)$ is $\L:\p^\infty$ where $\L = ([T_1\ldots T_5]\cdot \varphi)$ and $\p=(x_1,x_2)$. However, computations through \textit{Macaulay2} \cite{Macaulay2} show that actually $\J=\L:\p^2 \neq \L:\p$, differing from the behavior in \Cref{Defining Ideal Column Case} or \Cref{Defining Ideal Row Case}, but similar to the behavior in \cite[3.6]{BM16}, as the entries in the nonlinear column of $\varphi$ have degree $2$.
\end{ex}


\end{document}